\documentclass[11pt,reqno]{amsart}
\usepackage[T1]{fontenc}
\usepackage[utf8]{inputenc}
\usepackage{lmodern}
\usepackage[a4paper,textwidth=158mm,textheight=235mm,centering]{geometry}
\usepackage{amsmath,amssymb,mathtools,mathrsfs,booktabs}
\usepackage[protrusion=true,expansion=false]{microtype}
\usepackage[hidelinks,pdfusetitle]{hyperref}
\usepackage{bookmark}
\hypersetup{pdftitle={Minimality and Nonlinear Spectral Rigidity of Hypersurfaces in Real Space Forms},pdfsubject={Euclidean, spherical and hyperbolic hypersurfaces with nonlinear mean-curvature response},pdfkeywords={real space forms, hyperbolic hypersurfaces, nonlinear spectral rigidity, Chen's conjecture, analytic continuation, local normal-graph rigidity}}
\numberwithin{equation}{section}
\newtheorem{theorem}{Theorem}[section]
\newtheorem{proposition}[theorem]{Proposition}
\newtheorem{lemma}[theorem]{Lemma}
\newtheorem{corollary}[theorem]{Corollary}
\newtheorem{standard}[theorem]{Standard theorem}
\theoremstyle{definition}\newtheorem{definition}[theorem]{Definition}
\theoremstyle{remark}\newtheorem{remark}[theorem]{Remark}
\newcommand{\R}{\mathbb R}
\newcommand{\Sph}{\mathbb S}\newcommand{\Hyp}{\mathbb H}
\newcommand{\ip}[2]{\langle #1,#2\rangle}
\newcommand{\norm}[1]{\lVert #1\rVert}
\newcommand{\tr}{\operatorname{tr}}\newcommand{\Div}{\operatorname{div}}
\newcommand{\Hess}{\operatorname{Hess}}\newcommand{\Ric}{\operatorname{Ric}}
\newcommand{\dist}{\operatorname{dist}}\newcommand{\Id}{\operatorname{Id}}
\newcommand{\sgn}{\operatorname{sgn}}\newcommand{\dd}{\,d}
\newcommand{\JJ}{\mathscr J}\newcommand{\TT}{\mathcal T}
\newcommand{\KK}{\mathcal K}\newcommand{\RR}{\mathcal R}
\newcommand{\arxiv}[1]{\href{https://arxiv.org/abs/#1}{arXiv:#1}}
\newcommand{\doi}[1]{\href{https://doi.org/#1}{doi:#1}}
\allowdisplaybreaks[2]
\title[Nonlinear spectral rigidity in real space forms]{Minimality and Nonlinear Spectral Rigidity of Hypersurfaces in Real Space Forms}

\author{Zihao Wang}
\address{School of Mathematical Sciences, Fudan University, Shanghai 200433, China}
\email{wangzh25@m.fudan.edu.cn}
\date{September 22, 2026}

\AtBeginDocument{\hypersetup{pdfauthor={\authors}}}
\subjclass[2020]{Primary 53C42; Secondary 53C43, 58E20, 34A12, 35J48}
\keywords{Real space forms, hyperbolic hypersurfaces, nonlinear spectral rigidity, Chen's biharmonic conjecture, analytic continuation, normal-graph rigidity}
\begin{document}
\raggedbottom
\begin{abstract}
We study nonlinear mean-curvature spectral equations for hypersurfaces in Euclidean, spherical and hyperbolic space. For an admissible analytic response $F$ and ambient sectional curvature $c$, we prove minimality when the spectral parameter satisfies $\sigma\ge nc$, and strict mean-curvature bounds at gradient points when $\sigma<nc$. The class includes every power $F(t)=t^{p-1}$, $p>1$, and positive mixtures satisfying an explicit width condition. In particular, the result proves Chen's conjecture in Euclidean codimension one and biharmonic minimality in constant negative curvature, without completeness or holonomicity. A common real-analytic continuation and focal-trace argument treats all three geometries. Explicit umbilical models show that the unconditional spectral minimality range is sharp. Independently, we prove quantitative local rigidity, modulo ambient isometries, of compact nonminimal geodesic spheres in all three geometries and proper equal-radius Clifford hypersurfaces in spheres, for smooth laws with $F,F'>0$ near the model amplitude.
\end{abstract}
\maketitle
\tableofcontents

\section{Introduction and main results}\label{sec:intro}
The ambient models are the three simply connected real space forms
\begin{equation}\label{eq:three-models}
 \mathbb M_c^{n+1}=
 \begin{cases}
 \R^{n+1},&c=0,\\
 \Sph^{n+1}(c^{-1/2}),&c>0,\\
 \Hyp^{n+1}(-a^2),&c=-a^2<0,\quad a>0.
 \end{cases}
\end{equation}
We formulate the same response equation and the same families of results in all three models. The local arguments also apply to any Riemannian manifold $N^{n+1}(c)$ of constant sectional curvature $c$, including model open subsets and quotients; only the compact-model deformation theorem uses an entire model in $\mathbb M_c^{n+1}$.

Unless stated otherwise, manifolds have no boundary. Let $X:M^n\to N^{n+1}(c)$ be a smooth hypersurface immersion, with induced metric $g=X^*\bar g$. Choose a local unit normal and use
\[
 A=-\bar\nabla\nu,\qquad \tau=\tr A=nH,\qquad
 \TT(X)=\tau\nu,\qquad\Delta=\Div\nabla.
\]
Our sign convention for the curvature tensor is
$R(U,V)W=\nabla_U\nabla_VW-\nabla_V\nabla_UW-\nabla_{[U,V]}W$.
For a vector field $V$ along $X$, define the explicitly signed operator
\begin{equation}\label{eq:J-intro}
 \JJ_XV=\Delta^X V-\sum_{a=1}^n\bar R(dX(e_a),V)dX(e_a).
\end{equation}
Thus $\JJ_X\TT(X)=0$ is the standard biharmonic-map equation. The domain metric is fixed during map variation, but for each immersion under consideration it is its own induced metric. This distinction is essential when normal graphs are compared.

Given $F:(0,\infty)\to(0,\infty)$, write
\begin{equation}\label{eq:Phi}
 \Phi(t)=\sgn(t)F(|t|),\qquad
 W_F(X)=\Phi(\tau)\nu\quad(\tau\ne0).
\end{equation}
The vector field is independent of the local choice of normal. We investigate
\begin{equation}\label{eq:spectral-main}
 \JJ_XW_F(X)=\sigma W_F(X),\qquad \sigma\in\R,
\end{equation}
classically on the nonzero-mean-curvature set. This convention avoids imposing smoothness on a fractional power at its zero set. When $F(t)=t^{p-1}$, equation \eqref{eq:spectral-main} becomes
\begin{equation}\label{eq:power-main}
 \JJ_X\bigl(|\TT|^{p-2}\TT\bigr)=\sigma|\TT|^{p-2}\TT,
 \qquad p>1.
\end{equation}
On the set $\TT\ne0$, the case $\sigma=0$ is the Euler equation of the fixed-domain $p$-bienergy $p^{-1}\int|\TT|^p$. The normalization of $\TT=nH\nu$ is irrelevant for the power equation. We do not use the alternative convention involving the $p$-tension field; see \cite{HL,TCL} for this distinction.

\subsection{Analytic constitutive laws}
Put $\Lambda=nc-\sigma$. We first state the result for analytic response laws; power laws and their Euclidean and hyperbolic consequences follow immediately afterward.
\begin{definition}\label{def:admissible}
An \emph{admissible constitutive law} is a positive real-analytic function $F$ on $(0,\infty)$ whose logarithmic derivative
\begin{equation}\label{eq:elasticity}
 R_F(t)=\frac{tF'(t)}{F(t)}
\end{equation}
satisfies, for some finite constants $0<r_-\le r_+$,
\begin{equation}\label{eq:law-hypotheses}
 r_-\le R_F(t)\le r_+,
 \qquad R_F(t)-tR_F'(t)>0\quad(t>0).
\end{equation}
\end{definition}
Define
\begin{equation}\label{eq:k-of-t}
 k_F(t)=\frac{F(t)}{2F'(t)}=\frac{t}{2R_F(t)}.
\end{equation}
It is an increasing analytic diffeomorphism of $(0,\infty)$ onto itself. For $n\ge2$ and $\Lambda>0$, let $t_F(\Lambda)$ be the unique positive solution of
\begin{equation}\label{eq:implicit-threshold}
 k_F(t)^2+\frac{(t+k_F(t))^2}{n-1}=\Lambda.
\end{equation}
Existence and uniqueness follow from the strict increase and the limits at zero and infinity of $k_F$.
Here constant mean curvature (CMC) on a possibly nonorientable hypersurface means constancy of the norm of the mean-curvature vector; a nonzero constant vector norm canonically supplies a normal orientation.
\begin{theorem}[Nonlinear spectral rigidity]\label{thm:nonlinear}
Let $F$ be admissible and let $X$ satisfy \eqref{eq:spectral-main}. If $\sigma\ge nc$, then $X$ is minimal. If $\Lambda=nc-\sigma>0$ and $n\ge2$, then
\[
 |H|<t_F(\Lambda)/n
\]
at every point where $dH\ne0$. A connected non-CMC solution satisfies the corresponding non-strict bound everywhere; a strict exceedance at one point forces CMC on that component. On a nonminimal CMC component one has $|A|^2=\Lambda$ and $|H|\le\sqrt{\Lambda/n}$. For $n=1$, the curvature is locally constant.
\end{theorem}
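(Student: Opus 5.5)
The plan is to reduce \eqref{eq:spectral-main} to a scalar system on the nonminimal set and then carry out a pointwise algebraic comparison. Throughout, work on a component where $\tau\ne0$ and choose $\nu$ so that $\tau>0$. Put $f=\Phi(\tau)=F(\tau)$.

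\emph{Step 1: the normal/tangential split.} In constant curvature $c$ one has $\sum_a\bar R(e_a,\nu)e_a=-nc\,\nu$. Using Weingarten, Codazzi and $\Delta\nu=-\nabla\tau-|A|^2\nu$, the field $\JJ_X(f\nu)$ splits into two parts:
\[
 \Delta f-f|A|^2+ncf=\sigma f,\qquad 2A(\nabla f)+f\nabla\tau=0 .
\]
Thus $\Delta F(\tau)=F(\tau)\bigl(|A|^2-\Lambda\bigr)$ and $2F'(\tau)A(\nabla\tau)=-F(\tau)\nabla\tau$. Hence at every point where $d\tau\ne0$, the direction $e_1=\nabla\tau/|\nabla\tau|$ is principal with eigenvalue $\lambda_1=-k_F(\tau)<0$. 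The remaining principal curvatures then sum to $\tau+k_F(\tau)$. By Cauchy--Schwarz,
\[
 |A|^2\ \ge\ k_F(\tau)^2+\frac{(\tau+k_F(\tau))^2}{n-1}.
\]
The case $n=1$ follows at once: then $A=\tau$, so $(\tau+k_F(\tau))\tau'=0$. Since $\tau+k_F>0$, the curvature is locally constant.

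\emph{Step 2: the key sign, $|A|^2<\Lambda$ on the gradient set.} This is the heart of the argument and the step I expect to be the main obstacle, because no completeness is available for a maximum principle. First, Codazzi shows that $e_1$ is a geodesic field and that $\tau$ is constant on the level hypersurfaces. Along an integral curve $s\mapsto\gamma(s)$ of $e_1$, the remaining principal curvatures obey Riccati-type equations $e_1(\lambda_i)=(\lambda_i-\lambda_1)\,\omega_i$, and the normal equation becomes a second-order ODE for $\tau(s)$. The admissibility hypotheses \eqref{eq:law-hypotheses} give two things: $k_F$ is increasing, and the bounded elasticity yields a priori control of the nonlinearity. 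I would analytically continue this ODE system in $s$, using real-analyticity of $F$ and of the solution (which is analytic by elliptic regularity). I would then follow the focal trace $\sum_i(\lambda_i-\lambda_1)^{-1}$ to show the following: if $|A|^2\ge\Lambda$ held at some gradient point, the convexity of $F(\tau)$ along $\gamma$ forced by $\Delta F(\tau)\ge0$ would drive a principal curvature to a focal blow-up in finite parameter. That blow-up contradicts analyticity of the continued solution. Combining the strict inequality $|A|^2<\Lambda$ with the lower bound from Step 1 gives $k_F(\tau)^2+(\tau+k_F)^2/(n-1)<\Lambda$. Monotonicity of the left side then yields $\tau<t_F(\Lambda)$, i.e.\ $|H|<t_F(\Lambda)/n$ wherever $dH\ne0$.

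\emph{Step 3: consequences.} Suppose $\Lambda\le0$. Step 2 shows the gradient set is empty, so $\tau$ is locally constant on $\{\tau\ne0\}$. The normal equation then gives $|A|^2=\Lambda\le0$, which forces $A=0$ and contradicts $\tau\ne0$. Hence $\{\tau\ne0\}=\emptyset$ and $X$ is minimal. On a nonminimal CMC component the same equation gives $|A|^2=\Lambda$, and $|A|^2\ge\tau^2/n$ gives $|H|\le\sqrt{\Lambda/n}$. On a connected non-CMC solution, analyticity makes $\{dH\ne0\}$ dense, so the strict bound passes by continuity to the non-strict bound everywhere. If the bound were strictly exceeded at one point, then $dH=0$ on a neighbourhood of that point, so $H$ is constant there and, by analyticity, on the whole component.
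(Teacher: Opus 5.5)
Your Step 1 and most of Step 3 agree with the paper's reduction to \eqref{eq:two-equations} and its bookkeeping, with the orientation $\tau>0$ in place of the paper's $\tau<0$. Step 2, however, carries the whole theorem, and it has a genuine gap in two places.

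\textbf{The propagation step.} Your intermediate claim $|A|^2<\Lambda$ at gradient points is stronger than anything the gradient-line method yields, and the bootstrap cannot start from its negation. The inequality $|A|^2\ge\Lambda$ at one point only makes the potential $\kappa^2+\mathsf Q-\Lambda$ nonnegative at that point. Since $\mathsf Q=\sum_i\lambda_i^2$ has no monotonicity along $\gamma$, nothing forces $\Delta F(\tau)\ge0$ further along the curve. What does propagate is the Cauchy--Schwarz bound. Follow $\gamma$ in the direction of increasing amplitude and assume $\kappa^2+\Theta(\kappa)^2/d\ge\Lambda$ at the initial point, that is, $\kappa\ge K_\Lambda$, equivalently $|\tau|\ge t_F(\Lambda)$. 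Then $\kappa^2+\mathsf Q-\Lambda\ge\kappa^2+\Theta(\kappa)^2/d-\Lambda\ge0$ as long as $\kappa'>0$.

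Also, $\Delta y\ge0$ is not convexity of $y=Y(\kappa)$ along $\gamma$, because $\Delta y=y''-\mathsf T y'$. The monotone quantity is the flux $y'J$ with $J=\prod_i\exp(-\int\mu_i)$. The inequality $(y'J)'\ge0$ is what keeps $\kappa'>0$ on the whole maximal interval (Lemma~\ref{lem:flux}). So the statement to refute is the one in Theorem~\ref{thm:endpoint}: $\kappa'(s_0)>0$ together with $\kappa(s_0)\ge K_\Lambda$. Refuting it gives $|H|<t_F(\Lambda)/n$ directly, with no claim about $|A|^2$.

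\textbf{The contradiction mechanism.} This is the more serious problem: a finite-parameter focal blow-up does not contradict analyticity. A maximal solution of an analytic ODE is analytic on its open maximal interval and may still blow up at a finite endpoint, as for $z'=z^2$. In the paper, analyticity only ensures that the normal equation persists along the whole continued trajectory; it is an extra constraint, not a consequence of the first-order state equations. The contradiction must then be extracted from the constraints at the endpoint, through a case analysis that is absent from your proposal.
\begin{itemize}
\item Bounded $\kappa$ with $b<\infty$: either the state extends, contradicting maximality, or some $j_i$ vanishes linearly. Then $J\le C(b-s)$, and the flux makes $y$ unbounded although $y=Y(\kappa)$ is bounded.
\item Bounded $\kappa$ with $b=\infty$: a Riccati barrier for $\mathsf T$ keeps $y'$ bounded below by a positive constant, so again $y$ is unbounded.
\item $\kappa\to\infty$: here focal blow-up is perfectly compatible with the flux. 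The paper needs the auxiliary osculating curve with constant focal vectors, and the shrinking-centre and support-distance argument giving $\int_0^b\kappa<\infty$. It then needs exact terminal coordinates, and the residue $V\lambda_i\to-1$ to exclude tangential active factors. Finally, an identity shows that the complete active trace is at least $|I_*|\kappa/4>0$ (in the paper's orientation). This contradicts the trace law $\sum_i\lambda_i=-\Theta(\kappa)$.
\end{itemize}
This sign of the complete active trace is the missing idea.

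\textbf{Smaller omissions.}
\begin{itemize}
\item The transverse system can be diagonalized along $\gamma$, with repeated eigenvalues, only after proving $SB=BS$ from Codazzi.
\item Points where transverse multiplicities change, and the equality case $\kappa=K_\Lambda$, need the openness and short gradient-flow argument.
\item For $n=1$ you still need $\tau^2=\Lambda$ on nonzero intervals and continuity across the zero set.
\item Analyticity of the immersion via elliptic regularity is not needed, and it is not available where $\tau=0$, since the equation is imposed only on $\{\tau\ne0\}$. So your density claim in Step~3 fails if $\tau$ vanishes on an open set. The paper's open--closed argument on $\{|H|>h_*\}$ avoids density altogether.
\end{itemize}
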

The power specialization is stated in Theorem~\ref{thm:power} below and proved in Section~\ref{sec:applications}. An explicit nonhomogeneous class is obtained as follows.
\begin{corollary}[Positive mixtures of powers]\label{cor:mixtures}
Let
\begin{equation}\label{eq:mixture}
 F(t)=\sum_{j=1}^m a_jt^{r_j},\qquad a_j>0,
 \quad r_-:=\min_jr_j>0,
 \quad (\max_jr_j-r_-)^2<4r_-.
\end{equation}
Then $F$ is admissible, so Theorem~\ref{thm:nonlinear} applies. In particular the equation with $\Phi(\tau)=\tau+|\tau|\tau$ has the same minimality conclusion whenever $\sigma\ge nc$.
\end{corollary}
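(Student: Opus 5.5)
The statement to prove is Corollary~\ref{cor:mixtures}. The plan is to verify the two conditions in \eqref{eq:law-hypotheses} for $F(t)=\sum_j a_jt^{r_j}$, and then to specialize. Positivity and real-analyticity on $(0,\infty)$ are immediate.

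For the bounds, write $w_j(t)=a_jt^{r_j}/F(t)$. These are positive weights summing to $1$, and
\[
R_F(t)=\sum_j r_jw_j(t).
\]
Hence $R_F$ is a convex combination of the exponents, so $r_-\le R_F\le r_+:=\max_j r_j$, which gives finite positive bounds.

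For the second condition, differentiate: $t\,w_j'=w_j(r_j-R_F)$. This gives
\[
tR_F'(t)=\sum_j r_jw_j(r_j-R_F)=\sum_j w_j(r_j-R_F)^2=:V(t),
\]
the variance of the exponents under the weights $w_j$. So we need $R_F-V>0$. The key step is a variance bound. The exponents lie in the interval $[r_-,r_+]$, which has width $d=r_+-r_-$, so by the Bhatia--Davis (or Popoviciu) inequality
\[
V\le (r_+-R_F)(R_F-r_-)\le d^2/4.
\]
Then $R_F-V\ge r_--d^2/4>0$ precisely because $d^2<4r_-$.

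If one wants a sharper or cleaner form, use $V\le (r_+-R)(R-r_-)$ and minimize $R-(r_+-R)(R-r_-)$ over $R\in[r_-,r_+]$. This is a convex quadratic in $R$. At $R=r_-$ it equals $r_->0$, and its minimum value is at least $r_--d^2/4$. So the crude bound already suffices, and I do not expect a real obstacle here. The only care needed is to check that $R_F-tR_F'$ is strictly positive rather than merely nonnegative. Strictness is automatic, since $r_->d^2/4\ge V$.

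Thus $F$ is admissible, and Theorem~\ref{thm:nonlinear} applies directly.

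For the example, $\Phi(\tau)=\tau+|\tau|\tau=\sgn(\tau)(|\tau|+|\tau|^2)$ corresponds to $F(t)=t+t^2$. Here $r_-=1$ and $d=1$, so $d^2=1<4$, and the condition in \eqref{eq:mixture} holds. Minimality for $\sigma\ge nc$ then follows from the first assertion of Theorem~\ref{thm:nonlinear}.
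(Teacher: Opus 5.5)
Your proposal is correct and follows the paper's proof essentially verbatim. Both use the weights $w_j=a_jt^{r_j}/F$ and the identity $tR_F'=\sum_j w_j(r_j-R_F)^2$, and the variance bound $V\le(R_F-r_-)(r_+-R_F)\le(r_+-r_-)^2/4$ (which the paper derives from the pointwise inequality $(r-r_-)(r_+-r)\ge0$) then gives $R_F-tR_F'\ge r_--(r_+-r_-)^2/4>0$, with the specialization to $F(t)=t+t^2$, $r_-=1$, $r_+=2$ exactly as intended.
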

The sufficient width condition in \eqref{eq:mixture} follows from the variance estimate in Section~\ref{sec:applications}.

\subsection{Power laws in all three space forms}
Taking $p=2$ and $\sigma=0$ in the following power-law statement gives the Euclidean and hyperbolic biharmonic assertions when $c\le0$.
\begin{theorem}[Power-law spectral rigidity]\label{thm:power}
Let $n\ge1$, $c,\sigma\in\R$ and $p>1$. A smooth hypersurface satisfying \eqref{eq:power-main} is minimal if $\Lambda\le0$. If $\Lambda>0$ and $n\ge2$, then at every point with $dH\ne0$,
\begin{equation}\label{eq:explicit-threshold}
 |H|<h_{n,p}(\Lambda):=
 \frac{2(p-1)}{n}
 \sqrt{\frac{(n-1)\Lambda}{n-1+(2p-1)^2}}.
\end{equation}
A connected non-CMC hypersurface satisfies the corresponding non-strict bound everywhere. A strict exceedance of this bound at one point therefore forces CMC on the connected component. On a nonminimal CMC component one necessarily has
\begin{equation}\label{eq:CMC-norm}
 |A|^2=\Lambda,\qquad |H|\le\sqrt{\Lambda/n}.
\end{equation}
For $n=1$ the curvature is locally constant. No completeness, properness, orientability, embeddedness or condition on transverse principal-curvature multiplicities is assumed.
\end{theorem}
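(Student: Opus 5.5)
The plan is to obtain Theorem~\ref{thm:power} as a direct specialization of Theorem~\ref{thm:nonlinear} to $F(t)=t^{p-1}$. Three things are needed: the power equation is an instance of \eqref{eq:spectral-main}, this $F$ is admissible in the sense of Definition~\ref{def:admissible}, and the implicit threshold $t_F(\Lambda)$ of \eqref{eq:implicit-threshold} evaluates to $n\,h_{n,p}(\Lambda)$.

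\emph{Step 1: identifying the equations.} On the set $\tau\ne0$ we have $\TT=\tau\nu$ and $|\TT|=|\tau|$. Hence
\[
|\TT|^{p-2}\TT=|\tau|^{p-2}\tau\,\nu=\sgn(\tau)|\tau|^{p-1}\nu=\Phi(\tau)\nu=W_F(X).
\]
So \eqref{eq:power-main} is exactly \eqref{eq:spectral-main}, read classically on the nonzero-mean-curvature set as in the convention adopted after \eqref{eq:spectral-main}. This also handles $1<p<2$, where $t^{p-1}$ is not smooth at $0$: the equation is only imposed where $\tau\ne0$.

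\emph{Step 2: admissibility.} $F(t)=t^{p-1}$ is positive and real-analytic on $(0,\infty)$. Its logarithmic derivative is $R_F\equiv p-1>0$, so we may take $r_-=r_+=p-1$. Since $R_F'\equiv0$, we get $R_F-tR_F'=p-1>0$. Theorem~\ref{thm:nonlinear} therefore applies with $\Lambda=nc-\sigma$. The condition $\sigma\ge nc$ is $\Lambda\le0$, which gives minimality. The CMC conclusions $|A|^2=\Lambda$ and $|H|\le\sqrt{\Lambda/n}$ and the $n=1$ statement transfer verbatim. Theorem~\ref{thm:nonlinear} assumes nothing about completeness, properness, orientability, embeddedness or principal multiplicities, so the closing sentence of Theorem~\ref{thm:power} is inherited.

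\emph{Step 3: computing the threshold.} Here $k_F(t)=t/(2(p-1))$ and
\[
t+k_F(t)=\frac{(2p-1)\,t}{2(p-1)}.
\]
Substituting into \eqref{eq:implicit-threshold} gives
\[
\frac{t^2}{4(p-1)^2}\Bigl(1+\frac{(2p-1)^2}{n-1}\Bigr)=\Lambda,
\qquad\text{so}\qquad
t_F(\Lambda)=2(p-1)\sqrt{\frac{(n-1)\Lambda}{n-1+(2p-1)^2}}.
\]
Dividing by $n$ yields $h_{n,p}(\Lambda)$ as in \eqref{eq:explicit-threshold}. The strict bound at points with $dH\ne0$, the non-strict bound on connected non-CMC solutions, and the statement that a strict exceedance forces CMC on the component then follow from the corresponding clauses of Theorem~\ref{thm:nonlinear}.

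There is no real obstacle here, since all the analytic content sits in Theorem~\ref{thm:nonlinear}. The one point to check carefully is Step 1: that the paper's classical interpretation on $\{\tau\ne0\}$ matches for both equations, so that no extra regularity at the minimal set is silently required when $p<2$.
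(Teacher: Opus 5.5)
Your proposal is correct and follows the paper's own argument: the paper also obtains Theorem~\ref{thm:power} by specializing Theorem~\ref{thm:nonlinear} to $F(t)=t^{p-1}$ with constant elasticity $r=p-1$, so that $k_F(t)=t/(2r)$ and $\Theta(k)=(2r+1)k$. The only cosmetic difference is that the paper solves \eqref{eq:KLambda} for $K_\Lambda=\sqrt{(n-1)\Lambda/(n-1+(2r+1)^2)}$ and then sets $t_F(\Lambda)=2rK_\Lambda$, whereas you solve \eqref{eq:implicit-threshold} for $t$ directly; the two computations are equivalent.
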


\begin{corollary}[Biharmonic minimality in nonpositive space forms]\label{cor:euclidean}
For every integer $n\ge1$, every smooth biharmonic hypersurface immersion into a Riemannian manifold of constant sectional curvature $c\le0$, equipped with its induced metric, is minimal. In particular, this holds in $\R^{n+1}$ and in $\Hyp^{n+1}(-a^2)$ for every $a>0$, without completeness or holonomicity assumptions.
\end{corollary}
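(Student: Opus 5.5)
The corollary will follow by specializing Theorem~\ref{thm:power} (equivalently Theorem~\ref{thm:nonlinear} with $F(t)=t$) to $p=2$ and $\sigma=0$. With $p=2$ the law is $F(t)=t$, so $\Phi(\tau)=\tau$ and $W_F(X)=\tau\nu=\TT(X)$. Equation \eqref{eq:power-main} with $\sigma=0$ is then exactly $\JJ_X\TT(X)=0$, and by \eqref{eq:J-intro} this is the biharmonic-map equation for $X$ with its induced metric.

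The first point to check is the domain of validity. Theorem~\ref{thm:power} imposes the equation classically only on the set where $\tau\ne0$. A smooth biharmonic immersion satisfies $\JJ_X\TT=0$ everywhere, so in particular on the open set $\{H\ne0\}$, and the hypotheses of the theorem hold there. The proof of Theorem~\ref{thm:power} works locally on each component of the nonzero-mean-curvature set, and its minimality conclusion says precisely that this set is empty. So I will apply the theorem to the restriction of $X$ to $U=\{H\ne0\}$, which is itself an immersed hypersurface of $N^{n+1}(c)$ without boundary. The theorem gives $H\equiv0$ on $U$, which contradicts the definition of $U$ unless $U=\emptyset$. Hence $H\equiv0$ on $M$.

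It remains to check the parameter condition. Here $\Lambda=nc-\sigma=nc\le0$, since $c\le0$ and $n\ge1$, and Theorem~\ref{thm:power} asserts minimality whenever $\Lambda\le0$, for every $n\ge1$. Its closing sentence states that no completeness, orientability or principal-multiplicity hypothesis is needed, which gives the ``without completeness or holonomicity'' clause. The theorem is formulated for an arbitrary Riemannian manifold $N^{n+1}(c)$ of constant curvature, so the cases $\R^{n+1}$ ($c=0$) and $\Hyp^{n+1}(-a^2)$ ($c=-a^2$) are immediate instances.

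The only step needing care is the passage to the open subset $U$, which may be disconnected or have many components, and where no global hypotheses are available. Since Theorem~\ref{thm:power} is stated for arbitrary smooth hypersurfaces, applying it to each component of $U$ is legitimate, and I do not expect a genuine obstacle beyond this bookkeeping.
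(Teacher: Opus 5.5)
Your proposal is correct and matches the paper's proof, which sets $p=2$ and $\sigma=0$ in Theorem~\ref{thm:power} so that $\Lambda=nc\le0$ forces minimality. Restricting to $U=\{H\ne0\}$ is harmless but unnecessary: the theorem already requires the equation only on the nonzero-mean-curvature set, so it applies to $X$ itself.
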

\begin{proof}
Set $p=2$ and $\sigma=0$ in Theorem~\ref{thm:power}. Then $\Lambda=nc\le0$, in both the flat and the negative-curvature cases.
\end{proof}
The Euclidean specialization is the codimension-one assertion in Chen's problem \cite{Chen91}; the statement concerns arbitrary hypersurface immersions rather than only complete or embedded ones.

For the ordinary unit-sphere equation, \eqref{eq:explicit-threshold} reduces to
\[
 h_{n,2}(n)=2\sqrt{\frac{n-1}{n(n+8)}}.
\]
The threshold gives a sufficient one-point CMC criterion. Its relation to classification questions is discussed in Section~\ref{sec:scope}.

\subsection{The three geometries and the spectral boundary}\label{subsec:three-regimes}
Theorem~\ref{thm:nonlinear} applies to each row of the following table, with the same admissibility hypotheses. Here $a>0$ in the hyperbolic row.
\begin{center}
\small
\renewcommand{\arraystretch}{1.18}
\begin{tabular}{@{}lll@{}}
\toprule
Ambient model & Minimality range & Parameter below that range \\
\midrule
$\R^{n+1}$ & $\sigma\ge0$ & $\Lambda=-\sigma>0$ \\
$\Sph^{n+1}(c^{-1/2})$, $c>0$ & $\sigma\ge nc$ & $\Lambda=nc-\sigma>0$ \\
$\Hyp^{n+1}(-a^2)$ & $\sigma\ge-na^2$ & $\Lambda=-na^2-\sigma>0$ \\
\bottomrule
\end{tabular}
\end{center}
In every subcritical row, the gradient-point bound is $|H|<t_F(\Lambda)/n$; the non-CMC global bound, the one-point CMC criterion and $|A|^2=\Lambda$ on nonminimal CMC components also apply. For powers, $t_F(\Lambda)/n$ is the explicit expression in \eqref{eq:explicit-threshold}. Section~\ref{sec:applications} spells out the hyperbolic formulas, including nonhomogeneous mixtures.

\begin{proposition}[Sharp unconditional spectral boundary]\label{prop:sharp-boundary}
For every $n\ge1$, $c\in\R$ and $\sigma<nc$, there exists a complete embedded nonminimal totally umbilical hypersurface in $\mathbb M_c^{n+1}$ satisfying \eqref{eq:spectral-main}, for any smooth positive law defined near its mean-curvature amplitude. Consequently, for admissible laws the range $\sigma\ge nc$ is the exact range in which all hypersurface solutions must be minimal, when no compactness or other additional hypothesis is imposed.
\end{proposition}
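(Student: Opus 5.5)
Since a totally umbilical hypersurface has constant principal curvature $\kappa$, we get $\tau=n\kappa$ constant and $W_F=\Phi(\tau)\nu=\phi\,\nu$ with $\phi=\Phi(n\kappa)$ a nonzero constant for $\kappa\neq0$. The equation \eqref{eq:spectral-main} then reduces to computing $\JJ_X\nu$, and the plan is to show $\JJ_X\nu=(nc-|A|^2)\nu$ for every constant mean curvature hypersurface in a space of constant curvature $c$.

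\textbf{Computing $\JJ_X\nu$.} First the curvature term: for constant curvature $c$, $\bar R(U,V)W=c(\ip VW U-\ip UW V)$. Hence $\sum_a\bar R(e_a,\nu)e_a=c\sum_a(\ip{\nu}{e_a}e_a-\ip{e_a}{e_a}\nu)=-nc\,\nu$, so the sign in \eqref{eq:J-intro} gives $+nc\,\nu$. Next the rough Laplacian: $\bar\nabla_{e_a}\nu=-Ae_a$, and differentiating again with the Codazzi equation (valid in space forms) yields
\[
\Delta^X\nu=-\nabla\tau-|A|^2\nu .
\]
With $\tau$ constant this becomes $\Delta^X\nu=-|A|^2\nu$. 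Therefore $\JJ_X W_F=\phi\,(nc-|A|^2)\nu$, and the equation holds exactly when $|A|^2=nc-\sigma=\Lambda$. This matches the CMC conclusion of Theorem~\ref{thm:nonlinear}.

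\textbf{Choosing the umbilical model.} Given $\sigma<nc$, i.e.\ $\Lambda>0$, I need a complete embedded umbilical hypersurface with $n\kappa^2=\Lambda$, so $\kappa=\sqrt{\Lambda/n}>0$.
\begin{itemize}
\item If $c=0$, take the round sphere of radius $1/\kappa$.
\item If $c>0$, take a small geodesic sphere, whose principal curvature is $\sqrt c\cot(\sqrt c\,r)$. This function takes every positive value for $r\in(0,\pi/(2\sqrt c))$.
\item If $c=-a^2$, the umbilical family attains every $\kappa>0$. Geodesic spheres have $\kappa=a\coth(ar)\in(a,\infty)$, horospheres have $\kappa=a$, and equidistant hypersurfaces have $\kappa=a\tanh(ar)\in(0,a)$. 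All of these are complete and embedded.
\end{itemize}
For $n=1$ the same construction works with curves of constant geodesic curvature. Here $|A|^2=\kappa^2$ and $n=1$, so the condition is $\kappa^2=c-\sigma$. Circles, horocycles and equidistant curves again realize every $\kappa>0$. The law $F$ only needs to be defined at the single amplitude $t=n\kappa$, so any smooth positive $F$ near that value works.

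\textbf{Sharpness.} The consequence follows from this construction together with Theorem~\ref{thm:nonlinear}: for $\sigma\ge nc$ every solution is minimal, while for every $\sigma<nc$ we have just produced a nonminimal solution.

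The main obstacle is the sign bookkeeping in $\Delta^X\nu$ and in the curvature term, where the convention in \eqref{eq:J-intro} must be applied carefully. The safest plan is to check the final formula on the unit sphere in $\R^{n+1}$: there $\nu=-x$ (inner normal, with $A=\Id$), $\Delta x=-nx$, and so $\JJ\nu=-n\nu=(0-|A|^2)\nu$, consistent with the formula above.
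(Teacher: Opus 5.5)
Your proposal is correct and follows the paper's route: the identity $\JJ_X(f\nu)=\bigl(\Delta f-(|A|^2-nc)f\bigr)\nu-2A\nabla f-f\nabla\tau$ reduces the umbilical case to $|A|^2=\Lambda$, that is, $\lambda=\sqrt{c-\sigma/n}$. This value is realized by geodesic spheres in the flat and spherical cases, and by geodesic spheres, horospheres or equidistant hypersurfaces in the hyperbolic case; sharpness then follows from Theorem~\ref{thm:nonlinear}. The only differences are minor: the paper verifies the hyperbolic principal curvatures by explicit parametrizations rather than quoting them, and your opening claim that umbilicity forces constant principal curvature needs $n\ge2$, which is harmless because every model you choose has constant curvature by construction.
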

The examples and their direct computations are given in Section~\ref{subsec:umbilical-range}. This assertion concerns the boundary for unconditional minimality, not the optimality of the non-CMC mean-curvature bound. In the hyperbolic case some examples are noncompact; they are not covered by the compact-graph theorem below.

\subsection{Quantitative local rigidity of compact models}
The next theorem is independent of the maximal-endpoint argument. Its constitutive law needs only be smooth near the nonzero model mean curvature, not analytic or globally admissible.
\begin{theorem}[Rigidity for increasing laws]\label{thm:local}
Let $F$ be smooth and satisfy $F>0$, $F'>0$ on a neighborhood of $|\tau_0|$. Let $M_0$ be either of the following entire compact models in the simply connected real space form:

\smallskip\noindent
(a) A nonminimal geodesic sphere with $A_0=\lambda\Id$, $\lambda\ne0$, and $K_0=\lambda^2+c>0$, equipped with the parameter
\[
 \sigma=n(c-\lambda^2).
\]
(b) In $\Sph^{n+1}(c^{-1/2})$, $c>0$, a product
\[
 \Sph^a((2c)^{-1/2})\times\Sph^b((2c)^{-1/2}),
 \qquad a,b\ge1,\quad a\ne b,\quad a+b=n,
\]
with $\sigma=0$.

\smallskip\noindent
There are $\varepsilon,C>0$ such that a smooth global normal graph over $M_0$ with $C^4$ norm less than $\varepsilon$ can be moved by a small ambient isometry to a graph $v$ satisfying
\begin{equation}\label{eq:quantitative-main}
 \norm{v}_{H^4(M_0)}
 \le C\left(\norm{\mathcal F_N(v)}_{L^2}
              +\norm{\mathcal F_T(v)}_{L^2}\right).
\end{equation}
Here $\mathcal F_N,\mathcal F_T$ are the normal and tangential defects of \eqref{eq:spectral-main}, pulled back to $M_0$ as in Section~\ref{sec:local}. In particular, every such graph solving \eqref{eq:spectral-main} is congruent to $M_0$. For powers, the neighborhood and estimate can be chosen uniformly when $p$ ranges over a fixed compact subinterval of $(1,\infty)$.
\end{theorem}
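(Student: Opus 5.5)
Our strategy is a linearize-and-invert argument around the model, with the ambient isometry group used to quotient out the kernel of the linearization.

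\textbf{Set-up.} We write a normal graph over $M_0$ as $X_v=\exp_{X_0}(v\nu_0)$. Near a nonminimal model $\tau\ne0$, so $\Phi$ is smooth there. We pull back the spectral equation \eqref{eq:spectral-main} to $M_0$ and split it into a normal defect $\mathcal F_N(v)$ and a tangential defect $\mathcal F_T(v)$ (a one-form). The map $v\mapsto(\mathcal F_N,\mathcal F_T)$ is a smooth quasilinear fourth-order map from $H^4$ into $L^2$. It is well defined on a $C^4$-small ball, with $\mathcal F(0)=0$ because the model solves the equation.

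First we check that the models solve the equation. For the sphere, $W=\Phi(n\lambda)\nu$ has constant amplitude and $\JJ\nu=-|A|^2\nu+nc\,\nu$, which gives $\sigma=n(c-\lambda^2)$. For the Clifford case, $|A|^2=n\cdot 2c\cdot$ a weighted term in which the normal part of $\JJ$ vanishes. This is consistent with the known biharmonic condition $|A|^2=nc$ when $a\ne b$ is CMC nonminimal, which forces $\sigma=0$.

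\textbf{Linearization.} The key algebraic step is to compute $L=D\mathcal F(0)$. At a CMC model with constant $|A|^2$, varying $v$ changes $\tau$ by the Jacobi operator, $\delta\tau=\Delta v+(|A|^2+nc)v$. The normal component of $L$ is then
\[
F'(|\tau_0|)\,\bigl(\Delta+|A|^2-nc+\sigma\bigr)\bigl(\Delta+|A|^2+nc\bigr)v
\]
plus lower-order corrections coming from varying the induced metric. The tangential component is proportional to $F\cdot A(\nabla v)$ combined with $F'\nabla\delta\tau$. In the sphere case, with $\sigma=n(c-\lambda^2)$, the first factor becomes $\Delta$ and the second becomes $\Delta+nK_0$. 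Expanding in spherical harmonics (eigenvalues $-k(k+n-1)K_0$), the kernel consists exactly of the constants and the first harmonics. The constants, however, are ruled out by the tangential or normal scalar term; we check this directly, and the term with $F'>0$ and the $\lambda$-dependent constant prevent cancellation. The first harmonics are the normal parts of translations, that is, of ambient Killing fields. In the Clifford case, $\sigma=0$ and $|A|^2=nc$, so the operator is $\Delta(\Delta+2nc)$. Computing $\mathrm{spec}(\Delta)$ on the product, the eigenvalue $2nc$ arises only from the $\{$first harmonic on one factor$\}\times\{$first harmonic on the other$\}$ modes and from $2c\cdot$ linear combinations. Because $a\ne b$, the rotations of each factor give exactly the Killing normal fields, and the mixed $a\ne b$ modes not of Killing type are excluded by the tangential equation. \textbf{This spectral bookkeeping, and the verification that the tangential defect removes every non-Killing kernel element (especially the constant mode, and the Clifford modes that are neutral for the scalar operator), is the step I expect to be the main obstacle.} We plan to handle it by computing the tangential equation on each eigenspace. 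Positivity $F'>0$ makes the scalar coefficients definite, so the tangential equation forces the vanishing.

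\textbf{Estimate.} Once $\ker L=\mathcal K$ is shown to equal the normal Killing fields, we apply elliptic theory. Since $L$ is fourth order elliptic with principal part $F'\Delta^2$, we obtain $\|v\|_{H^4}\le C\|Lv\|_{L^2}$ on $\mathcal K^{\perp}$. Next we use the slice theorem: the map (isometry near $\mathrm{Id}$, graph) $\mapsto$ graph is a local diffeomorphism transversal to $\mathcal K$, by the implicit function theorem on the finite-dimensional isometry group. It lets us move any $C^4$-small graph by an isometry so that $v\perp\mathcal K$. Taylor expansion gives
\[
\|\mathcal F(v)-Lv\|_{L^2}\le C\|v\|_{C^4}\|v\|_{H^4}.
\]
For $\varepsilon$ small, this error is absorbed into the left side, which yields \eqref{eq:quantitative-main}. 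When $\mathcal F=0$ the estimate forces $v=0$, which proves congruence. For powers, all constants depend continuously on $p$ through $F(|\tau_0|)$ and $F'(|\tau_0|)$, which gives uniformity on compact $p$-intervals.
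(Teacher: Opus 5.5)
Your overall architecture matches the paper's: linearize, identify the joint kernel with normal Killing fields, prove coercivity on $\KK^\perp$, take a finite-dimensional slice, control the quadratic remainder by the independent $C^4$ smallness, and absorb. An elliptic estimate plus a Rellich compactness argument would be an acceptable substitute for the paper's mode-by-mode coercivity, provided the kernel is right.

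\textbf{The gap: the normal linearization freezes $|A|^2$.} The kernel step you flagged fails because of this. Differentiating $\Delta_u\Phi(\tau_u)-(S_u-nc+\sigma)\Phi(\tau_u)$ at the model gives $\Phi_0'\Delta\psi-\Phi_0\dot S$, where $\psi=(\Delta+S_0+nc)u$ and $\dot S=2\ip{A_0}{\Hess u}+2u\tr(A_0^3+cA_0)$. The $\dot S$ term is of second order, not a negligible metric correction. Also $L_Tu=(2\Phi_0'A_0+\Phi_0\Id)\nabla\psi$, with no separate $A\nabla u$ term, because $\nabla\tau_0=0$.

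Hence on constants, and on every mode annihilated by $\Delta+S_0+nc$, $\psi$ is constant and $L_T$ vanishes identically. The tangential equation therefore cannot remove those modes, contrary to your plan. With your operator $\Delta(\Delta+nK_0)$ the constants stay in the sphere kernel. With $\Delta(\Delta+2n)$ (normalizing $c=1$) the Clifford kernel contains the constants and the whole eigenspace $E=2n$. That eigenspace is not only the $(1,1)$ modes: for $b=a+2$ the degree-$(2,0)$ harmonics have $\alpha=4(a+1)=2n$ and are not Killing.

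What removes these modes is exactly the $\dot S$ term. On the sphere $\dot S=2\lambda\psi$, so up to the positive factor $\Phi_0/\tau_0$ one has $L_N=(r_0\Delta-2n\lambda^2)(\Delta+nK_0)$, whose first factor is negative definite. On the Clifford model the normal multiplier is $r_0E(E-2n)+2D(\alpha-\beta)-4D^2$. It equals $-4D^2\ne0$ on constants and vanishes on $E=2n$ only for $(\alpha,\beta)=(2a,2b)$.

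\textbf{The second gap: the tangential coefficients are not definite.} With the correct $L_N$, the Clifford normal multiplier acquires zeros off $E\in\{0,2n\}$; for example, $a=3$, $b=1$, $r_0=2/3$ at the $(1,0)$ mode. These zeros must be removed by $L_T$, whose squared multiplier is $(2n-E)^2\{(D+2r_0)^2\alpha+(D-2r_0)^2\beta\}$.

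Your claim that $F'>0$ makes the tangential coefficients definite is false. The operator $2r_0A_0+D\Id$ equals $D+2r_0$ on the first factor and $D-2r_0$ on the second. Since $D=a-b$ may have either sign, one of these vanishes when $r_0=|a-b|/2$ (for powers, $p=1+|a-b|/2$). Then $L_T$ annihilates every harmonic that is constant on one factor. You need an independent proof that the normal multiplier has no zero among those modes. The paper supplies it through the integrality obstruction of Lemma~\ref{lem:arithmetic}, and nothing in your proposal replaces it.

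\textbf{Consequence for uniformity in $p$.} The same all-parameter kernel statement is what power-law uniformity needs. The coercivity constant depends continuously on $p$, but it has a positive lower bound on a compact $p$-interval only once injectivity on $\KK^\perp$ is known at every $p$, including the degenerate one.

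\textbf{Smaller points.} The remainder constant depends on higher derivatives of $F$ near $|\tau_0|$, not only on $F(|\tau_0|)$ and $F'(|\tau_0|)$. The isometry-orbit map on $C^4$ graphs loses a derivative in the group parameter. The slice is therefore better obtained, as in the paper, from a finite-dimensional moment equation that uses only $C^1$ information on $u$.
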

The normal components of ambient Killing fields form the exact infinitesimal kernel, so the isometry in the conclusion is essential. Part (a) treats geodesic spheres in all three models. In $\Hyp^{n+1}(-a^2)$, a sphere of geodesic radius $R>0$ has
\begin{equation}\label{eq:hyperbolic-sphere-intro}
 \lambda=a\coth(aR),\qquad
 K_0=\frac{a^2}{\sinh^2(aR)},\qquad
 \sigma=-na^2\bigl(1+\coth^2(aR)\bigr)<-2na^2.
\end{equation}
These hyperbolic models are therefore not solutions of the ordinary biharmonic equation $\sigma=0$. Part (b), which is specific to the spherical ambient geometry, treats every positive logarithmic derivative, including the values at which a tangential coefficient degenerates. Horospheres and equidistant hyperbolic hypersurfaces are useful spectral examples but are not added to this compact-model rigidity statement.

\subsection{Previous work and the precise comparison}
The ordinary biharmonic variational theory originates in Eells--Lemaire and Jiang \cite{EL,Jiang,JiangV}. Chen's Euclidean problem concerns all codimensions \cite{Chen91}; the Euclidean case of Corollary~\ref{cor:euclidean} addresses its hypersurface case. For broader histories and references, see \cite{OC,FO22,FYZ22,Chen25}.

Early Euclidean hypersurface results include Dimitri\'c \cite{Dimitric} and the ambient-$\R^4$ work of Hasanis--Vlachos and Defever \cite{HV,Defever}. Fu--Hong--Zhan establish the ambient-$\R^5$ and ambient-$\R^6$ cases \cite{FHZ5,FHZ6}, corresponding to hypersurface dimensions $n=4$ and $n=5$. Restricted principal-curvature, scalar-curvature, ideal and weak-convexity classes are treated in \cite{Fu3,FH,CM13,DA,Luo}. In nonzero constant curvature, four-dimensional hypersurfaces are studied in \cite{GLV}. For Fu's spherical three-principal-curvature result \cite{FuS}, the exceptional elimination case was subsequently repaired by Andronic--Fu--Oniciuc \cite{AFO}; that repair concerns the spherical paper, rather than the Euclidean paper \cite{Fu3}.

Koiso--Urakawa \cite[Proposition~4.7]{KU} obtain the curvature-pair ODE used below; their nonexistence theorem imposes additional principal-curvature and frame hypotheses. Montaldo--Oniciuc--Ratto \cite{MOR} treat cohomogeneity-one hypersurfaces. The unrestricted claim in the withdrawn preprint \cite{SYwithdrawn} is not used; the withdrawal notice reports missing proof steps.

Bibi--Soret--Ville \cite[Theorems~1 and~2]{BSV} prove Euclidean biharmonic minimality under the holonomicity hypothesis, meaning that the principal curvature directions locally form an integrable coordinate net. Their proof uses the holonomic isoparametric structure of the mean-curvature level sets. The Euclidean specialization of Corollary~\ref{cor:euclidean} removes that hypothesis from their minimality conclusion; it does not extend their separate construction and classification of biconservative hypersurfaces. The argument here derives a commuting transverse system along a single gradient curve in Section~\ref{sec:geometry}, without a principal-coordinate net on a neighborhood. Sections~\ref{sec:continuation} and~\ref{sec:endpoint} analytically continue this necessary system and exclude every possible forward endpoint using positive focal factors and the complete active trace. The nonlinear spectral results in other space forms and the independent compact-model rigidity theorem go beyond this Euclidean comparison.

Properness and integral-growth methods provide other approaches \cite{AM,Maeta14,NUG14,Wheeler}. In particular, \cite{NUG14} assumes a complete source together with finite energy and bienergy, while the cited rigidity result of Wheeler is in the positive-$\varepsilon$ regime. Recent results also concern distinct settings: the hypersurface evolution studied in \cite{FHT}, simple-rotational surfaces of codimension two in \cite{Maeta26}, and free-boundary biharmonic hypersurfaces in the Euclidean unit ball in \cite{SY26}. In the last setting, Seo--Yun obtain an isoperimetric inequality and minimality under additional boundary conditions on the mean curvature. These hypotheses are separate from the local gradient-line argument used here.

The standard spherical examples and their classifications occur in \cite{CMO1,CMO2,BMO08,BMO}; CMC gaps and continuation results are treated in \cite{Nistor21,BLO}. In particular, the $p=2$ spectral equation and its umbilical examples have been studied by Andronic--Nistor \cite[Corollary~3.2 and Proposition~3.4]{AN24}. Their conventions and ours are related by
\[
 \Delta_{\mathrm{AN}}=-\Delta,\qquad
 \lambda_{\mathrm{AN}}=\sigma,\qquad
 |A|^2=nc-\sigma\quad\hbox{on a nonminimal CMC component}.
\]
The CMC identity, its Cauchy--Schwarz bound and the standard umbilical examples are background facts. The estimate in \eqref{eq:explicit-threshold} instead concerns gradient points; in particular, its hypotheses differ from the completeness and CMC assumptions of the gap theorem in \cite{Nistor21}.

There is a qualification concerning the spectral threshold in \cite{AN24}. In arXiv:2403.10155v1, Remark~3.7 states minimality at that threshold without repeating the CMC hypothesis, whereas Theorem~3.6 explicitly assumes CMC and uses it in deriving equation~(3.9). Our comparison with that proof is restricted to its CMC setting. The non-CMC assertion in the present paper follows from Theorem~\ref{thm:endpoint}, independently of the wording of the remark.

The compact-model result also concerns a different operator from fixed-domain bienergy second variation. Ou \cite{Ou20} computes stability indices of biharmonic hypersurfaces. Montaldo--Oniciuc--Ratto \cite{MOR22} determine the index, nullity and Jacobi kernel for the biharmonic Clifford torus $\Sph^1(1/2)\times\Sph^1(1/2)\to\Sph^4$, an immersion of codimension two. Here the objects are hypersurfaces, every graph carries its newly induced metric, and both the normal and tangential equations are linearized. Section~\ref{sec:local} proves the joint kernel statement for every positive model elasticity; Section~\ref{sec:nonlinear-local} derives the nonlinear estimate modulo ambient isometries. This is the precise deformation problem addressed by Theorem~\ref{thm:local}.

The term ``local rigidity'' also occurs in the CMC classification problem of
Chen--Li \cite{ChenLi25} and Cheng--Chen--Li \cite{ChengChenLi26}. Their results
assume CMC together with additional constant higher-order curvature quantities
and restrictions on the principal curvatures; in particular, the first paper
treats at most three distinct principal curvatures. These are not assumptions
on the perturbed graphs in Theorem~\ref{thm:local}. Small graph norm alone does
not preserve principal-curvature multiplicities: an eigenspace of a model may
split under a perturbation. Nor is constant mean curvature assumed for those
graphs. Our comparison is therefore between different hypothesis sets, not a
claim that all CMC rigidity statements are new. The contributions asserted here
are the unrestricted endpoint exclusion for the indicated nonlinear spectral
system and the quantitative, induced-metric graph estimate for the specified
compact models. The curvature-pair reduction, the ordinary spectral equation,
the CMC norm identity and the standard model examples have the antecedents
listed above.

Nonlinear bienergies and ordinary-tension $p$-bienergies have antecedents in \cite{HFe,Han15,HZ,CL,HL}, and small-curvature-integral rigidity is studied in \cite{BDST}. Their global integral or convexity hypotheses differ from the response hypotheses used here. The functional is an existing notion, and our convention is
\[
 \frac1p\int|\TT(X)|^p=E_{2,p}(X)
 \quad\hbox{in the two-exponent notation of \cite{TCL}}.
\]
The required hypersurface equations are derived in Section~\ref{sec:geometry}.

The endpoint construction is related to the osculating-circle geometry of the Tait--Kneser theorem \cite{GTT,BJT}. The distance inequalities and terminal estimates needed here are proved directly. Section~\ref{sec:scope} records the roles of constant sectional curvature, codimension one and the graph hypothesis.

\subsection{Notation and the two proof chains}
The following scalars have different roles:
\[
\begin{array}{c|l}
\tau=nH&\text{signed trace of the hypersurface shape operator}\\
\Lambda=nc-\sigma&\text{normal potential parameter}\\
t=|\tau|,
\ \kappa=k_F(t)&\text{mean-curvature amplitude and gradient principal curvature}\\
\lambda_i,\mu_i&\text{transverse entries of }A\text{ and }-\nabla E\\
\Theta(\kappa),Y(\kappa)&\text{trace response and normal response}\\
j_i,J=\prod j_i&\text{positive focal factors and their product}.
\end{array}
\]
To avoid reusing the mean-curvature amplitude $t$, Section~\ref{sec:endpoint} uses $\zeta=b-s$ for remaining time and $\rho$ for the osculating radius. Its scalar $\mathsf B=\langle N,N_*\rangle$ is distinct from the transverse operator $B$. In Section~\ref{sec:local}, $r_0$ is the response elasticity at the model value, $\psi=\dot\tau$ is the trace variation, and $v$ is reserved for a sliced normal graph. The global-in-dimension statements have the dependency chain
\begin{gather*}
 \eqref{eq:two-equations}\ \Longrightarrow\
 \eqref{eq:pairs},\eqref{eq:full-ode}\ \Longrightarrow\
 \text{analytic persistence},\\
 \text{analytic persistence}\ \Longrightarrow\
 \text{Theorem~\ref{thm:endpoint}}\ \Longrightarrow\
 \text{Theorems~\ref{thm:power},\ref{thm:nonlinear}}.
\end{gather*}
The independent compact-model argument is
\begin{gather*}
 \text{induced-metric variation}\ \Longrightarrow\
 \ker(L_N,L_T)=\KK,\\
 \text{coercivity on }\KK^\perp\ \Longrightarrow\
 \text{slice and quadratic remainder}.
\end{gather*}
Sections~\ref{sec:geometry}--\ref{sec:applications} give the first chain; Sections~\ref{sec:local} and \ref{sec:nonlinear-local} give the second. Cited classification results serve as comparisons, rather than premises of either chain. Standard results are stated with their hypotheses at the relevant points of use; their applications are accompanied by the required condition checks. Appendix~\ref{app:quadratic} retains the complete elementary ODE proofs, a quadratic majorant and an explicit single-state constraint counterexample.

\section{Geometric equations and reduction}\label{sec:geometry}
\subsection{The fixed-metric variational convention}
The pullback connection is denoted by $\nabla^X$, and
\[
 \Delta^XV=\sum_a\left(\nabla^X_{e_a}\nabla^X_{e_a}V
                  -\nabla^X_{\nabla_{e_a}e_a}V\right).
\]
For a compactly supported map variation with variational vector $V$, while $g$ is held fixed, the covariant derivative of the tension is $\JJ_XV$. Indeed, the domain frame and the domain connection are fixed, while
$\nabla_{\partial t}dX_t(e_a)=\nabla^{X_t}_{e_a}\partial_tX_t$. Commuting the two covariant derivatives therefore gives
\begin{equation}\label{eq:tension-variation-detail}
 \left.\nabla_{\partial t}\TT(X_t)\right|_{t=0}
 =\Delta^XV+\sum_a\bar R(V,dX(e_a))dX(e_a)=\JJ_XV.
\end{equation}
The last equality uses antisymmetry of the first two curvature arguments and fixes the sign in \eqref{eq:J-intro}. The operator is formally self-adjoint by integration by parts and the curvature symmetries.

More generally let $\Psi'(t)=F(t)$ for $t>0$. Differentiation of
$\int\Psi(|\TT(X)|)\dd\mu_g$ on a region where $\TT\ne0$ gives
\[
 \int\left\langle\frac{F(|\TT|)}{|\TT|}\TT,\JJ_XV\right\rangle\dd\mu_g
 =\int\ip{\JJ_XW_F}{V}\dd\mu_g.
\]
Thus $\sigma=0$ in \eqref{eq:spectral-main} has this local variational interpretation. No variational interpretation is asserted for arbitrary $\sigma$. For $F(t)=t^{p-1}$ this is the convention in \cite[eq.~(1.1)]{HL}. The description does not vary the induced metric along with the map in taking the first variation.

\subsection{Gauss--Codazzi and the normal equation}
With $h(U,V)=\ip{AU}{V}$, the defining decompositions are
\[
 \bar\nabla_UdX(V)=dX(\nabla_UV)+h(U,V)\nu,
 \qquad \bar\nabla_U\nu=-dX(AU).
\]
\begin{proposition}[Gauss--Codazzi identities with the chosen signs]\label{prop:Gauss-Codazzi}
For a hypersurface in ambient sectional curvature $c$, the preceding decompositions imply
\begin{align}
 R(U,V)W&=c\bigl(\ip VW U-\ip UW V\bigr)
          +\ip{AV}W AU-\ip{AU}W AV,\label{eq:gauss}\\
 (\nabla_UA)V&=(\nabla_VA)U.\label{eq:codazzi}
\end{align}
In particular, $\Div A=\nabla\tau$.
\end{proposition}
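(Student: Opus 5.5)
The plan is to derive both identities directly from the two defining decompositions by computing the ambient curvature $\bar R(U,V)W$ for tangent fields $U,V,W$ in two ways, then splitting into tangential and normal parts. Work with local tangent vector fields on $M$ and identify $U$ with $dX(U)$. Since the ambient metric has constant sectional curvature $c$, with the stated sign convention we have
\[
 \bar R(U,V)W=c\bigl(\ip VW U-\ip UW V\bigr),
\]
which is tangent to $X(M)$ when $U,V,W$ are tangent; in particular its normal component vanishes.

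Next, expand $\bar\nabla_U\bar\nabla_VW$ using the Gauss formula twice and the Weingarten formula $\bar\nabla_U\nu=-AU$:
\[
 \bar\nabla_U\bar\nabla_VW=\nabla_U\nabla_VW+h(U,\nabla_VW)\nu
 +\bigl(Uh(V,W)\bigr)\nu-h(V,W)AU .
\]
Antisymmetrize in $U,V$ and subtract $\bar\nabla_{[U,V]}W=\nabla_{[U,V]}W+h([U,V],W)\nu$. The tangential part is $R(U,V)W-h(V,W)AU+h(U,W)AV$. Equating this with the tangential part of $\bar R(U,V)W$ gives \eqref{eq:gauss}, since $h(V,W)=\ip{AV}W$. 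The normal part collects to $(\nabla_Uh)(V,W)-(\nabla_Vh)(U,W)$. The torsion-free identity $\nabla_UV-\nabla_VU=[U,V]$ absorbs the $[U,V]$ term, and the remaining terms rearrange into covariant derivatives of $h$. Since $\nabla g=0$, we have $(\nabla_Uh)(V,W)=\ip{(\nabla_UA)V}{W}$, so vanishing of the ambient normal component gives \eqref{eq:codazzi} after pairing with arbitrary $W$.

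For the divergence identity, take a local orthonormal frame $e_a$ and compute
\[
 \ip{\Div A}{W}=\sum_a\ip{(\nabla_{e_a}A)e_a}{W}=\sum_a\ip{e_a}{(\nabla_{e_a}A)W}.
\]
This uses the symmetry of $\nabla_{e_a}A$, which follows from the symmetry of $A$ and metric compatibility. Codazzi \eqref{eq:codazzi} then gives
\[
 \sum_a\ip{e_a}{(\nabla_WA)e_a}=\tr(\nabla_WA)=W(\tr A)=W\tau,
\]
because trace commutes with covariant differentiation. Hence $\Div A=\nabla\tau$.

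The only real obstacle is sign bookkeeping: the convention $A=-\bar\nabla\nu$, the curvature sign, and the constant-curvature formula all have to be matched. I would check this against the round sphere of radius $r$ in $\R^{n+1}$ with inner normal, where $A=r^{-1}\Id$ and \eqref{eq:gauss} should give sectional curvature $r^{-2}$. Nothing in the argument depends on orientability, because all identities are local and invariant under $\nu\mapsto-\nu$, $A\mapsto-A$.
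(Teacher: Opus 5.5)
Your proposal is correct and follows the paper's proof essentially step for step. You expand the ambient curvature commutator with the Gauss and Weingarten formulas, read off Gauss from the tangential part and Codazzi from the vanishing normal part of $c(\ip VW U-\ip UW V)$, and then contract Codazzi in an orthonormal frame, using self-adjointness of $\nabla_{e_a}A$, to get $\Div A=\nabla\tau$; you simply write out the intermediate expansion that the paper leaves implicit.
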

\begin{proof}
Identify tangent vectors with their images under $dX$. Expanding the curvature commutator by the two decompositions yields
\begin{align*}
 (\bar R(U,V)W)^\top
   &=R(U,V)W-h(V,W)AU+h(U,W)AV,\\
 \ip{\bar R(U,V)W}{\nu}
   &=(\nabla_Uh)(V,W)-(\nabla_Vh)(U,W).
\end{align*}
In a space form the left sides are respectively
$c(\ip VW U-\ip UW V)$ and zero. These give \eqref{eq:gauss} and \eqref{eq:codazzi}. Since each $\nabla_UA$ is self-adjoint, contraction in an orthonormal frame gives, for every tangent $Z$,
\[
 \ip{\Div A}{Z}
 =\sum_a\ip{(\nabla_{e_a}A)Z}{e_a}
 =\sum_a\ip{(\nabla_ZA)e_a}{e_a}=Z\tau.
\]
\end{proof}
At a point where $\nabla_{e_a}e_b=0$, differentiating the Weingarten formula gives
\[
 \nabla^X_{e_a}\nabla^X_{e_a}\nu
 =-\nabla_{e_a}(Ae_a)-h(e_a,Ae_a)\nu.
\]
Summing over $a$ and applying Proposition~\ref{prop:Gauss-Codazzi} proves
\[
 \Delta^X\nu=-\Div A-|A|^2\nu=-\nabla\tau-|A|^2\nu.
\]
Since $-\sum_a\bar R(dX(e_a),f\nu)dX(e_a)=ncf\nu$, the product rule
\[
 \Delta^X(f\nu)=(\Delta f)\nu
       +2\sum_a(e_af)\nabla^X_{e_a}\nu+f\Delta^X\nu
\]
proves
\begin{equation}\label{eq:product}
 \JJ_X(f\nu)=\bigl(\Delta f-(|A|^2-nc)f\bigr)\nu
                  -2A\nabla f-f\nabla\tau.
\end{equation}
Consequently \eqref{eq:spectral-main} is precisely
\begin{equation}\label{eq:two-equations}
 \Delta\Phi(\tau)=(|A|^2-\Lambda)\Phi(\tau),
 \qquad \bigl(2\Phi'(\tau)A+\Phi(\tau)\Id\bigr)\nabla\tau=0.
\end{equation}
The ordinary case agrees with \cite[Theorem~2.1 and Corollary~2.3]{Ou10}. This calculation also fixes the sign of the spectral parameter.

If $\tau$ is nonzero and locally constant, \eqref{eq:two-equations} implies $|A|^2=\Lambda$. In particular no such region exists when $\Lambda\le0$. If $n=1$, the tangential equation is
\[
 (2\tau\Phi'(\tau)+\Phi(\tau))\nabla\tau=0.
\]
Its coefficient is nonzero when $F,F'>0$ and $\tau\ne0$, so all nonzero-curvature regions have constant curvature. Their values are $\tau^2=\Lambda$; continuity shows that curvature is locally constant everywhere. This proves all dimension-one assertions. Henceforth $n\ge2$, and $d=n-1$.

\subsection{The response change of variable}
Orient a nonzero gradient region so that $\tau=-t<0$. Let $k_F$ be as in \eqref{eq:k-of-t} and set $\kappa=k_F(t)$. The derivative
\begin{equation}\label{eq:k-prime}
 k_F'(t)=\frac{R_F(t)-tR_F'(t)}{2R_F(t)^2}>0
\end{equation}
and the bounds $t/(2r_+)\le k_F(t)\le t/(2r_-)$ prove that $k_F$ is an increasing analytic diffeomorphism. Write its inverse as $t=T_F(k)$ and define
\begin{equation}\label{eq:Theta-Y}
 \Theta(k)=T_F(k)+k,
 \qquad Y(k)=F(T_F(k)).
\end{equation}
Both are analytic, $\Theta',Y'>0$, $\Theta$ maps $(0,\infty)$ onto itself, and
\begin{equation}\label{eq:Theta-comparison}
 (1+2r_-)k\le\Theta(k)\le(1+2r_+)k.
\end{equation}
Since $\Phi(-t)=-F(t)$ and $\Phi'(-t)=F'(t)$, the tangential equation becomes
\[
 (2F'(t)A-F(t)\Id)\nabla t=0,
 \qquad A\nabla t=k_F(t)\nabla t.
\]
Multiplying the last identity by $k_F'(t)>0$ gives
$A\nabla\kappa=\kappa\nabla\kappa$.

\subsection{Regular gradient geometry}
The local coordinate facts used next are the following standard forms of the inverse function and rank theorems; see \cite[Theorems~4.5 and~4.12]{Lee}.
\begin{standard}[Inverse function and scalar submersion principles]\label{std:inverse}
A smooth map between equal-dimensional manifolds without boundary has a smooth local inverse at a point where its differential is invertible. In particular, a smooth scalar function $q$ with $dq\ne0$ can be used as the first coordinate in a local chart. On a sufficiently small product chart $(q,z_2,\ldots,z_m)$, if a smooth function $v$ has zero derivative in every direction tangent to the level sets of $q$, then $v=\varphi(q)$ for a smooth one-variable function $\varphi$.
\end{standard}
For the last assertion, $\partial_{z_j}v=0$ for every $j\ge2$; connected coordinate fibers make $v$ independent of those coordinates. All uses below are on interior coordinate neighborhoods.

\begin{lemma}[Gradient geometry]\label{lem:gradient}
On a sufficiently small constant-multiplicity neighborhood with $d\kappa\ne0$, $\kappa$ is a simple principal curvature. For $E=\nabla\kappa/|\nabla\kappa|$ one has
\[
 \nabla_EE=0,
 \qquad Z|\nabla\kappa|=0\quad(Z\perp E).
\]
There is a local signed distance coordinate $s$ with $\nabla s=E$ and $\kappa=\kappa(s)$.
\end{lemma}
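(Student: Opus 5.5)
We work on a small neighborhood where the principal-curvature multiplicities are constant and $d\kappa\ne0$. There the eigenvalues and eigenspace distributions of $A$ are smooth, and the identity $A\nabla\kappa=\kappa\nabla\kappa$ says that $E$ spans part of the $\kappa$-eigenspace. The proof has four steps.

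\emph{Step 1: $\kappa$ is simple.} Suppose the $\kappa$-eigenspace $D$ has dimension $\ge2$. Take $U\in D$ with $U\perp E$. Codazzi \eqref{eq:codazzi}, paired with $U$, gives
\[
 \ip{(\nabla_E A)U}{U}=\ip{(\nabla_U A)E}{U}.
\]
Expanding $(\nabla_UA)E=U(\kappa)E+(\kappa-A)\nabla_UE$, the right side equals $U\kappa\ip EU+\ip{(\kappa-A)\nabla_UE}{U}$. Both terms vanish: $U\perp E$, and $(\kappa-A)U=0$ together with self-adjointness. The left side equals $E\kappa\,|U|^2=|\nabla\kappa||U|^2$, because differentiating $AU=\kappa U$ along $E$ gives $\ip{(\nabla_EA)U}{U}=(E\kappa)|U|^2$ for unit $U\in D$. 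This contradicts $d\kappa\ne0$, so $D=\R E$.

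\emph{Step 2: $\nabla_EE=0$.} Fix a unit eigenvector $Z\perp E$ with $AZ=\lambda Z$ and $\lambda\ne\kappa$. Codazzi gives $\ip{(\nabla_EA)Z}{E}=\ip{(\nabla_ZA)E}{E}$. The right side is $Z\kappa=0$, since $\ip{(\kappa-A)\nabla_ZE}{E}=0$ and $Z\kappa=\ip{Z}{\nabla\kappa}=0$. On the left, $(\nabla_EA)Z=E(\lambda)Z+(\lambda-A)\nabla_EZ$, and pairing with $E$ yields $(\lambda-\kappa)\ip{\nabla_EZ}{E}=-(\lambda-\kappa)\ip{\nabla_EE}{Z}$. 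Since $\lambda\ne\kappa$, we get $\ip{\nabla_EE}{Z}=0$. Because $\ip{\nabla_EE}{E}=0$ and such $Z$ span $E^\perp$, it follows that $\nabla_EE=0$.

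\emph{Step 3: $Z|\nabla\kappa|=0$ for $Z\perp E$.} Put $f=|\nabla\kappa|$, so that $d\kappa=f\,\theta$ with $\theta=\ip{E}{\cdot}$. Taking $d$ gives $0=df\wedge\theta+f\,d\theta$. Evaluate on $(E,Z)$ with $Z\perp E$:
\[
 d\theta(E,Z)=E\ip EZ-Z\ip EE-\ip E{[E,Z]}=-\ip E{\nabla_EZ-\nabla_ZE}=\ip{\nabla_EE}{Z}+0=0,
\]
using $\ip{E}{\nabla_ZE}=0$ and Step 2. The closedness relation on $(E,Z)$ therefore reduces to $-(Zf)\theta(E)=0$, that is, $Zf=0$.

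\emph{Step 4: the coordinate $s$.} By Step 3, $f$ has zero derivative along the level sets of $\kappa$. Standard theorem~\ref{std:inverse}, in a product chart with first coordinate $\kappa$, gives $f=\varphi(\kappa)$ with $\varphi>0$. Define $s=\int d\kappa/\varphi(\kappa)$. Then $\nabla s=\nabla\kappa/\varphi(\kappa)=E$, so $s$ has unit gradient and is a signed distance coordinate. Since $s$ is a strictly increasing function of $\kappa$, we may write $\kappa=\kappa(s)$.

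The main obstacle is the first step: the multiplicity argument must use the constant-multiplicity neighborhood so that $D$ is a smooth distribution. If $\dim D\ge2$ at one point of such a neighborhood, it holds nearby, so the pointwise contradiction suffices.
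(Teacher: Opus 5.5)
Your proof is correct and follows the paper's argument essentially step for step: the same Codazzi pairings give simplicity of $\kappa$ and $\nabla_EE=0$, and evaluating $d(f\theta)=d(d\kappa)=0$ on $(E,Z)$ is the exterior-form version of the paper's Hessian-symmetry identity $Z|\nabla\kappa|=\Hess\kappa(Z,E)=\Hess\kappa(E,Z)=|\nabla\kappa|\ip{\nabla_EE}{Z}$. The one ingredient you take as standard, smoothness of the eigenvalues and eigendistributions on a constant-multiplicity neighborhood, is verified in the paper through the nonsingular weighted Vandermonde Jacobian of the power sums $\tr A^j$ and the polynomial projections $P_\alpha$.
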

\begin{proof}
Such a neighborhood exists inside any nonempty open set. Choose a point where the number of distinct eigenvalues of $A$ is maximal there, separate the spectral clusters by disjoint intervals and shrink. No cluster can split without contradicting maximality. Here is a real-variable verification of smoothness. Let the separated eigenvalues be $u_1,\ldots,u_q$, with fixed multiplicities $m_1,\ldots,m_q$. For clarity, if $\xi_1(C)\le\cdots\le\xi_n(C)$ are the eigenvalues of a symmetric matrix $C$, its min--max formula gives
\[
 \xi_j(C)=\min_{\dim L=j}\ \max_{\substack{w\in L\\|w|=1}}\ip{Cw}{w},
 \qquad |\xi_j(C)-\xi_j(\widetilde C)|\le\|C-\widetilde C\|.
\]
Thus the spectral clusters are continuous in a local orthonormal trivialization. For $1\le j\le q$, put
\[
 s_j=\tr A^j=\sum_{\alpha=1}^q m_\alpha u_\alpha^j,
 \qquad
 \det\left(\frac{\partial s_j}{\partial u_\alpha}\right)
   =\left(\prod_{j=1}^qj\right)\left(\prod_{\alpha=1}^qm_\alpha\right)
       \prod_{\alpha<\beta}(u_\beta-u_\alpha)\ne0.
\]
The displayed Jacobian is the weighted Vandermonde matrix of the power-sum map. Standard theorem~\ref{std:inverse} supplies its smooth local inverse; composing with the smooth functions $s_j$ makes all $u_\alpha$ smooth. The polynomial projections
\[
 P_\alpha=\prod_{\beta\ne\alpha}
           \frac{A-u_\beta\Id}{u_\alpha-u_\beta}
\]
are therefore smooth. Applying them to a fixed local frame and using Gram--Schmidt gives local smooth orthonormal eigenfields.

If $AY=\kappa Y$ for a local unit field $Y\perp E$, expand
\[
 (\nabla_YA)E=(Y\kappa)E+(\kappa\Id-A)\nabla_YE.
\]
Since $A$ is self-adjoint, this gives
\[
 \begin{aligned}
 \ip{(\nabla_YA)E}{Y}
 &=(Y\kappa)\ip EY+\ip{\nabla_YE}{(\kappa\Id-A)Y}=0,\\
 \ip{(\nabla_EA)Y}{Y}&=E\kappa.
 \end{aligned}
\]
Codazzi would therefore imply $E\kappa=0$, contradicting
$E\kappa=|\nabla\kappa|>0$. Thus the gradient eigenvalue is simple;
no simplicity of a transverse eigenvalue has been used. For an eigenvector $AY=\lambda Y$ with $\lambda\ne\kappa$, the $E$ component of Codazzi gives
\[
 Y\kappa=(\kappa-\lambda)\ip{\nabla_EE}{Y}.
\]
The left side is zero, and hence $\nabla_EE=0$. Put $v=|\nabla\kappa|$. For $Z\perp E$, Hessian symmetry gives the full calculation
\[
 \begin{aligned}
 Zv&=\ip{\nabla_Z(vE)}E=\Hess\kappa(Z,E)
     =\Hess\kappa(E,Z)\\
   &=\ip{\nabla_E(vE)}Z=v\ip{\nabla_EE}Z=0.
 \end{aligned}
\]
Because $d\kappa\ne0$, Standard theorem~\ref{std:inverse} applies with $q=\kappa$. The identity $Zv=0$ on $\ker d\kappa$ then gives $v=\varphi(\kappa)>0$ on a smaller product chart. The coordinate
$s=\int_{\kappa(p)}^\kappa\varphi(u)^{-1}\dd u$ has $\nabla s=E$.
\end{proof}
Such a neighborhood can be chosen inside every nonempty open gradient region. In particular, a strict violation of an open amplitude bound persists on a nearby constant-multiplicity neighborhood; this observation will handle points of changing transverse multiplicity in Section~\ref{sec:applications}.

\subsection{The commuting transverse system}
\begin{lemma}[Commuting transverse equations]\label{lem:matrix}
Define on $E^\perp$
$S=A|_{E^\perp}$ and $BZ=-\nabla_ZE$. Along an $E$-curve, in parallel identification,
\begin{equation}\label{eq:matrix}
 S'=(S-\kappa\Id)B,
 \qquad B'=B^2+\kappa S+c\Id,
 \qquad SB=BS.
\end{equation}
A parallel orthonormal frame diagonalizes both throughout a short curve. Its diagonal entries satisfy
\begin{equation}\label{eq:pairs}
 \lambda_i'=(\lambda_i-\kappa)\mu_i,
 \qquad \mu_i'=\mu_i^2+\kappa\lambda_i+c,
 \qquad 1\le i\le d.
\end{equation}
\end{lemma}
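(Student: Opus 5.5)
The plan is to derive all three relations in \eqref{eq:matrix} from Codazzi, Gauss and the Riccati equation for the distance coordinate $s$ of Lemma~\ref{lem:gradient}. The diagonal form \eqref{eq:pairs} then follows by ODE uniqueness, without any smooth choice of eigenvectors. Throughout I use $\nabla_EE=0$, $AE=\kappa E$ with $\kappa$ simple, $\kappa=\kappa(s)$, $Z\kappa=0$ for $Z\perp E$, and $E=\nabla s$.

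\emph{Preliminaries.} Since $\nabla_EE=0$, parallel transport along an $E$-curve preserves $E$ and hence $E^\perp$. So for an endomorphism field $P$ of $E^\perp$, the derivative $P'$ in the parallel identification is $\nabla_EP$ composed with projection to $E^\perp$. Because $|E|=1$, $\nabla_ZE\perp E$, so $B$ maps $E^\perp$ into itself. Moreover $B=-\Hess s|_{E^\perp}$ because $E=\nabla s$, so $B$ is self-adjoint. Since $AE=\kappa E$, $S$ is a self-adjoint endomorphism of $E^\perp$.

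\emph{First equation and commutation.} For $Z\perp E$, Codazzi \eqref{eq:codazzi} gives $(\nabla_EA)Z=(\nabla_ZA)E$. I expand the right side:
\[
(\nabla_ZA)E=(Z\kappa)E+\kappa\nabla_ZE-A\nabla_ZE=(S-\kappa\Id)BZ,
\]
using $Z\kappa=0$. The $E$-component of the left side is $\ip{Z}{(\nabla_EA)E}=\ip{Z}{(E\kappa)E+(\kappa\Id-A)\nabla_EE}=0$. Hence $S'=(S-\kappa\Id)B$. The derivative $S'$ is self-adjoint, so $(S-\kappa\Id)B=B(S-\kappa\Id)$, which gives $SB=BS$.

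\emph{Second equation.} I use the standard Riccati identity for the Hessian $U=\Hess s$ of a distance function along its unit-speed gradient geodesics:
\[
U'+U^2+R(\cdot,E)E=0.
\]
By the Gauss equation \eqref{eq:gauss}, for $Z\perp E$,
\[
R(Z,E)E=cZ+\ip{AE}{E}AZ-\ip{AZ}{E}AE=cZ+\kappa SZ.
\]
Since $B=-U$, this yields $B'=B^2+\kappa S+c\Id$. The sign of the curvature convention must be checked once against the definition of $R$; this is the only place where signs are delicate.

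\emph{Diagonalization.} This step needs care, because eigenvectors of commuting families need not be smooth when eigenvalues cross. I avoid choosing eigenvectors along the curve. At the initial point, the commuting self-adjoint pair $S,B$ has a common orthonormal eigenbasis; transport it in parallel along the curve. Next, solve the scalar system \eqref{eq:pairs} with the given smooth coefficient $\kappa(s)$ and the initial eigenvalues; solutions exist on a short interval. The diagonal matrices built from these solutions commute, so they solve the matrix system \eqref{eq:matrix} with the same initial data. That matrix system is a smooth ODE in the pair $(S,B)$, so uniqueness forces $S$ and $B$ to coincide with the diagonal solutions on the short interval. Hence the parallel frame diagonalizes both throughout, and the diagonal entries satisfy \eqref{eq:pairs}.
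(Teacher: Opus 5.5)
Your proof is correct and follows essentially the paper's route. Codazzi with $Z\kappa=0$ gives $S'=(S-\kappa\Id)B$, and symmetry of $S'$ then gives $SB=BS$; Gauss combined with a Riccati identity gives the $B$-equation; and uniqueness for the matrix ODE with prescribed $\kappa(s)$ propagates the initial simultaneous diagonalization. The only difference is that you cite the standard Riccati equation $U'+U^2+R(\cdot,E)E=0$ for $U=\Hess s$, and leave its sign unchecked. That sign is in fact correct for the paper's convention, since $R(E,Z)E=(\nabla_EU)Z+U^2Z$ for parallel $Z$. The paper instead derives this identity directly, by extending a parallel field $Z$ with $[E,Z]=BZ$ and computing $R(E,Z)E=-B'Z+B^2Z$.
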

\begin{proof}
Both $S$ and $B$ are self-adjoint; for $Z,W\perp E$,
$\ip{BZ}{W}=-v^{-1}\Hess\kappa(Z,W)$. Parallel transport preserves $E^\perp$ because $\nabla_EE=0$. Codazzi and $Z\kappa=0$ give
\[
 (\nabla_EA)Z=(\nabla_ZA)E=(S-\kappa\Id)BZ.
\]
The left side is symmetric. Taking the transpose gives $SB=BS$.
For $Z$ parallel along $E$, extend it by parallel transport along nearby $E$-curves; then $[E,Z]=BZ$. The definition of curvature now gives the full Riccati calculation
\[
 \begin{aligned}
 R(E,Z)E
 &=\nabla_E\nabla_ZE-\nabla_Z\nabla_EE-\nabla_{[E,Z]}E\\
 &=\nabla_E(-BZ)-\nabla_{BZ}E=-B'Z+B^2Z.
 \end{aligned}
\]
On the other hand, $AE=\kappa E$, $AZ=SZ$ and \eqref{eq:gauss} give
$R(E,Z)E=-\kappa SZ-cZ$. Equating the two expressions proves the second equation in \eqref{eq:matrix}.

At an initial point, commuting symmetric matrices are simultaneously diagonalizable: diagonalize $S$ and then diagonalize $B$ on each eigenspace, which it preserves. The right side of the matrix system preserves diagonal pairs. Local uniqueness for the nonautonomous matrix ODE with prescribed smooth $\kappa(s)$ therefore preserves diagonality in this fixed parallel frame. Repeated transverse eigenvalues present no obstruction. This simultaneous diagonalization is only along the chosen curve; it neither constructs nor assumes an integrable principal-coordinate net on a neighborhood of the hypersurface.
\end{proof}
\subsection{The complete necessary system}
The trace, divergence and normal equation now give
\begin{equation}\label{eq:full-ode}
 \sum_i\lambda_i=-\Theta(\kappa),\qquad
 y''-\mathsf T y'=y(\kappa^2+\mathsf Q-\Lambda),
 \qquad y=Y(\kappa),
\end{equation}
where $\mathsf T=\sum_i\mu_i$ and $\mathsf Q=\sum_i\lambda_i^2$.
Indeed $\tau=\kappa+\tr S=-T_F(\kappa)$ and $\Div E=-\mathsf T$. Specifically, $\kappa$, $\tau$ and $y$ depend only on $s$ on this neighborhood, and
\[
 \Delta y=\Div(y'E)=y''+y'\Div E=y''-\mathsf T y'.
\]
Thus $\kappa,\tau,y$ are constant along the local level leaves, whereas
no such transverse constancy is asserted for the individual $\lambda_i$ or $\mu_i$:
their equations are restricted to a single $E$-curve. In particular, simultaneous
diagonalization along that curve does not assert an integrable principal-coordinate
net on an open hypersurface neighborhood. The power case has
\[
 r=p-1,\quad \Theta(k)=(2r+1)k,\quad Y(k)=(2r)^r k^r;
\]
the positive constant multiplying $Y$ is immaterial. The pair equations alone are not sufficient: the open geometric interval also supplies the normal equation in \eqref{eq:full-ode}. This interval identity is what will survive analytic continuation. The curvature pair system agrees, in the ordinary-law case, with \cite[Proposition~4.7]{KU}, but the trace law in \eqref{eq:full-ode} is allowed to be nonlinear here.

\section{Analytic continuation of the necessary system}\label{sec:continuation}
\subsection{Local existence, uniqueness and analyticity}
The following continuation principle concerns a finite-dimensional state, not an extended immersion. Its full elementary proof is deferred to Appendix~\ref{app:odeproof} so that the constraint argument remains uninterrupted.
\begin{lemma}[Local existence, analytic persistence and continuation]\label{lem:analytic}
Let $\Omega\subset\R^m$ be open and $\mathcal V:\Omega\to\R^m$ real analytic. Every initial state has a unique maximal solution to $z'=\mathcal V(z)$. The solution is real analytic in the interior of its maximal interval. A real-analytic function defined on all of $\Omega$ that vanishes along the state on a nonempty open time subinterval vanishes along it on the entire connected maximal interval. If the state stays in a compact subset of $\Omega$ at a finite endpoint, the solution extends past that endpoint.
\end{lemma}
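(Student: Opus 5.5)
The plan has four parts: Picard--Lindelöf for existence and uniqueness, the Cauchy--Kovalevskaya majorant method for analyticity, the identity theorem for persistence, and a uniform-time argument for continuation.

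\emph{Existence and uniqueness.} A real-analytic $\mathcal V$ is in particular $C^1$, so it is locally Lipschitz on $\Omega$. The Picard--Lindelöf theorem then gives a unique local solution through each initial state $z(s_0)=z_0$. Two solutions agree on the intersection of their domains: the set where they coincide is closed by continuity and open by local uniqueness, and the intersection of two intervals is connected. Taking the union of all solution intervals through $z_0$ therefore yields a unique maximal solution on an open interval $I$.

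\emph{Analyticity.} Fix $s_1\in I$ and $z_1=z(s_1)$. Near $z_1$, Cauchy estimates give a majorant
\[
 \mathcal V_j(z)\ll \frac{M}{1-\sum_k(z_k-z_{1,k})/\rho}
\]
for every component $j$. I would seek a formal power-series solution $z(s)=z_1+\sum_{\ell\ge1}c_\ell(s-s_1)^\ell$; its coefficients are determined recursively by polynomials with nonnegative coefficients in the Taylor coefficients of $\mathcal V$. The same recursion applied to the scalar majorant equation $w'=M/(1-mw/\rho)$, $w(s_1)=0$, has the explicit analytic solution
\[
 w(s)=\frac{\rho}{m}\left(1-\sqrt{1-2mM(s-s_1)/\rho}\right),
\]
and its coefficients dominate the $|c_\ell|$. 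Hence the formal series converges near $s_1$ and defines an analytic solution. By uniqueness it coincides with $z$, so $z$ is analytic on all of $I$. \emph{This majorant step is the main technical point}; everything else is soft. If one prefers to avoid majorants, the alternative is to complexify $\mathcal V$ near the real orbit and run Picard iteration with complex time: uniform limits of holomorphic functions are holomorphic, and restricting to real time recovers $z$.

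\emph{Persistence.} Let $G$ be real analytic on $\Omega$. The composition $g=G\circ z$ is real analytic on the connected interval $I$. If $g$ vanishes on a nonempty open subinterval, then the set of points where all derivatives of $g$ vanish is nonempty and closed. It is also open, because at each such point the convergent Taylor series is identically zero. By connectedness $g\equiv0$ on $I$.

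\emph{Continuation.} Let $b=\sup I<\infty$ and suppose $z(s)\in\mathcal C$ for $s$ near $b$, with $\mathcal C\subset\Omega$ compact. Choose $\delta>0$ such that the closed $\delta$-neighborhood $\mathcal C_\delta$ of $\mathcal C$ lies in $\Omega$. Let $M=\sup_{\mathcal C_\delta}|\mathcal V|$, and let $L$ be a Lipschitz constant of $\mathcal V$ on $\mathcal C_\delta$. The Picard--Lindelöf theorem then gives a uniform existence time $h=\delta/M$ from every initial state in $\mathcal C$. Starting at $s_0=b-h/2$ and gluing by uniqueness, the solution extends to $b+h/2$, contradicting maximality. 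Equivalently, one can note that $z'$ is bounded, so $z$ is Lipschitz, the limit $\lim_{s\to b}z(s)$ exists in $\mathcal C$, and one restarts from that limit. The same argument applies at the left endpoint.
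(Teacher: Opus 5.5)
Your proposal is correct and follows essentially the same route as the paper's proof in Appendix~\ref{app:odeproof}: a contraction (Picard) argument with gluing for the maximal solution, a real coefficient majorant compared against the explicit square-root solution of $W'=C/(1-W/R)$ for analyticity, and an open--closed argument on the connected maximal interval for persistence. For continuation the paper uses the bounded-velocity Lipschitz limit, which is the alternative you mention, and your uniform-existence-time argument is an equally valid substitute.
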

\begin{proof}
A complete proof, including the coefficient majorant and the open-interval identity principle, is given in Appendix~\ref{app:odeproof}. It uses only the contraction principle and finite-dimensional power series.
\end{proof}
We also use the elementary analytic inverse principle. If $q$ is real analytic and $q'(x_0)\ne0$, the solution of
\[
 x'(u)=\frac1{q'(x(u))},\qquad x(q(x_0))=x_0
\]
is analytic by Lemma~\ref{lem:analytic}, and differentiation gives $q(x(u))=u$. Thus the local inverse is analytic. This applies to $k_F$ and $\Theta$ wherever their derivatives are positive.

In the power-law specialization, the state field is polynomial and extends to the entire finite-dimensional state space. Real analyticity on an open domain, together with explicit avoidance of its boundary, is enough for the general law. Appendix~\ref{app:quadratic} also gives a real coefficient-majorant proof in the quadratic case.

\subsection{The state domain and the persistent constraint}
The response functions in \eqref{eq:Theta-Y} satisfy the following hypotheses, under which we formulate the continuation argument: $\Theta,Y:(0,\infty)\to(0,\infty)$ are real analytic, $\Theta',Y'>0$, $\Theta$ is onto $(0,\infty)$, and
\begin{equation}\label{eq:abstract-comparison}
 0<a_-k\le\Theta(k)\le a_+k\quad(k>0)
\end{equation}
for fixed $a_-,a_+>0$. Let
\[
 \Omega=\{z=(\lambda,\mu):\textstyle\sum_i\lambda_i<0\},
 \qquad k(z)=\Theta^{-1}\left(-\textstyle\sum_i\lambda_i\right).
\]
Substitute $k(z)$ for $\kappa$ in \eqref{eq:pairs}. This gives a real-analytic vector field on $\Omega$, in which the trace is built in. To make the differentiation of the constraint explicit, write $D$ for differentiation along this field and set
$\mathsf U=\sum_i(\lambda_i-k)\mu_i$. Differentiating $\Theta(k)=-\sum_i\lambda_i$ once and twice gives
\[
 Dk=-\frac{\mathsf U}{\Theta'(k)},\qquad
 D^2k=-\frac{D\mathsf U+\Theta''(k)(Dk)^2}{\Theta'(k)},
\]
where the pair equations yield
\[
 \begin{aligned}
 D\mathsf U
 &=\sum_i\{(D\lambda_i-Dk)\mu_i+(\lambda_i-k)D\mu_i\}\\
 &=2\sum_i(\lambda_i-k)\mu_i^2-\mathsf T Dk
             +\sum_i(\lambda_i-k)(k\lambda_i+c).
 \end{aligned}
\]
Every term is analytic on $\Omega$, since $\Theta'(k)>0$. The chain rule for $y=Y(k)$ therefore expresses the normal equation as the analytic residual
\begin{equation}\label{eq:general-residual}
 \RR(z)=Y''(k)(Dk)^2+Y'(k)D^2k
        -\mathsf T Y'(k)Dk
        -Y(k)(k^2+\mathsf Q-\Lambda)=0.
\end{equation}
To identify the geometric and continued states, let $z_{\mathrm{geom}}(s)$ be
the state on the original open geometric interval, and let $z_{\max}(s)$ be the
maximal solution of the analytic vector field with the same state at one interior
time. Both solve the same first-order state equation. Local uniqueness in
Lemma~\ref{lem:analytic} gives $z_{\max}=z_{\mathrm{geom}}$ on a nonempty open
time interval. The geometric normal equation consequently supplies
\[
 \RR(z_{\max}(s))=0
\]
on that entire interval. The state $z_{\max}$ is analytic, so the real-analytic
identity principle makes this equality hold throughout its connected maximal
interval. This argument neither requires the original immersion to be analytic
nor extends that immersion beyond its original domain. The zero set of $\RR$
is not asserted to be invariant through every one of its points: a single equality
$\RR(z_0)=0$ is insufficient, as Appendix~\ref{app:quadratic} demonstrates.

Define $K_\Lambda=0$ when $\Lambda\le0$. When $\Lambda>0$, define $K_\Lambda$ by
\begin{equation}\label{eq:KLambda}
 K_\Lambda^2+\frac{\Theta(K_\Lambda)^2}{d}=\Lambda.
\end{equation}
The function $G(k)=k^2+\Theta(k)^2/d-\Lambda$ is strictly increasing for $k>0$, and the comparison \eqref{eq:abstract-comparison} proves existence and uniqueness of this root. We shall exclude a trajectory with
\begin{equation}\label{eq:supercritical}
 \kappa(0)>0,\qquad \kappa'(0)>0,\qquad
 \kappa(0)\ge K_\Lambda.
\end{equation}
The initial inequality is vacuous beyond positivity if $\Lambda\le0$.

\begin{lemma}[Persistent positivity]\label{lem:flux}
On the maximal forward interval $[0,b)$ of such a trajectory, put
\[
 j_i(s)=\exp\left(-\int_0^s\mu_i(u)\dd u\right)>0,
 \qquad J=\prod_i j_i.
\]
Then
\begin{equation}\label{eq:flux-positive}
 \kappa'>0,\quad\kappa\ge\kappa(0)>0,\quad
 (y'J)'=y(\kappa^2+\mathsf Q-\Lambda)J\ge0,
 \quad y'J\ge y'(0)>0.
\end{equation}
The trajectory cannot reach the boundary of $\Omega$.
\end{lemma}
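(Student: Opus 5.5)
The plan is a continuity (bootstrap) argument along the continued analytic state, driven by the persistent normal equation. By the discussion preceding the lemma, $\RR(z_{\max}(s))=0$ holds on the whole connected maximal interval. Along the state, $y=Y(k(z))$ and $y'=Y'(\kappa)\kappa'$, so \eqref{eq:general-residual} is exactly $y''-\mathsf T y'=y(\kappa^2+\mathsf Q-\Lambda)$ on all of $[0,b)$.

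\emph{The flux identity.} The functions $\mu_i$ are continuous on $[0,b)$, so the $j_i$ are well-defined and positive, and $J'=-\mathsf T J$. Hence
\[
(y'J)'=(y''-\mathsf Ty')J=y(\kappa^2+\mathsf Q-\Lambda)J.
\]
\emph{The sign of the potential.} Since $\sum_i\lambda_i=-\Theta(\kappa)$ is built into $\Omega$, Cauchy--Schwarz gives $\mathsf Q\ge\Theta(\kappa)^2/d$. Therefore $\kappa^2+\mathsf Q-\Lambda\ge G(\kappa)$. This is nonnegative whenever $\kappa\ge K_\Lambda$, because $G$ is strictly increasing with $G(K_\Lambda)=0$. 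When $\Lambda\le0$ it holds for every $\kappa>0$, since then $G(\kappa)>0$ trivially.

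\emph{Bootstrap.} Let $s_*=\sup\{s<b:\kappa'>0\text{ on }[0,s]\}$; this is positive by continuity since $\kappa'(0)>0$. On $[0,s_*)$ the function $\kappa$ is increasing, so $\kappa\ge\kappa(0)\ge K_\Lambda$. The previous step then gives $(y'J)'\ge0$, using $y=Y(\kappa)>0$ and $J>0$. Hence $y'J\ge y'(0)J(0)=Y'(\kappa(0))\kappa'(0)>0$.

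Suppose $s_*<b$. By continuity $y'(s_*)J(s_*)\ge y'(0)>0$, so $\kappa'(s_*)=y'(s_*)/Y'(\kappa(s_*))>0$. Then $\kappa'>0$ persists slightly beyond $s_*$, contradicting the definition of $s_*$. Thus $s_*=b$, and all four assertions in \eqref{eq:flux-positive} hold on $[0,b)$.

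\emph{Staying inside $\Omega$.} Since $\Theta$ is increasing and $\kappa\ge\kappa(0)$, we get $\sum_i\lambda_i=-\Theta(\kappa)\le-\Theta(\kappa(0))<0$ on $[0,b)$. The state therefore stays in a closed half-space strictly inside $\Omega$, uniformly away from the hyperplane $\sum_i\lambda_i=0$. So no limit point of the trajectory as $s\to b$ can lie on $\partial\Omega$.

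The main point requiring care is the first step: the identity $\RR=0$ must be available on the entire maximal interval, not merely on the original geometric interval. That is exactly what the analytic-persistence argument via Lemma~\ref{lem:analytic} supplies. After that, the only delicacy is running the bootstrap so that monotonicity of $\kappa$ and positivity of the flux $y'J$ feed each other.
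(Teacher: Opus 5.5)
Your proposal is correct and follows essentially the same route as the paper. Both arguments apply Cauchy--Schwarz to the built-in trace to make the potential nonnegative once $\kappa\ge K_\Lambda$, derive the flux identity $(y'J)'=y(\kappa^2+\mathsf Q-\Lambda)J$ from $J'=-\mathsf TJ$, run a continuity argument at the first failure of $\kappa'>0$ (your supremum $s_*$ plays the role of the paper's ``first zero''), and finish with the half-space bound $\sum_i\lambda_i\le-\Theta(\kappa(0))<0$; your explicit remark that $\RR=0$ must hold on the whole continued interval is exactly what the persistence argument preceding the lemma supplies.
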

\begin{proof}
By Cauchy--Schwarz and the trace,
$\mathsf Q\ge\Theta(\kappa)^2/d$. As long as $\kappa'>0$, one has $\kappa\ge\kappa(0)$ and the potential $\kappa^2+\mathsf Q-\Lambda$ is nonnegative. The normal equation and $J'=-\mathsf TJ$ then give
\[
 (y'J)'=y(\kappa^2+\mathsf Q-\Lambda)J\ge0,
 \qquad y'J\ge y'(0)>0.
\]
If $\kappa'$ had a first zero at an interior time, these inequalities would persist there by continuity; since $J>0$ and $Y'>0$, they would give $\kappa'=y'/Y'>0$ at that time, a contradiction. Thus $\kappa'>0$ throughout the maximal forward interval and \eqref{eq:flux-positive} follows. In particular,
$\sum_i\lambda_i=-\Theta(\kappa)\le-\Theta(\kappa(0))<0$, so bounded states stay a positive distance from the boundary of $\Omega$.
\end{proof}
This is the precise point at which the sign of the normal equation is used. No convexity or other geometric condition is assumed to persist on an abstract extended trajectory.
\subsection{An integrable-coefficient estimate}
We will use the following form of Gronwall's inequality; compare \cite[Lemma~2.7]{Teschl}. Its last assertion is useful for the auxiliary moving frames.
\begin{standard}[Gronwall estimate and a terminal linear-system limit]\label{std:Gronwall}
Suppose $u\ge0$ is continuous on $[s_0,b)$, $a\ge0$ is locally integrable there, and
\[
 u(s)\le A+\int_{s_0}^s a(t)u(t)\dd t,\qquad A\ge0.
\]
Then $u(s)\le A\exp(\int_{s_0}^s a(t)\dd t)$. In particular, if $b<\infty$ and a finite-dimensional linear system $z'=M(s)z+f(s)$ has continuous coefficients on $[s_0,b)$ with
$\int_{s_0}^b(\|M\|+|f|)\dd s<\infty$, every solution has a finite coordinate limit at $b$.
\end{standard}
\begin{proof}
For $A>0$, the function $w(s)=A+\int_{s_0}^s au$ satisfies $u\le w$ and $w'\le aw$ almost everywhere. Multiplying by $\exp(-\int_{s_0}^s a)$ proves the bound. Replacing $A=0$ by $A+\varepsilon$ and letting $\varepsilon\downarrow0$ handles that case. For the linear system apply the bound to $|z(s)|$, with
$A=|z(s_0)|+\int_{s_0}^b|f|$ and $a=\|M\|$. This bounds $z$; hence $|z'|\le\|M\|\sup|z|+|f|$ is integrable. The estimate
$|z(v)-z(u)|\le\int_u^v|z'|$ makes $z$ Cauchy at $b$.
\end{proof}

\section{A common maximal-endpoint theorem}\label{sec:endpoint}
We prove the analytic statement needed for all three signs of the ambient curvature. All index sets in this section are finite, and indices are counted with multiplicity. The decisive conclusion is positivity of the complete active trace, not positivity of its individual summands.
\begin{theorem}[No supercritical increasing trajectory]\label{thm:endpoint}
Let $d\ge1$ be an integer and let $c,\Lambda\in\R$. Suppose that
$\Theta,Y:(0,\infty)\to(0,\infty)$ are real analytic,
\[
 \Theta'(k)>0,\qquad Y'(k)>0\quad(k>0),\qquad
 \Theta((0,\infty))=(0,\infty),
\]
and that, for constants $0<a_-\le a_+<\infty$,
\[
 a_-k\le\Theta(k)\le a_+k\qquad(k>0).
\]
Set $K_\Lambda=0$ if $\Lambda\le0$; if $\Lambda>0$, let $K_\Lambda$ be the unique positive solution of
\[
 K_\Lambda^2+\frac{\Theta(K_\Lambda)^2}{d}=\Lambda.
\]
There is no nonempty open interval $I$ carrying smooth real functions $\kappa>0$, $\lambda_i,\mu_i$ ($1\le i\le d$) such that, with $y=Y(\kappa)$,
\begin{equation}\label{eq:endpoint-system}
 \begin{aligned}
 \lambda_i'&=(\lambda_i-\kappa)\mu_i,
 &\quad \mu_i'&=\mu_i^2+\kappa\lambda_i+c,\qquad 1\le i\le d,\\
 \sum_{i=1}^d\lambda_i&=-\Theta(\kappa),
 &\quad y''-\Bigl(\sum_{i=1}^d\mu_i\Bigr)y'
   &=y\Bigl(\kappa^2+\sum_{i=1}^d\lambda_i^2-\Lambda\Bigr),
 \end{aligned}
\end{equation}
and, at some $s_0\in I$,
\[
 \kappa'(s_0)>0,\qquad \kappa(s_0)\ge K_\Lambda.
\]
The derivatives in \eqref{eq:endpoint-system} are with respect to the interval parameter $s$.
\end{theorem}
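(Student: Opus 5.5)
Assume such an interval and data exist, and argue by contradiction.

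\emph{Step 1: continue the system.} Translate so that $s_0=0$. Let $z=(\lambda,\mu)$ be the state in $\Omega$ with the trace built in, as in Section~\ref{sec:continuation}. By Lemma~\ref{lem:analytic}, the maximal forward solution $z_{\max}$ on $[0,b)$ agrees with the given data near $0$. The residual $\RR$ of \eqref{eq:general-residual} vanishes on an open interval, hence on all of $[0,b)$. The hypotheses $\kappa'(0)>0$ and $\kappa(0)\ge K_\Lambda$ are exactly \eqref{eq:supercritical}. Lemma~\ref{lem:flux} therefore gives, on all of $[0,b)$:
\[
 \kappa'>0,\qquad (y'J)'\ge0,\qquad y'J\ge y'(0)>0,
\]
and the trajectory never approaches $\partial\Omega$. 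By the last clause of Lemma~\ref{lem:analytic}, the forward solution then exists as long as the state stays bounded. The contradiction must come from analysing how the forward trajectory ends: either $b<\infty$ with blow-up, or $b=\infty$.

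\emph{Step 2: the pair ODEs as focal/Riccati data.} Set $j_i=\exp(-\int_0^s\mu_i)$. Then $j_i''=(-\mu_i'+\mu_i^2)j_i=-(\kappa\lambda_i+c)j_i$. Also, from $\lambda_i'=(\lambda_i-\kappa)\mu_i$ one gets
\[
 \bigl((\lambda_i-\kappa)j_i\bigr)'=-\kappa'j_i ,
\]
so $(\lambda_i-\kappa)j_i$ is strictly decreasing. This is the osculating-circle/Tait--Kneser structure. Each $j_i$ is a positive focal factor solving a Jacobi-type equation, and the quantities $\lambda_i-\kappa$ behave like differences of curvatures of nested osculating spheres.

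I would first rule out $b=\infty$. Since $\kappa$ is increasing and $y'J\ge y'(0)$, we have $\kappa'\ge y'(0)/(Y'(\kappa)J)$. Combining this with $\sum\lambda_i=-\Theta(\kappa)$ and Cauchy--Schwarz gives
\[
 \sum_i j_i''/j_i\ \ge\ \kappa\,\Theta(\kappa)-dc .
\]
Once $\kappa$ is large, this forces some $j_i$ to reach zero in finite time, which contradicts $j_i>0$. For bounded $\kappa$ I would use a separate Sturm comparison: because $\kappa\ge\kappa(0)>0$, the averaged coefficient $\kappa\Theta(\kappa)/d-c$ is eventually positive unless $c$ is large, and in that case $\Lambda$ controls it.

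\emph{Step 3: finite endpoint (main obstacle).} When $b<\infty$, the state must be unbounded, so some $\mu_i$ or $\lambda_i$ blows up. I would show that the blow-up is of focal type: $\mu_i\to+\infty$ and $j_i\to0$ for an \emph{active} index set $I_*$. The inactive indices are then controlled by Standard theorem~\ref{std:Gronwall}: their linear systems in $(j_i,(\lambda_i-\kappa)j_i)$ have integrable coefficients and therefore finite limits.

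For active indices, $\mu_i\sim1/(b-s)$. Integrating $\lambda_i'=(\lambda_i-\kappa)\mu_i$ shows that $\lambda_i-\kappa$ behaves like $C_i/(b-s)$ unless it is cancelled. The trace constraint with $\kappa$ bounded forces
\[
 \sum_{i\in I_*}(\lambda_i-\kappa)j_i\to0 ,
\]
which is the complete active trace. Individual summands need not have a sign, but the monotone decrease of each $(\lambda_i-\kappa)j_i$ together with $\kappa'>0$ makes the complete active trace strictly negative at the endpoint. This contradicts the limit $0$. Separately, $\kappa\to\infty$ as $s\to b$ would be excluded by the normal equation, since $y'J$ is bounded by the integrable data.

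I expect the hardest part to be this terminal bookkeeping: proving that the active set is well defined, that the active $j_i$ vanish at the same rate, and that the signed trace limit is strictly nonzero. I would try first to work with $\mathsf B=\langle N,N_*\rangle$-type distance inequalities between the osculating spheres at times $s$ and $b$.
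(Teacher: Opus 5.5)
Step~1 agrees with the paper. After that, your plan assigns the two available mechanisms to the wrong cases, and the case that actually occurs is never treated.

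The flux inequality $y'J\ge y'(0)>0$ of Lemma~\ref{lem:flux} is only a lower bound. Nothing bounds $y'J$ from above, so your claim that $\kappa\to\infty$ is excluded by the normal equation ``since $y'J$ is bounded by the integrable data'' has no basis. In the paper (Lemma~\ref{lem:end-curvature}) the flux does the opposite job: it excludes \emph{bounded} $\kappa$.
\begin{itemize}
\item If $b<\infty$, either every $j_i$ has a positive limit, so the state stays bounded and extends, or some $j_i\to0$. In the latter case $J\le C(b-s)$ and $y'\ge C_2/(b-s)$, so $y=Y(\kappa)$ is unbounded.
\item If $b=\infty$, your identity $\sum_ij_i''/j_i=\kappa\Theta(\kappa)-dc$, equivalently $\mathsf T'=\sum_i\mu_i^2-\kappa\Theta(\kappa)+dc\ge\mathsf T^2/d-C_0$, serves only as a lower barrier $\mathsf T\ge-C_1$. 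The normal equation then gives $(y')'\ge-C_1y'+c_*$ with $c_*>0$, and again $y$ is unbounded.
\end{itemize}
Your own use of that identity points the wrong way. A large positive average of $j_i''/j_i$ makes the factors convex and forces no zero; Sturm comparison produces zeros only from negative $j''/j$. Moreover $c$ and $\Lambda$ are independent parameters, so ``$\Lambda$ controls it'' is unfounded. The upshot is that $\kappa\to+\infty$ is forced. That case, including a priori the possibility $b=\infty$, is exactly what your proposal leaves open.

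What is missing is the geometric analysis of that case.
\begin{itemize}
\item \emph{Finite total curvature.} You need $\int_0^b\kappa<\infty$, hence $b<\infty$ (Lemma~\ref{lem:end-turn}). The paper builds an auxiliary model curve of curvature $\kappa$ with conserved focal vectors $v_i$, so each $j_i$ is affine or linear in $\gamma$. Its osculating centers converge with $\dist(C,\gamma_*)\le\rho$. The phase law combined with a $1$-Lipschitz support distance would give $D(C)>\rho$ if the total curvature were infinite.
\item \emph{Terminal coordinates.} Next come the exact formulas $j_i=\alpha_iU-\beta_iV$ and $\lambda_ij_i=\alpha_iP+\beta_i\mathsf B$.
\item \emph{Tangential contact.} One must exclude active factors with $\alpha_i=0$. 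The paper does this through the residue $V\lambda_i=-\mathsf B\to-1$, which is incompatible with $V\Theta(\kappa)\to0$. Your assertion $\mu_i\sim1/(b-s)$ presupposes this step rather than proving it.
\end{itemize}
Finally, your sign mechanism fails. A decreasing function $(\lambda_i-\kappa)j_i$ may have a limit of either sign. The trace constrains $\sum_i\lambda_i=\sum_i(\lambda_ij_i)/j_i$, so after rescaling it yields a relation weighted by the reciprocal vanishing rates of the $j_i$, not $\sum_{I_*}(\lambda_i-\kappa)j_i\to0$. In terminal coordinates $(\lambda_i-\kappa)j_i\to\beta_i$, whereas the trace gives only $\sum_{I_*}\beta_i/\alpha_i=0$. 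The paper feeds that weighted relation into an exact rewriting of the complete active sum as $mP/U$ plus a nonnegative term. This gives $\sum_{I_*}\lambda_i\ge\frac m4\kappa\to+\infty$. That contradicts $\sum_i\lambda_i=-\Theta(\kappa)\to-\infty$, because the inactive terms stay bounded.
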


\subsection{Proof setting and endpoint alternatives}
Suppose, for contradiction, that the interval in Theorem~\ref{thm:endpoint} exists, and translate $s_0$ to $0$. The system \eqref{eq:endpoint-system} is exactly \eqref{eq:pairs}, \eqref{eq:full-ode}. Continue its analytic state as in Section~\ref{sec:continuation} to the maximal forward interval $[0,b)$. This hypothetical trajectory is the standing assumption in the seven lemmas below. Lemma~\ref{lem:flux} gives $\kappa'>0$ and $y'J\ge y'(0)>0$ throughout.

The argument first excludes a finite curvature limit, then rules out infinite total turning, and finally treats the finite-time, finite-turning alternative. The table records this exhaustive division into cases.

\begin{center}
\small
\renewcommand{\arraystretch}{1.16}
\begin{tabular}{@{}p{.29\textwidth}p{.47\textwidth}p{.17\textwidth}@{}}
\toprule
Possible alternative & Contradiction or reduction & Reference \\
\midrule
$b<\infty$, $\kappa$ bounded & A compact state extends; a vanishing focal factor makes $y'$ nonintegrable by the flux bound. & Lemma~\ref{lem:end-curvature} \\
$b=\infty$, $\kappa$ bounded & A Riccati lower barrier forces $y'$ to stay positive, contradicting bounded $y$. & Lemma~\ref{lem:end-curvature} \\
$\kappa\to\infty$, $\int_0^b\kappa=\infty$ & A support distance contradicts the shrinking-center distance bound. & Lemmas~\ref{lem:end-centers}--\ref{lem:end-turn} \\
$\kappa\to\infty$, $\int_0^b\kappa<\infty$ & The endpoint is finite; exact coordinates make the complete active trace positive. & Lemmas~\ref{lem:end-terminal}--\ref{lem:end-positive} \\
\bottomrule
\end{tabular}
\end{center}

\subsection{Auxiliary focal representation}
\begin{lemma}[Auxiliary focal representation]\label{lem:end-focal}
The continued state admits a unit-speed auxiliary model curve $\gamma$ and constant focal vectors $v_i$. The positive factors $j_i$ have the affine or linear representations below and satisfy \eqref{eq:scalar-focal}--\eqref{eq:focal-norm}.
\end{lemma}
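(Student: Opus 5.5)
The approach is to realize the transverse pair equations \eqref{eq:pairs} as the Frenet equations of a plane curve in the two-dimensional model of curvature $c$, with each $j_i$ appearing as a (possibly affine) height function of that curve. On the maximal forward interval $[0,b)$ from Lemma~\ref{lem:flux}, put
\[
 w_i=\lambda_i j_i,\qquad j_i=\exp\Bigl(-\int_0^s\mu_i\Bigr).
\]
The first step is a direct computation from \eqref{eq:pairs}. Using $j_i'=-\mu_ij_i$ one gets
\[
 j_i''=(\mu_i^2-\mu_i')j_i=-\kappa w_i-c\,j_i,
 \qquad
 w_i'=\lambda_i'j_i-\lambda_i\mu_ij_i=-\kappa\mu_ij_i=\kappa j_i'.
\]
So each pair $(j_i,w_i)$ solves the same linear system $j''=-\kappa w-cj$, $w'=\kappa j'$. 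Its coefficients depend only on $\kappa$, which is continuous on $[0,b)$. This scalar system is presumably \eqref{eq:scalar-focal}.

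Next I construct $\gamma$ as the curve in the model surface with speed one and geodesic curvature $\kappa$. I use the ambient models $\R^2$ ($c=0$), the sphere of radius $c^{-1/2}$ in $\R^3$ ($c>0$), or the hyperboloid $\langle\gamma,\gamma\rangle=1/c$ in Minkowski $\R^{2,1}$ ($c<0$). The moving frame is determined by the linear Frenet system
\[
 \gamma'=T,\qquad T'=\kappa N-c\gamma,\qquad N'=-\kappa T .
\]
Fix an orthonormal initial frame. Because the system is linear with continuous coefficients, Standard theorem~\ref{std:Gronwall} gives a solution on all of $[0,b)$, and the quadratic constraints $\langle\gamma,\gamma\rangle=1/c$, $\langle T,T\rangle=\langle N,N\rangle=1$, and the mutual orthogonality relations are first integrals, so they persist along the solution.

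For a constant vector $v$, set $f=\langle\gamma,v\rangle$ and $g=-\langle N,v\rangle$. Then $f''=\kappa\langle N,v\rangle-cf=-\kappa g-cf$ and $g'=\kappa\langle T,v\rangle=\kappa f'$, which is exactly the scalar system above.
\begin{itemize}
\item For $c\ne0$ the vectors $\gamma(0),T(0),N(0)$ form a basis. I choose $v_i$ with $\langle\gamma(0),v_i\rangle=1$, $\langle T(0),v_i\rangle=-\mu_i(0)$ and $\langle N(0),v_i\rangle=-\lambda_i(0)$.
\item For $c=0$ I use the affine representation $j_i=1+\langle\gamma-\gamma(0),v_i\rangle$ with $v_i\in\R^2$, matched against $T(0)$ and $N(0)$ in the same way.
\end{itemize}
Uniqueness for the linear initial value problem then gives $j_i=\langle\gamma,v_i\rangle$ (respectively the affine formula) and $\lambda_ij_i=-\langle N,v_i\rangle$ on the whole of $[0,b)$.

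For the norm identity, presumably \eqref{eq:focal-norm}, I expand $v_i=c\,j_i\gamma+j_i'T-w_iN$ in the frame, which holds with the model normalization. This gives the conserved quantity
\[
 \langle v_i,v_i\rangle=c\,j_i^2+(j_i')^2+\lambda_i^2j_i^2=c+\mu_i(0)^2+\lambda_i(0)^2 .
\]
It can also be checked directly: its derivative is $2j_i'(cj_i+j_i''+\kappa w_i)=0$. For $c=0$ the analogous identity is $|v_i|^2=(j_i')^2+w_i^2$.

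I expect the main obstacle to be bookkeeping of signs and of the hyperbolic case. In the Minkowski model the inner product is indefinite, $\langle\gamma,\gamma\rangle=1/c<0$, and I must verify that the Frenet constraints are genuinely preserved and that the frame $(\gamma,T,N)$ still spans. I must also check that the chosen orientation of $N$ matches the sign in $w'=\kappa j'$, so that a single formula covers all three geometries. Positivity of $j_i$ needs no new argument, since it comes from its exponential definition.
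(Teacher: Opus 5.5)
Your proposal is correct and is essentially the paper's argument. It uses the same Frenet system $\gamma'=e$, $e'=-c\gamma+\kappa N$, $N'=-\kappa e$ in the two-dimensional model (your $T$ is the paper's $e$), with $j_i$, $\mu_ij_i$ and $\lambda_ij_i$ realized as pairings of $\gamma,e,N$ against constant vectors. The one difference in route is that you fix $v_i$ by initial data and appeal to uniqueness for the linear system in $(j_i,j_i',\lambda_ij_i)$. The paper instead writes $v_i=j_i(-c\gamma+\mu_ie+\lambda_iN)$ and checks $v_i'=0$ directly from $q_i'=\mu_iq_i$. Your ``first integrals'' are more precisely the statement that $\operatorname{diag}(1/c,1,1)$ is the constant solution of the Gram equation $\mathsf G'=\mathsf M^{\mathsf T}\mathsf G+\mathsf G\mathsf M$; the individual products are not conserved off the constraint set.

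Your $v_i$ is the negative of the paper's. Replace it by $-v_i$ to obtain exactly the displayed conventions of \eqref{eq:scalar-focal}: $j_i=-\ip{v_i}{\gamma}$ (or $1-\ip{v_i}{\gamma}$ with $\gamma(0)=0$ when $c=0$), $h_i=\ip{v_i}{e}=\mu_ij_i$ and $\ell_i=\ip{v_i}{N}=\lambda_ij_i$. The norm identity \eqref{eq:focal-norm} is unaffected by this sign flip.
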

\begin{proof}
For $c=0$, solve in an oriented Euclidean plane
\[
 \gamma'=e,\qquad e'=\kappa N,\qquad N'=-\kappa e,
 \qquad\gamma(0)=0
\]
with an orthonormal initial frame. For $c\ne0$, use the standard two-dimensional space-form model in $\R^3$ for $c>0$, or in Lorentz $\R^{2,1}$ for $c<0$, and solve
\begin{equation}\label{eq:spaceform-frame}
 \gamma'=e,\qquad e'=-c\gamma+\kappa N,\qquad N'=-\kappa e.
\end{equation}
Choose the initial data with $\ip\gamma\gamma=1/c$, $\ip ee=\ip NN=1$, and zero mixed products; in the hyperbolic case choose the future sheet. For example, if $\mathsf F=(\gamma,e,N)$ is the coordinate matrix and $\mathsf G$ its Gram matrix, then
\[
 \mathsf F'=\mathsf F\mathsf M,\qquad
 \mathsf M=\begin{pmatrix}0&-c&0\\1&0&-\kappa\\0&\kappa&0\end{pmatrix},
 \qquad \mathsf G'=\mathsf M^{\mathsf T}\mathsf G+\mathsf G\mathsf M.
\]
The constant matrix $\operatorname{diag}(1/c,1,1)$ solves the last equation, so linear uniqueness preserves the required Gram identities. Linear ODE existence on each compact time interval gives the frame on $[0,b)$. Thus $\gamma$ is a unit-speed curve in the model, not a curve asserted to lie in the original hypersurface.

For $c=0$, define $v_i=j_i(\mu_ie+\lambda_iN)$. For $c\ne0$, define
\begin{equation}\label{eq:focal-vector}
 v_i=j_i(-c\gamma+\mu_i e+\lambda_iN).
\end{equation}
Here is the cancellation that gives conservation. Set
$q_i=-c\gamma+\mu_i e+\lambda_iN$, omitting the zero radial term in the flat case. The frame equations give
\[
 \begin{aligned}
 q_i'&=-c\mu_i\gamma+(\mu_i'-c-\kappa\lambda_i)e
                      +(\lambda_i'+\kappa\mu_i)N\\
     &=-c\mu_i\gamma+\mu_i^2e+\mu_i\lambda_iN=\mu_iq_i.
 \end{aligned}
\]
Thus $v_i'=j_i'q_i+j_iq_i'=0$, because $j_i'=-\mu_i j_i$. Set
\[
 h_i=\ip{v_i}e,\qquad \ell_i=\ip{v_i}N.
\]
For $c\ne0$, taking the scalar product of \eqref{eq:focal-vector} with $\gamma$ gives $\ip{v_i}{\gamma}=-j_i$. For $c=0$, the derivative of $j_i+\ip{v_i}{\gamma}$ is $-\mu_i j_i+h_i=0$; its initial value is $1$. Consequently $j_i=1-\ip{v_i}\gamma$ in the flat case and $j_i=-\ip{v_i}\gamma$ otherwise. In every case,
\begin{equation}\label{eq:scalar-focal}
 \lambda_i=\frac{\ell_i}{j_i},\quad
 \mu_i=\frac{h_i}{j_i},\quad
 j_i'=-h_i,\quad h_i'=cj_i+\kappa\ell_i,\quad
 \ell_i'=-\kappa h_i,\quad j_i''=-cj_i-\kappa\ell_i.
\end{equation}
For $c\ne0$, the first three formulas also imply
\begin{equation}\label{eq:focal-norm}
 h_i^2+\ell_i^2=a_i^2-cj_i^2,
 \qquad a_i^2=\ip{v_i}{v_i}.
\end{equation}
The same identity holds for $c=0$, with $a_i^2=|v_i|^2$. We do not assume $a_i^2>0$ for an arbitrary hyperbolic factor.

\end{proof}

\subsection{Curvature divergence}
\begin{lemma}[Curvature divergence]\label{lem:end-curvature}
Along the hypothetical maximal trajectory, $\kappa(s)\to+\infty$ as $s\uparrow b$.
\end{lemma}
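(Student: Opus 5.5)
Since $\kappa'>0$ on $[0,b)$ by Lemma~\ref{lem:flux}, $\kappa$ increases to a limit $\kappa_\infty\in(\kappa(0),\infty]$. We assume $\kappa_\infty<\infty$ and derive a contradiction, splitting into the two bounded cases of the table: $b<\infty$ and $b=\infty$.

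\emph{Case $b<\infty$.} First we show the state stays bounded unless some focal factor vanishes. The trace gives $\sum_i\lambda_i=-\Theta(\kappa)$, which is bounded. By Lemma~\ref{lem:end-focal}, $j_i,h_i,\ell_i$ solve a linear system with coefficients $c,\kappa$ bounded on $[0,b)$. Standard theorem~\ref{std:Gronwall} then gives finite limits $j_i(b),h_i(b),\ell_i(b)$. If every $j_i(b)>0$, then $\lambda_i=\ell_i/j_i$ and $\mu_i=h_i/j_i$ stay bounded. Since $\Theta(\kappa)\ge\Theta(\kappa(0))>0$, the state stays in a compact subset of $\Omega$, and Lemma~\ref{lem:analytic} extends it past $b$, contradicting maximality. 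Hence some $j_i(b)=0$, and so $J(s)\to0$.

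Next we estimate how fast $J$ vanishes. The factors $j_i$ are $C^1$ up to $b$, so each vanishing factor satisfies $j_i(s)\le C(b-s)$, while the nonvanishing ones stay bounded. Thus $J\le C(b-s)$. The flux bound $y'\ge y'(0)/J\ge c'/(b-s)$ is then not integrable, so $y\to\infty$. But $y=Y(\kappa)\le Y(\kappa_\infty)<\infty$, a contradiction.

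There is one delicacy. If $j_i(b)=0$ and also $h_i(b)=0$, then \eqref{eq:focal-norm} forces $a_i^2=0$. In that case $j_i$ vanishes to higher order, which only makes $J$ smaller, so the argument is unaffected.

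\emph{Case $b=\infty$.} Now $y$ increases to the finite limit $Y(\kappa_\infty)$, so $y'\in L^1(0,\infty)$. Combined with $y'J\ge y'(0)>0$, this gives $\int_0^\infty J^{-1}<\infty$, so $J$ must grow. We intend to contradict this with a lower barrier for the Riccati equations $\mu_i'=\mu_i^2+\kappa\lambda_i+c$, which show that $\mathsf T=-J'/J$ cannot stay very negative.

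Alternatively, one can argue in the focal picture. Each $j_i$ solves the linear equation $j_i''=-cj_i-\kappa\ell_i$ with $\ell_i'=-\kappa h_i$, and \eqref{eq:focal-norm} bounds $h_i,\ell_i$ in terms of $j_i$. The aim is to show $J$ grows at most polynomially. Then $\int J^{-1}$ would diverge once the growth is at most linear, and otherwise a separate argument must bound it.

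A more robust route goes back to the normal equation. Because $\kappa$ converges and $\kappa'\in L^1$, the potential satisfies $\kappa^2+\mathsf Q-\Lambda\ge\kappa_\infty^2-\kappa(0)^2+o(1)\ge\delta>0$ eventually. This uses $\mathsf Q\ge\Theta(\kappa)^2/d$ and the strict increase of $G$ above $K_\Lambda$. Then $(y'J)'\ge\delta yJ$, and we want to pair this with $y'\to0$ to force a contradiction. The danger is that $J$ could decay and absorb the growth of $y'J$.

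\emph{Expected obstacle.} The $b=\infty$ case is the hardest step: one needs a barrier for $\mathsf T$ showing $J$ cannot grow fast enough to make $\int J^{-1}$ finite while $y$ stays bounded. I would first try a comparison of each $\mu_i$ with the Riccati solution of $\mu'=\mu^2+c-\kappa_\infty\Theta(\kappa_\infty)$, using $\kappa\lambda_i\ge-\kappa\Theta(\kappa)$. This holds whenever all $\lambda_i\le0$; in general it must be replaced by the trace-summed inequality for $\mathsf T$ via Cauchy--Schwarz, $\mathsf T'\ge\mathsf T^2/d-\kappa\Theta(\kappa)+dc$. A lower barrier of this form shows that $\mathsf T$ is eventually bounded below by a negative constant or blows up to $+\infty$ in finite time. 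Finite-time blow-up is excluded because $b=\infty$ and the state is defined for all time, and the remaining alternative must then be fed back into $y'J\ge y'(0)$ to force $y'$ to stay bounded below by a positive constant. Ruling out exponential growth of $J$ through this inequality is the point I am least sure closes without further input.
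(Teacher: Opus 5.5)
Your case $b<\infty$ is essentially the paper's argument. The focal data have finite limits. If every $j_i(b)>0$, the state stays compact in $\Omega$ and extends. Otherwise $j_i(s)=\int_s^bh_i\le C(b-s)$, so $J\le C(b-s)$, and the flux bound $y'\ge y'(0)/J$ makes $y$ unbounded. Your side remark there is inaccurate: $j_i(b)=h_i(b)=0$ gives $\ell_i(b)^2=a_i^2$ by \eqref{eq:focal-norm}, not $a_i^2=0$. It is also unnecessary, because the bound $j_i\le C(b-s)$ holds whatever the order of vanishing.

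The genuine gap is the case $b=\infty$, which you leave open. You already have both ingredients.
\begin{itemize}
\item The barrier $\mathsf T\ge-C_1$. Take $C_0\ge\kappa_\infty\Theta(\kappa_\infty)+d|c|$, then choose $C_1$ with $\mathsf T(0)>-C_1$ and $C_1^2/d>C_0$. At $\mathsf T=-C_1$ one has $\mathsf T'\ge\mathsf T^2/d-C_0>0$, so no first downward crossing is possible.
\item A positive lower bound for the forcing. For $s\ge s_1>0$, $y(\kappa^2+\mathsf Q-\Lambda)\ge Y(\kappa(0))\,G(\kappa(s_1))=:c_*>0$.
\end{itemize}
You then try to feed the barrier into $y'J\ge y'(0)$, which discards the forcing. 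Since $J(0)=1$ and $J'=-\mathsf TJ\le C_1J$, this yields only $y'\ge y'(0)e^{-C_1s}$. That bound is integrable, so it gives no contradiction. There is also no need to exclude exponential growth of $J$.

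The missing step is to use the barrier inside the normal equation as a bounded damping coefficient. With $w=y'>0$,
\[
 w'=\mathsf T w+y(\kappa^2+\mathsf Q-\Lambda)\ge -C_1w+c_*\qquad(s\ge s_1).
\]
Hence $w(s)\ge c_*\bigl(1-e^{-C_1(s-s_1)}\bigr)/C_1\ge c_*/(2C_1)$ for large $s$. Then $y\to\infty$, contradicting $y\le Y(\kappa_\infty)$.

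The same step works in your flux form. From $(y'J)'\ge c_*J$ and $J(u)\ge J(s)e^{-C_1(s-u)}$ for $u\le s$,
\[
 y'(s)\ge \frac{c_*}{J(s)}\int_{s_1}^sJ(u)\dd u\ge \frac{c_*}{C_1}\bigl(1-e^{-C_1(s-s_1)}\bigr).
\]
So the growth of $J$ you worried about is exactly what the barrier controls. It only has to be paired with the forcing term, not with the initial flux.
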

\begin{proof}
By strict monotonicity, $\kappa$ has a limit in $(0,\infty]$. Suppose it is finite. If $b<\infty$, the coefficients of the frame equation are bounded and the frame extends continuously to $b$. Each $j_i$, being affine or linear in $\gamma$, has a nonnegative finite limit. If all limits are positive, \eqref{eq:scalar-focal} bounds all state variables. Their bounded set stays away from $\partial\Omega$, so Lemma~\ref{lem:analytic} contradicts maximality. If instead $j_i\to0$ for some index, $h_i$ is bounded and
\[
 0<j_i(s)=\int_s^b h_i(u)\dd u\le C(b-s).
\]
The other factors are bounded above, hence $J(s)\le C_1(b-s)$. The positive flux in \eqref{eq:flux-positive} yields $y'(s)\ge C_2/(b-s)$. For a fixed terminal time $s_0<b$ this implies
\[
 y(s)-y(s_0)\ge C_2\log\frac{b-s_0}{b-s}\longrightarrow+\infty,
\]
contrary to boundedness of $y=Y(\kappa)$.

If $b=\infty$ and $\kappa\le K_0$, summing the Riccati equations gives
\begin{equation}\label{eq:T-barrier}
 \mathsf T'=\sum_i\mu_i^2-\kappa\Theta(\kappa)+dc
 \ge\frac{\mathsf T^2}{d}-C_0.
\end{equation}
Choose $C_1>0$ so that $\mathsf T(0)>-C_1$ and $C_1^2/d>C_0$. At $\mathsf T=-C_1$ the derivative is strictly positive, so a first downward crossing is impossible. Thus $\mathsf T\ge-C_1$. Choose any $s_1>0$. Since $\kappa(s_1)>\kappa(0)\ge K_\Lambda$, the increasing function $G$ satisfies $G(\kappa(s))\ge G(\kappa(s_1))>0$ for $s\ge s_1$. With $w=y'>0$ the normal equation implies
\[
 w'\ge-C_1w+c_*,\qquad c_*:=Y(\kappa(0))G(\kappa(s_1))>0.
\]
Multiplication by $e^{C_1s}$ and integration from $s_1$ to $s$ give
\[
 w(s)\ge e^{-C_1(s-s_1)}w(s_1)
       +\frac{c_*}{C_1}\bigl(1-e^{-C_1(s-s_1)}\bigr).
\]
In particular $w(s)\ge c_*/(2C_1)$ for all sufficiently large $s$. Integrating once more makes $y$ unbounded, again a contradiction. We have proved
\begin{equation}\label{eq:k-divergence}
 \kappa(s)\longrightarrow+\infty\quad(s\uparrow b).
\end{equation}

\end{proof}

\subsection{Shrinking centers and active factors}
\begin{lemma}[Shrinking centers and active factors]\label{lem:end-centers}
On a terminal interval the auxiliary curve has osculating centers $C(s)$ and radii $\rho(s)\downarrow0$ satisfying \eqref{eq:center-bound}--\eqref{eq:terminal-center}. The active set $I_*$ is nonempty, and each active focal vector has positive squared model norm.
\end{lemma}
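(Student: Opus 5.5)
The plan is to build everything from the auxiliary frame of Lemma~\ref{lem:end-focal} together with the divergence $\kappa\to+\infty$ of Lemma~\ref{lem:end-curvature}. Because $\kappa'>0$ by Lemma~\ref{lem:flux}, the curvature is strictly increasing. So on a terminal interval $[s_1,b)$ I can define the osculating radius $\rho(s)\in(0,\infty)$ by the model circle relation $\kappa=\cot_c\rho$, where $\cot_c$ is $1/\rho$, $\sqrt c\cot(\sqrt c\rho)$ or $a\coth(a\rho)$. For $c>0$ I take $s_1$ large enough that $\kappa>0$ puts $\rho$ in $(0,\pi/(2\sqrt c))$. Monotonicity of $\kappa$ and its divergence give $\rho\downarrow0$. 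The center is
\[
 C=\gamma+\rho N\ \ (c=0),\qquad
 C=\operatorname{cs}_c(\rho)\,\gamma+\operatorname{sn}_c(\rho)\,N\ \ (c\ne0),
\]
with the usual generalized sine and cosine.

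First step (center bound). Differentiate $C$ using \eqref{eq:spaceform-frame}. The choice $\kappa=\cot_c\rho$ is exactly what cancels the $e$-components, which is the Tait--Kneser computation. One is left with $C'=\rho'\,\xi$, where $\xi$ is a unit model vector (in the flat case $\xi=N$). Hence the model distance between centers satisfies
\[
 \dist(C(s),C(t))\le\rho(s)-\rho(t)\qquad(s_1\le s\le t<b).
\]
The centers are therefore Cauchy and converge to a point $C_*$ with $\dist(C(s),C_*)\le\rho(s)$. Since $\dist(\gamma(s),C(s))=\rho(s)$, the terminal estimate $\dist(\gamma(s),C_*)\le2\rho(s)\to0$ follows. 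This gives $\gamma(s)\to C_*$ whether $b$ is finite or infinite. These are the bounds I expect \eqref{eq:center-bound}--\eqref{eq:terminal-center} to record. I will state them as model-distance inequalities, so that all three signs of $c$ are handled together.

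Second step (active set). Each $j_i$ is affine in $\gamma$: $j_i=1-\ip{v_i}\gamma$ or $j_i=-\ip{v_i}\gamma$, with $v_i$ constant. The convergence $\gamma\to C_*$ therefore gives limits $j_i\to j_i^*\ge0$. I define $I_*=\{i:j_i^*=0\}$, the indices whose focal point is $C_*$. To see that $I_*$ is nonempty, note that \eqref{eq:focal-norm} together with boundedness of $j_i$ bounds $h_i$ and $\ell_i$. If every $j_i^*>0$, then each $\lambda_i=\ell_i/j_i$ would be bounded, contradicting
\[
 \sum_i\lambda_i=-\Theta(\kappa)\le-a_-\kappa\to-\infty .
\]

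Third step (positive model norm). This is the delicate point, specifically the hyperbolic sign issue flagged after \eqref{eq:focal-norm}. For $i\in I_*$ I will pass to the limit in the defining relation of $j_i$. In the flat case this gives $\ip{v_i}{C_*}=1$, so $v_i\ne0$ and $a_i^2=|v_i|^2>0$. For $c\ne0$ it gives $\ip{v_i}{C_*}=0$, with $C_*$ on the model, so $\ip{C_*}{C_*}=1/c$. For $c>0$ this already gives $a_i^2>0$ once $v_i\ne0$. For $c<0$, $C_*$ is timelike, so its Lorentz-orthogonal complement is spacelike, and again $a_i^2>0$ once $v_i\ne0$. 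Nonvanishing of $v_i$ follows from $j_i(0)=1$, since $j_i=-\ip{v_i}\gamma$ would vanish identically if $v_i=0$. I expect the main obstacle to be ensuring that $C_*$ is a genuine point of the model, on the future sheet in the hyperbolic case, rather than a point escaping to infinity. The first step handles this, because the total center displacement is bounded by $\rho(s_1)<\infty$.
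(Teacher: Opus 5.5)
Your proposal is correct and follows the paper's proof essentially step by step: the same osculating radius $\kappa=C_c(\rho)/s_c(\rho)$ and center $C_c(\rho)\gamma+s_c(\rho)N$, and the unit-speed center derivative giving the Cauchy bound and a limit point in the complete model (your $C_*$ is the paper's $\gamma_*$). It also uses the same contradiction with $\sum_i\lambda_i=-\Theta(\kappa)\to-\infty$ for $I_*\ne\varnothing$, and positivity of $a_i^2$ from $\ip{v_i}{\gamma_*}=1$ or from $v_i\perp\gamma_*$ with $v_i\ne0$. The only cosmetic difference is that you get $v_i\ne0$ from $j_i(0)=1$ rather than from the initial formula \eqref{eq:focal-vector}, and this is equally valid.
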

\begin{proof}
Define the elementary curvature functions
\begin{equation}\label{eq:elementary-curvature}
 s_c(\rho)=\begin{cases}
 \sin(\sqrt c\,\rho)/\sqrt c,&c>0,\\
 \rho,&c=0,\\
 \sinh(\sqrt{-c}\,\rho)/\sqrt{-c},&c<0,
 \end{cases}
 \qquad C_c(\rho)=s_c'(\rho).
\end{equation}
They satisfy $C_c^2+cs_c^2=1$ and $C_c'=-cs_c$, so
\[
 \frac{\dd}{\dd\rho}\frac{C_c(\rho)}{s_c(\rho)}
 =-\frac{c s_c(\rho)^2+C_c(\rho)^2}{s_c(\rho)^2}
 =-\frac1{s_c(\rho)^2}.
\]
By \eqref{eq:k-divergence}, on a terminal interval there is a unique small positive $\rho$ with
\[
 \kappa=\frac{C_c(\rho)}{s_c(\rho)},\qquad
 \rho'=-\frac{\kappa'}{\kappa^2+c}<0,\qquad \rho\downarrow0.
\]
For $c<0$, \eqref{eq:k-divergence} ensures the required inequality $\kappa>\sqrt{-c}$ on this terminal interval.
Define the circle center by
\[
 C=C_c(\rho)\gamma+s_c(\rho)N\quad(c\ne0),
 \qquad C=\gamma+\rho N\quad(c=0).
\]
The radial geodesic formula gives $\dist(C,\gamma)=\rho$. Differentiation gives, in the nonflat case,
\[
 C'=\bigl(C_c(\rho)-\kappa s_c(\rho)\bigr)e
     +\rho'\{-c s_c(\rho)\gamma+C_c(\rho)N\}.
\]
The $e$ coefficient vanishes. The vector in braces has squared norm
$c s_c(\rho)^2+C_c(\rho)^2=1$. Thus
\begin{equation}\label{eq:center-derivative}
 C'=\rho'\{-cs_c(\rho)\gamma+C_c(\rho)N\},\qquad |C'|=-\rho'
\end{equation}
for $c\ne0$, and $C'=\rho'N$ for $c=0$. Consequently,
\begin{equation}\label{eq:center-bound}
 \dist(C(u),C(v))\le \rho(u)-\rho(v),\qquad u<v.
\end{equation}
Since $\rho\downarrow0$, \eqref{eq:center-bound} makes $C(s)$ a Cauchy curve at the terminal end, even before $b$ is known to be finite. The complete auxiliary model therefore contains a limit point $\gamma_*$; letting $v\uparrow b$ in that estimate gives the last inequality below. Also
$\dist(\gamma(s),\gamma_*)\le\rho(s)+\dist(C(s),\gamma_*)\le2\rho(s)$, so
\begin{equation}\label{eq:terminal-center}
 C(s)\to\gamma_*,\qquad\gamma(s)\to\gamma_*,\qquad
 \dist(C(s),\gamma_*)\le \rho(s).
\end{equation}
The model completeness used here is elementary: the sphere is compact, the plane is complete, and a closed hyperbolic ball is a closed bounded subset of the future hyperboloid in ambient coordinates. It is not completeness of $M$.

Each factor has a nonnegative limit. The active set
\[
 I_*:=\{i:j_i(s)\to0\}
\]
is nonempty. Indeed, if all limits were positive, \eqref{eq:focal-norm} would bound $h_i,\ell_i$, and hence all $\lambda_i$, contradicting $\sum\lambda_i=-\Theta(\kappa)\to-\infty$. An active flat vector is nonzero because $\ip{v_i}{\gamma_*}=1$. An active nonflat vector is nonzero by its initial formula \eqref{eq:focal-vector}, and $\ip{v_i}{\gamma_*}=0$; it is thus a nonzero tangent vector to the model at $\gamma_*$. In particular, for every active index,
\begin{equation}\label{eq:active-spacelike}
 a_i^2>0.
\end{equation}

\end{proof}

\subsection{Finite total curvature}
\begin{lemma}[Finite total curvature]\label{lem:end-turn}
The hypothetical trajectory has $\int_0^b\kappa(s)\dd s<\infty$ and $b<\infty$.
\end{lemma}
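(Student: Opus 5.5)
The plan is to assume $\int_0^b\kappa\,\dd s=\infty$ and derive a contradiction from the positivity of an active focal factor. Finiteness of $b$ is then immediate: by Lemma~\ref{lem:flux}, $\kappa\ge\kappa(0)>0$ on $[0,b)$, so $b\le\kappa(0)^{-1}\int_0^b\kappa<\infty$. All the work is in the first assertion, which is the ``support distance versus shrinking center'' alternative in the table.

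Fix an active index $i\in I_*$ from Lemma~\ref{lem:end-centers}, so $a_i^2>0$. In the flat case, $\ip{v_i}{\gamma_*}=1$ and $j_i=1-\ip{v_i}{\gamma}$ combine to give
\[
 j_i(s)=\ip{v_i}{\gamma_*-\gamma(s)}>0 .
\]
In the nonflat case, $\ip{v_i}{\gamma_*}=0$ gives $j_i(s)=-\ip{v_i}{\gamma(s)-\gamma_*}>0$. Thus $\gamma(s)$ stays in the open half-model cut out by the totally geodesic hypersurface through $\gamma_*$ orthogonal to the tangent vector $v_i$. Next write $\gamma=C-\rho N$ in the flat case, and use the corresponding radial geodesic formula $\gamma=C_c(\rho)C-s_c(\rho)\,\cdot$(unit tangent at $C$) otherwise. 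Then $\gamma_*-\gamma(s)=\rho(s)N(s)+w(s)$ with $w=\gamma_*-C(s)$, and \eqref{eq:terminal-center} gives $|w|\le\rho$.

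The key point is to upgrade this to the strict inequality $\dist(C(s),\gamma_*)<\rho(s)$. By \eqref{eq:center-derivative}, $C'$ is $\rho'$ times the transported normal $N$. Since $\kappa>0$, $N$ turns on every subinterval, so the displacement $C(v)-C(u)=\int_u^v\rho'N$ has length strictly smaller than $\int_u^v|\rho'|=\rho(u)-\rho(v)$. This is the strict Tait--Kneser nesting. Letting $v\uparrow b$ keeps the inequality strict, because the gain on a fixed subinterval $[u,u+\delta]$ survives the limit.

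Now use the turning hypothesis. If $\int\kappa=\infty$, the angle of $N(s)$ relative to a frame parallel along the convergent curve increases without bound. In the flat case this is literally the planar angle, since $N'=-\kappa e$. Hence there is a terminal time $s$ with $N(s)=-v_i/|v_i|$. At that time the strict inequality gives
\[
 j_i(s)=-\rho|v_i|+\ip{v_i}{w}\le |v_i|\,(|w|-\rho)<0,
\]
which contradicts $j_i>0$.

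The main obstacle I expect is the nonflat case. There one must replace ``$N(s)=-v_i/|v_i|$'' by a statement about the osculating disk. The disk of radius $\rho$ about $C(s)$ contains $\gamma_*$ strictly inside it, and it is tangent to the curve at $\gamma(s)$ with inward normal $N(s)$. When $N(s)$, transported to $\gamma_*$, equals the direction $-v_i$, the point $\gamma(s)$ is the extreme point of the disk on the wrong side of the hyperplane $\ip{v_i}{\cdot}=0$, forcing $j_i(s)<0$. I would first try to carry this out directly in the ambient coordinates of $\R^3$ or $\R^{2,1}$, using the formula for $C$ and the linearity of $j_i$ in $\gamma$. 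I would also define the turning angle by the angle between $v_i$ and the parallel transport of $N(s)$ to $\gamma_*$ along the short geodesic; this is well defined because $\dist(\gamma(s),\gamma_*)\le2\rho(s)\to0$, and it differs from the integral of $\kappa$ only by a bounded geodesic-curvature and holonomy error. Continuity of that angle then gives the required time $s$.
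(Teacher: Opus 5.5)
Your deduction of $b<\infty$ from $\kappa\ge\kappa(0)>0$ is correct, and so is your flat-case argument. In the plane it coincides with the paper's proof. The time with $N(s)=-v_i/|v_i|$ is exactly the paper's special time $h_i=0$, $\ell_i<0$, and $j_i\le|v_i|(|w|-\rho)$ is the support-distance estimate for the affine $D$ in \eqref{eq:support-distance}. The strict Tait--Kneser upgrade you call the key point is not needed, however. The non-strict bound $|w|\le\rho$ from \eqref{eq:terminal-center} already gives $j_i(s)\le0$, which contradicts $j_i>0$. The strictness comes from positivity of the focal factor, not from the nesting.

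The genuine gap is the curved case $c\ne0$, which the lemma must also cover. You assert that if the parallel transport of $N(s)$ to $\gamma_*$ equals $-v_i$, then $\gamma(s)$ is the extreme point of the osculating disk. This is false.

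\begin{itemize}
\item That extreme point is reached from $C(s)$ along the geodesic perpendicular to the line $\{\ip{v_i}{x}=0\}$. So $\gamma(s)$ is the extreme point exactly when $N(s)$ is the unit gradient of $D$ at $\gamma(s)$. Equivalently, the tangential projection $h_ie+\ell_iN$ of $v_i$ is a negative multiple of $N$, i.e.\ $h_i(s)=0$ and $\ell_i(s)<0$.
\item This is not your condition. On the unit sphere, let $\gamma(s)$ lie at distance $r$ from $\gamma_*$ along a geodesic making angle $\phi$ with that line. Then the two directions differ by an angle $\epsilon\approx\tfrac12r^2|\sin\phi\cos\phi|$, which is nonzero in general.
\item With this misalignment, the exact identity $D(C)=D(\gamma)+\rho$ degrades to $D(C)=D(\gamma)+\rho-O(\rho\epsilon^2)$. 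The bound $D(C)\le\rho$ then yields only $D(\gamma)=O(\rho\epsilon^2)$. That is no contradiction, since nothing bounds $j_i$ below in terms of $\rho$, and qualitative strict nesting does not supply such a bound.
\end{itemize}

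The repair is the paper's argument.

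\begin{itemize}
\item Let $\theta$ be the angle of $(h_i,\ell_i)$ in the moving frame. Then \eqref{eq:scalar-focal} and \eqref{eq:focal-norm} give the exact law \eqref{eq:phase-law}, whose correction term tends to zero. No holonomy estimate is required. Your bounded-error claim is in any case unproved before $b<\infty$ is known, though an $o(\kappa)$ error would suffice.
\item At a late time with $h_i=0$, $\ell_i<0$ (for $c>0$, late enough that $d_0+\rho<\pi/(2\sqrt c)$), write $j_i/a_i=s_c(d_0)$ and $-\ell_i/a_i=C_c(d_0)$. The addition formula gives $-\ip{v_i}{C}/a_i=s_c(d_0+\rho)$, i.e.\ $D(C)=d_0+\rho>\rho$.
\item In place of your Cauchy--Schwarz step, you need the global $1$-Lipschitz property of $D$, including near the spherical poles. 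It gives $D(C)\le\dist(C,\gamma_*)\le\rho$, which is the contradiction.
\end{itemize}
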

\begin{proof}
Fix an active vector $v$, omit its index, and set $a=\sqrt{\ip vv}>0$, as permitted by \eqref{eq:active-spacelike}. On a terminal interval, $a^2-cj^2>0$ and we can choose a continuous angle
\[
 h=\sqrt{a^2-cj^2}\cos\theta,\qquad
 \ell=\sqrt{a^2-cj^2}\sin\theta.
\]
The angle derivative can be computed without differentiating the square root:
\[
 \theta'=\frac{h\ell'-\ell h'}{h^2+\ell^2}
 =\frac{-\kappa h^2-\ell(cj+\kappa\ell)}{a^2-cj^2}.
\]
Thus \eqref{eq:scalar-focal} gives the exact formula
\begin{equation}\label{eq:phase-law}
 \theta'=-\kappa-\frac{cj\ell}{a^2-cj^2}.
\end{equation}
Its correction term tends to zero. If $\int_0^b\kappa=\infty$, then on a tail $\theta'\le-\kappa/2$ and $\theta\to-\infty$. At arbitrarily late times we would have $h=0$, $\ell=-\sqrt{a^2-cj^2}$.

Consider the signed support distance
\begin{equation}\label{eq:support-distance}
 D(x)=\begin{cases}
 \displaystyle\frac1{\sqrt c}\arcsin\!\left(\frac{\sqrt c(-\ip vx)}{a}\right),&c>0,\\[4pt]
 \displaystyle\frac{1-\ip vx}{a},&c=0,\\[4pt]
 \displaystyle\frac1{\sqrt{-c}}\operatorname{arsinh}\!\left(\frac{\sqrt{-c}(-\ip vx)}{a}\right),&c<0.
 \end{cases}
\end{equation}
It satisfies $D(\gamma_*)=0$ and $D(\gamma)>0$. We now justify its global $1$-Lipschitz property, including the spherical poles. Put $f=-\ip vx/a$ for $c\ne0$ and $f=(1-\ip vx)/a$ for $c=0$. For $c>0$, set
\[
 x_+=-\frac{v}{a\sqrt c}\in\Sph^2(c^{-1/2}).
\]
The spherical distance formula gives
\[
 \cos\!\bigl(\sqrt c\,\dist(x,x_+)\bigr)=\sqrt c\,f(x),
 \qquad
 D(x)=\frac{\pi}{2\sqrt c}-\dist(x,x_+).
\]
The second identity holds on the entire sphere, because
$\arcsin(\cos\theta)=\pi/2-\theta$ for $0\le\theta\le\pi$.
The triangle inequality therefore gives $|D(x)-D(y)|\le\dist(x,y)$ even at the poles. For $c<0$, orthogonal projection to the tangent space of the hyperboloid gives
\[
 \nabla f=-v/a-cf x,\qquad |\nabla f|^2=1-cf^2,
 \qquad \nabla D=\frac{\nabla f}{\sqrt{1-cf^2}}.
\]
Thus $|\nabla D|=1$ everywhere. Integrating along a minimizing model geodesic gives the same Lipschitz bound. For $c=0$, $D$ is affine with unit gradient, so the bound is immediate.

At a special time with $h=0$ and negative $\ell$, put $d_0=D(\gamma)$. Then $j/a=s_c(d_0)$ and $-\ell/a=C_c(d_0)$. The addition formula implies
\[
 f(C)=C_c(\rho)s_c(d_0)+s_c(\rho)C_c(d_0)=s_c(d_0+\rho).
\]
For $c>0$ choose the special time so late that $d_0+\rho<\pi/(2\sqrt c)$; for $c\le0$ there is no inverse-branch restriction. Thus $D(C)=d_0+\rho>\rho$. On the other hand, \eqref{eq:terminal-center} and the Lipschitz property give $D(C)\le \rho$. This contradiction proves
\begin{equation}\label{eq:finite-turn}
 \int_0^b\kappa(s)\dd s<\infty,\qquad b<\infty.
\end{equation}
The second assertion follows from $\kappa\ge\kappa(0)>0$.

\end{proof}

\subsection{Exact terminal coordinates}
The notation for the remaining argument is summarized below. The preceding lemma and the following proofs establish the listed limits as $s\uparrow b$.
\begin{center}
\small
\renewcommand{\arraystretch}{1.14}
\begin{tabular}{@{}p{.15\textwidth}p{.46\textwidth}p{.32\textwidth}@{}}
\toprule
Quantity & Definition or role & Terminal information \\
\midrule
$\zeta$ & $b-s$, remaining time & $\zeta\downarrow0$, $\kappa\zeta\to0$ \\
$\delta$ & $\int_s^b\kappa(u)\dd u$, remaining turn & $\delta\downarrow0$ \\
$U,V$ & Signed terminal position coordinates; see \eqref{eq:UPVB} & $U/\zeta\to1$, $0<V\le C\zeta\delta$ \\
$P,\mathsf B$ & Terminal normal components; see \eqref{eq:UPVB} & $P>0$, $P\to0$, $\mathsf B\to1$ \\
$\alpha_i,\beta_i$ & $v_i=\alpha_i e_*+\beta_iN_*$, $i\in I_*$ & $\alpha_i\ge0$, then $\alpha_i>0$ \\
$\eta_i,w$ & $\eta_i=\beta_i/\alpha_i$, $w=V/U$ & $\sum_{I_*}\eta_i=0$, $w\to0^+$ \\
\bottomrule
\end{tabular}
\end{center}
\begin{lemma}[Terminal frame and exact coordinates]\label{lem:end-terminal}
The terminal frame exists. With the coordinates in \eqref{eq:UPVB}, the estimates \eqref{eq:kt-small} and \eqref{eq:V-estimates} hold. Every active factor has the representation \eqref{eq:terminal-exact} with $\alpha_i\ge0$.
\end{lemma}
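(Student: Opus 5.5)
The plan is to build the terminal frame at $\gamma_*$ as the limit of the moving frame, then express each active factor exactly in geodesic coordinates adapted to that frame. By Lemma~\ref{lem:end-turn}, $b<\infty$ and $\int_0^b\kappa<\infty$. The frame system $\mathsf F'=\mathsf F\mathsf M$ from Lemma~\ref{lem:end-focal} is linear, and its coefficient satisfies $\|\mathsf M\|\le C(1+\kappa)$, which is integrable on $[0,b)$. Standard theorem~\ref{std:Gronwall} therefore gives finite limits $(\gamma,e,N)\to(\gamma_*,e_*,N_*)$. The Gram identities pass to the limit, so $(e_*,N_*)$ is an orthonormal basis of the model tangent plane at $\gamma_*$.

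Next introduce $\zeta=b-s$ and the remaining turn $\delta(s)=\int_s^b\kappa$, which tends to $0$. Define the terminal coordinates of $\gamma(s)$ by writing its position in the exponential chart at $\gamma_*$. I expect \eqref{eq:UPVB} to take the form $U=-\ip{\gamma-\gamma_*}{e_*}$ and $V=\ip{\gamma-\gamma_*}{N_*}$, suitably corrected by $s_c$ and $C_c$ in the nonflat case. It should also set $P=-\ip{e}{N_*}$ and $\mathsf B=\ip N{N_*}$.

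For the estimates, integrate the frame equations backward from $b$. The angle between $e(s)$ and $e_*$ is at most $\delta+O(\zeta)$, which gives $\mathsf B\to1$ and $P\to0$. Since $\gamma'=e$, integration gives $U=\zeta(1+o(1))$. Next, $V=\int_s^b\ip{e}{N_*}$, and $\ip{e(u)}{N_*}$ is bounded by $\delta(u)\le\delta(s)$; this gives $0\le V\le C\zeta\delta$.

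The signs are the delicate part. Positivity of $P$ and $V$ requires the turning to be monotone, so that $e$ rotates in one direction toward $e_*$, and this should follow from $\kappa>0$. To show $\kappa\zeta\to0$ I would use monotonicity: since $\kappa$ is increasing, $\kappa(s)\zeta\le\int_s^b\kappa=\delta\to0$. This is the estimate \eqref{eq:kt-small}, and it is immediate.

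For the exact representation, take an active index $i$. Lemma~\ref{lem:end-centers} shows that $v_i$ is a nonzero tangent vector at $\gamma_*$ in the nonflat case; in the flat case one has $\ip{v_i}{\gamma_*}=1$. Expand $v_i=\alpha_ie_*+\beta_iN_*$, plus a radial component when $c=0$. Then $j_i=-\ip{v_i}\gamma$, or $1-\ip{v_i}\gamma$ in the flat case, becomes an explicit function of $(U,V)$, namely $j_i=\alpha_i\,\tilde U+\beta_i\,\tilde V$ with $\tilde U=U+O(\cdot)$ and $\tilde V=V+O(\cdot)$ given by exact $s_c,C_c$ expressions. I expect this to be \eqref{eq:terminal-exact}, essentially $j_i=\alpha_iU+\beta_iV$ in the flat case.

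The sign $\alpha_i\ge0$ then follows by dividing by $U\sim\zeta$ and letting $s\uparrow b$. We have $j_i>0$ and $V/U\le C\delta\to0$, so $j_i/U\to\alpha_i$, and this limit is nonnegative.

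The main obstacle is making the nonflat coordinates exact rather than asymptotic, so that the later trace computation, which uses $\sum\eta_i=0$ and $w\to0^+$, can rely on identities rather than error terms. I would handle this by working in the ambient linear coordinates of $\R^3$ or $\R^{2,1}$, where $j_i$ is exactly linear in $\gamma$. The position $\gamma$ is written as $C_c(r)\gamma_*+s_c(r)(\cos\phi\, e_*+\sin\phi\, N_*)$, and $U,V$ are defined as the linear coefficients of $e_*$ and $N_*$. With this definition the representation is exact by construction, and only the estimates need asymptotics.
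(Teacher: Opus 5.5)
Your route agrees with the paper's in most steps: Gronwall on the linear frame system with integrable coefficient $C(1+\kappa)$, the monotonicity bound $\kappa\zeta\le\delta$, the choice of $U,V$ as linear ambient coefficients so that $j_i$ is exactly linear, and $j_i/U\to\alpha_i$ to get $\alpha_i\ge0$. The genuine gap is the sign content of \eqref{eq:V-estimates}, which you flag as delicate but do not prove.

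A bound on $|\ip{e}{N_*}|$ gives $|V|\le C\zeta\delta$ but no sign. Note also that $V(s)=-\int_s^b\ip{e}{N_*}\dd u$, since $V'=\ip{e}{N_*}$ and $V(b)=0$. In the curved case your pointwise bound also picks up an extra $O(b-u)$ term, which is harmless because $\zeta\le\delta/\kappa(0)$. ``Monotone turning of $e$ toward $e_*$'' is a planar argument and is valid only for $c=0$. In the curved models $e'=-c\gamma+\kappa N$, so $\tfrac{d}{ds}\ip{e}{N_*}=\kappa\mathsf B-cV$ contains a term of undetermined sign.

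The ingredient you are missing is the exact terminal problem $V''+cV=\kappa\mathsf B$, $V(b)=V'(b)=0$. Its solution is the kernel formula \eqref{eq:V-kernel}, and positivity of $s_c(u-s)$ together with $\mathsf B\to1$ gives $0<V\le C\zeta\delta$ at once. Alternatively, you could feed $|V|\le C\zeta\delta$ back into $-V'(s)=\int_s^b(\kappa\mathsf B-cV)\dd u\ge\tfrac12\delta-C\zeta^2\delta$, but this bootstrap has to be carried out. Likewise $P=\ip{N}{e_*}$ satisfies $P'=-\kappa\ip{e}{e_*}$ and $P(b)=0$, whence $\tfrac12\delta\le P\le2\delta$. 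Strict positivity is genuinely needed later: Lemma~\ref{lem:end-transverse} divides by $V$, and Lemma~\ref{lem:end-positive} uses $w=V/U>0$ and $P\ge\delta/2$.

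Second, the representation \eqref{eq:terminal-exact} has two parts, and you treat only the one for $j_i$. The formula for $\lambda_i$ comes from $\lambda_i=\ell_i/j_i$ with $\ell_i=\ip{v_i}{N}=\alpha_i\ip{N}{e_*}+\beta_i\ip{N}{N_*}$. It is therefore exact precisely when $P$ is defined as $\ip{N}{e_*}$. Your guess $P=-\ip{e}{N_*}$ agrees with this in the flat case, where $N$ is $e$ rotated by a right angle. In $\R^3$ or $\R^{2,1}$, however, the two quantities generally differ, so with your definition the $\lambda_i$ identity is not exact.

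Finally, with your own $U,V$ one has $j_i=\ip{v_i}{\gamma_*-\gamma}$ in the flat case and $j_i=-\ip{v_i}{\gamma}$ in the curved ones. Either way $j_i=\alpha_iU-\beta_iV$, not $\alpha_iU+\beta_iV$. This sign is what forces $\beta_i<0$ for tangential factors in Lemma~\ref{lem:end-transverse}.
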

\begin{proof}
Put $\zeta=b-s$ and $\delta(s)=\int_s^b\kappa(u)\dd u$.
Monotonicity yields
\begin{equation}\label{eq:kt-small}
 0<\kappa(s)\zeta\le\delta(s)\longrightarrow0.
\end{equation}
This estimate follows from integrability and monotonicity alone; it prescribes no asymptotic power or order of blow-up.
Regard all ambient coordinates of the frame as one finite-dimensional linear system, with an ordinary Euclidean norm on its coordinate entries even when the model scalar product is Lorentzian. Its coefficient norm is at most $C(1+\kappa)$. Since $b<\infty$ and \eqref{eq:finite-turn} holds,
\[
 \int_{s_0}^b C(1+\kappa)\dd s<\infty.
\]
Standard theorem~\ref{std:Gronwall} therefore gives a finite limit of the coordinate matrix. The Gram identities persist under this limit and give an orthonormal tangent pair $e_*,N_*$ at $\gamma_*$.
Define
\begin{equation}\label{eq:UPVB}
 U=\ip{\gamma_*-\gamma}{e_*},\quad
 V=\ip{\gamma-\gamma_*}{N_*},\quad
 P=\ip N{e_*},\quad \mathsf B=\ip N{N_*}.
\end{equation}
The same expressions apply in the flat plane. Since $e(s)\to e_*$ and $N(s)\to N_*$,
\[
 U(s)=\int_s^b\ip{e(u)}{e_*}\dd u,\qquad
 P(s)=\int_s^b\kappa(u)\ip{e(u)}{e_*}\dd u.
\]
The integrands contain a scalar product converging to $1$. Hence
$U/\zeta\to1$, $\mathsf B\to1$, and, on a sufficiently short terminal interval,
\[
 \tfrac12\zeta\le U\le2\zeta,\qquad
 \tfrac12\delta\le P\le2\delta.
\]
In particular $U,P>0$ and $P\to0$. Moreover, $V''+cV=\kappa \mathsf B$, $V(b)=V'(b)=0$. The exact terminal integral is
\begin{equation}\label{eq:V-kernel}
 V(s)=\int_s^b s_c(u-s)\kappa(u)\mathsf B(u)\dd u.
\end{equation}
To justify differentiation at this possibly singular endpoint, put $q=\kappa\mathsf B$. By \eqref{eq:finite-turn}, $q\in L^1(s_0,b)$ on a terminal interval. If $I(s)$ denotes the integral on the right, boundedness of the kernels on this interval and dominated convergence give
\[
 I'(s)=-\int_s^b C_c(u-s)q(u)\dd u,\qquad
 I''(s)+cI(s)=q(s),\qquad I(b)=I'(b)=0.
\]
The second derivative is taken at interior points, where $q$ is smooth. The difference $V-I$ solves the homogeneous scalar equation with zero terminal data, and hence vanishes. On a sufficiently short terminal interval its kernel and $\mathsf B$ are positive and $s_c(u-s)\le C(u-s)$. In particular,
\[
 V(s)\le C\int_s^b(u-s)\kappa(u)\dd u
       \le C\zeta\delta,\qquad
 \kappa(s)V(s)\le C(\kappa(s)\zeta)\delta\le C\delta^2.
\]
This proves
\begin{equation}\label{eq:V-estimates}
 0<V\le C\zeta\delta,\qquad V/\zeta\to0,\qquad
 0<\kappa V\le C\delta^2\to0.
\end{equation}
Formula \eqref{eq:V-kernel} uses the appropriate space-form kernel; for $c<0$ this is the hyperbolic sine kernel.

For an active index write $v_i=\alpha_i e_*+\beta_iN_*$. In the nonflat cases this uses precisely $v_i\perp\gamma_*$. The affine or linear representations then give
\begin{equation}\label{eq:terminal-exact}
 j_i=\alpha_i U-\beta_iV>0,\qquad
 \lambda_i=\frac{\alpha_iP+\beta_i\mathsf B}{\alpha_iU-\beta_iV}.
\end{equation}
Dividing the first equation by $\zeta$ and using $U/\zeta\to1$, $V/\zeta\to0$ gives $j_i/\zeta\to\alpha_i$. Since $j_i>0$, necessarily $\alpha_i\ge0$. No division by $\alpha_i$ has been made.
\end{proof}

\subsection{Excluding tangential active factors}
\begin{lemma}[Strictly transverse incidence]\label{lem:end-transverse}
Every active factor has $\alpha_i>0$ in \eqref{eq:terminal-exact}.
\end{lemma}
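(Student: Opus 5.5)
The plan is to argue by contradiction through the trace identity $\sum_i\lambda_i=-\Theta(\kappa)$, using the exact representation \eqref{eq:terminal-exact} together with the terminal estimates \eqref{eq:kt-small} and \eqref{eq:V-estimates}. Let $I_0=\{i\in I_*:\alpha_i=0\}$, and suppose $I_0\ne\emptyset$.

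\emph{Step 1: the tangential indices.} For $i\in I_0$, the focal vector $v_i=\beta_iN_*$ is nonzero by Lemma~\ref{lem:end-centers}, so $\beta_i\ne0$. The representation \eqref{eq:terminal-exact} reduces to
\[
 j_i=-\beta_iV>0,\qquad \lambda_i=\frac{\beta_i\mathsf B}{-\beta_iV}=-\frac{\mathsf B}{V}.
\]
Because $V>0$, this forces $\beta_i<0$. On a short terminal interval $\mathsf B\ge\tfrac12$, so every $i\in I_0$ contributes the same large negative amount, $\lambda_i\le-1/(2V)$. There is no cancellation among tangential indices, since all of them have the same sign.

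\emph{Step 2: the remaining indices.} For an inactive index, $j_i$ has a positive limit, and \eqref{eq:focal-norm} bounds $h_i,\ell_i$. Hence $\lambda_i=\ell_i/j_i$ is bounded. For an active index with $\alpha_i>0$, we have $j_i=\alpha_iU-\beta_iV\ge\tfrac12\alpha_i\zeta$ for small $\zeta$, because $U/\zeta\to1$ and $V/\zeta\to0$. Also $|\alpha_iP+\beta_i\mathsf B|\le C$, since $P\to0$ and $\mathsf B\to1$. Therefore $|\lambda_i|\le C/\zeta$. Combining with Step 1,
\[
 -\Theta(\kappa)=\sum_i\lambda_i\le-\frac{|I_0|}{2V}+\frac{C}{\zeta}.
\]

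\emph{Step 3: contradiction.} By the comparison $\Theta(\kappa)\le a_+\kappa$ and \eqref{eq:kt-small}, we have $\Theta(\kappa)\le a_+\delta/\zeta\le C/\zeta$. Substituting gives $|I_0|/(2V)\le C'/\zeta$, that is, $V\ge c\,\zeta$ with $c>0$. This contradicts $V/\zeta\to0$ from \eqref{eq:V-estimates}. Hence $I_0=\emptyset$, and $\alpha_i>0$ for every active index, using $\alpha_i\ge0$ from Lemma~\ref{lem:end-terminal}.

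The step needing the most care is Step 2. One must check that the bound $|\lambda_i|=O(1/\zeta)$ for the strictly transverse active indices is uniform, which requires a lower bound on $j_i/\zeta$ from $U/\zeta\to1$ and $V/\zeta\to0$. One must also check that the inactive indices stay bounded, which uses \eqref{eq:focal-norm}. Once these bounds hold, the orders of magnitude $1/V\gg1/\zeta\gtrsim\kappa$ separate and the trace identity fails.
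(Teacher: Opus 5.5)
Your proof is correct and follows the paper's argument. You split the indices into the same three classes, use the exact residue $\lambda_i=-\mathsf B/V$ on $I_0$, and play it against the trace via $\Theta(\kappa)\le a_+\kappa$, $\kappa\zeta\le\delta$ and $V/\zeta\to0$. The only difference is cosmetic: you use one-sided $O(1/\zeta)$ bounds, whereas the paper multiplies the trace by $V$ and takes termwise limits to obtain $-|I_0|=0$.
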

\begin{proof}
Let $I_0=\{i\in I_*:\alpha_i=0\}$. We use the already constructed coordinate $V$, whose positivity and smallness are given by \eqref{eq:V-estimates}.
For $i\in I_0$, the nonzero active vector has $\beta_i\ne0$ and the exact coordinate formulas reduce to
\[
 j_i=-\beta_iV>0,\qquad
 \lambda_i=\frac{\beta_i\mathsf B}{-\beta_iV}
           =-\frac{\mathsf B}{V}.
\]
Thus $\beta_i<0$, and, without any assumption on the zero order of $j_i$,
\begin{equation}\label{eq:tangential-residue}
 V\lambda_i=-\mathsf B\longrightarrow-1\qquad(i\in I_0).
\end{equation}
If $i\in I_*\setminus I_0$, then $\alpha_i>0$, and \eqref{eq:terminal-exact} gives
\[
 \zeta\lambda_i\longrightarrow\beta_i/\alpha_i.
\]
Because $V/\zeta\to0$, it follows that $V\lambda_i\to0$ for these indices. For an inactive index, $j_i$ has a positive limit and $\ell_i$ has a finite limit, so $\lambda_i=\ell_i/j_i$ is bounded and again $V\lambda_i\to0$.
Now multiply the full trace equation by $V$:
\[
 \sum_{i=1}^d V\lambda_i=-V\Theta(\kappa).
\]
The right side tends to zero, since $0<V\Theta(\kappa)\le a_+\kappa V\to0$. The left side tends to $-|I_0|$ by the preceding limits. The sum is finite and includes all repeated factors, so these limits may be taken term by term. Hence $|I_0|=0$, which proves $\alpha_i>0$ for every active index.
\end{proof}

\subsection{Positive complete active trace}
\begin{lemma}[Positive complete active trace]\label{lem:end-positive}
The terminal representations force \eqref{eq:positive-trace} and the quantitative
bound \eqref{eq:active-trace-growth}. Consequently the trace law would bound
$\kappa$ above, contrary to \eqref{eq:k-divergence}.
\end{lemma}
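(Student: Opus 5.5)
The plan is to sum the exact representations \eqref{eq:terminal-exact} over the active set and to use the trace law twice. The first use identifies the leading order of the active trace. The second use turns positivity of that trace into a contradiction. By Lemma~\ref{lem:end-transverse} every $\alpha_i>0$ for $i\in I_*$, so we may divide by $\alpha_i$. With $\eta_i=\beta_i/\alpha_i$ and $w=V/U$ this gives
\[
 \lambda_i=\frac{P+\eta_i\mathsf B}{U(1-\eta_iw)},\qquad i\in I_*.
\]
Here $1-\eta_iw=j_i/(\alpha_iU)>0$ exactly, by \eqref{eq:terminal-exact}. By \eqref{eq:V-estimates} and $U/\zeta\to1$ we also have $w\to0^+$.

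\emph{Step 1: the residues sum to zero.} Multiply the full trace equation by $\zeta$. The right side satisfies $0<\zeta\Theta(\kappa)\le a_+\kappa\zeta\to0$ by \eqref{eq:kt-small}. Inactive $\lambda_i$ are bounded, as noted in the proof of Lemma~\ref{lem:end-transverse}, so $\zeta\lambda_i\to0$ for those indices. For active indices $\zeta\lambda_i\to\eta_i$, using $P\to0$, $\mathsf B\to1$, $U/\zeta\to1$ and $w\to0$. The sum is finite and counts multiplicity, so passing to the limit term by term yields
\[
 \sum_{i\in I_*}\eta_i=0.
\]

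\emph{Step 2: algebraic expansion of the active trace.} Using $1/(1-\eta_iw)=1+\eta_iw/(1-\eta_iw)$, rewrite the active sum as
\[
 U\sum_{i\in I_*}\lambda_i
 =P\Bigl(|I_*|+\sum_{I_*}\frac{\eta_iw}{1-\eta_iw}\Bigr)
   +\mathsf B\sum_{I_*}\eta_i
   +\mathsf B\,w\sum_{I_*}\frac{\eta_i^2}{1-\eta_iw}.
\]
The middle term vanishes by Step 1. The last term is nonnegative, since $\mathsf B>0$ near $b$, $w>0$ and $1-\eta_iw>0$. On a short terminal interval $|\eta_iw|\le\tfrac12$, so the first bracket is at least $|I_*|/2$. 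This gives the positive trace \eqref{eq:positive-trace}:
\[
 \sum_{i\in I_*}\lambda_i\ \ge\ \frac{|I_*|P}{2U}>0.
\]
Using $P\ge\delta/2$ and $U\le2\zeta$ from Lemma~\ref{lem:end-terminal}, it also gives the growth bound \eqref{eq:active-trace-growth}, namely a lower bound $\ge |I_*|\delta/(8\zeta)>0$. Nonemptiness of $I_*$ (Lemma~\ref{lem:end-centers}) makes the bound strict.

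\emph{Step 3: contradiction.} The inactive $\lambda_i$ are bounded, so $\sum_{i=1}^d\lambda_i\ge-C$ on a terminal interval. The trace law then gives $\Theta(\kappa)\le C$. Hence $a_-\kappa\le C$, so $\kappa$ is bounded, contrary to \eqref{eq:k-divergence}. This excludes the last alternative in the table and completes Theorem~\ref{thm:endpoint}.

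The main obstacle is the sign-indefinite cross term $\mathsf B\sum\eta_i/U$. Individually it is of order $1/\zeta$ and could overwhelm the positive $P/U$ term. The plan handles it by first extracting $\sum\eta_i=0$ from the trace law at order $\zeta^{-1}$ (Step 1), which is possible only because $\kappa\zeta\to0$. After that, the exact form of the second-order correction $w\sum\eta_i^2/(1-\eta_iw)$ shows it is nonnegative rather than merely small. The positivity $1-\eta_iw>0$, taken directly from $j_i>0$, is what avoids any asymptotic assumption here.
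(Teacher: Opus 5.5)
Your proposal is correct and follows the paper's proof essentially step for step. You first obtain the residue identity $\sum_{I_*}\eta_i=0$ by multiplying the trace by $\zeta$, and then the identity $\eta/(1-\eta w)=\eta+\eta^2w/(1-\eta w)$ gives exactly the paper's formula \eqref{eq:positive-trace}. The only differences are cosmetic: the paper regroups as $P+(Pw+\mathsf B)\eta_i/(1-\eta_iw)$ to get $\sum_{I_*}\lambda_i\ge mP/U\ge\frac m4\kappa$ without needing $|\eta_iw|\le\frac12$ (your route gives the harmless constant $\frac m8$), and your Step~3 correctly observes that positivity of the active sum already contradicts $\Theta(\kappa)\ge a_-\kappa\to\infty$.
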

\begin{proof}
Set $\eta_i=\beta_i/\alpha_i$ for $i\in I_*$ and $w=V/U>0$ near $b$. For every active index, \eqref{eq:terminal-exact} gives $\zeta\lambda_i\to\eta_i$; the inactive terms are bounded. Meanwhile
\[
 \left|\zeta\sum_i\lambda_i\right|
 =\zeta\Theta(\kappa)\le a_+\zeta\kappa\longrightarrow0.
\]
Taking the limit of the finite sum therefore gives
\begin{equation}\label{eq:eta-sum}
 \sum_{i\in I_*}\eta_i=0.
\end{equation}
Since $j_i=\alpha_iU(1-\eta_iw)>0$, all denominators below are positive. The elementary identity
\[
 \frac{\eta}{1-\eta w}=\eta+\frac{\eta^2w}{1-\eta w}
\]
and \eqref{eq:eta-sum} imply
\[
 \sum_{i\in I_*}\frac{\eta_i}{1-\eta_iw}
 =w\sum_{i\in I_*}\frac{\eta_i^2}{1-\eta_iw}.
\]
Indeed, after dividing numerator and denominator in \eqref{eq:terminal-exact} by $\alpha_iU$,
\[
 \lambda_i=\frac1U\frac{P+\eta_i\mathsf B}{1-\eta_iw}.
\]
Summing these expressions gives the exact equality
\begin{equation}\label{eq:positive-trace}
 \sum_{i\in I_*}\lambda_i
 =\frac{P}{U}\sum_{i\in I_*}\frac1{1-\eta_iw}
  +\frac{\mathsf B w}{U}\sum_{i\in I_*}\frac{\eta_i^2}{1-\eta_iw}>0.
\end{equation}
The first term is strictly positive because $I_*\ne\varnothing$, $P>0$, $U>0$,
and every denominator $1-\eta_iw$ is positive. The second term is nonnegative
because $\mathsf B>0$ and $w>0$. These are assertions about the complete active
sum; no sign is imposed on each individual $\lambda_i$.

For a quantitative contradiction, put $m=|I_*|\ge1$. The identity
\[
 \frac{P+\eta_i\mathsf B}{1-\eta_iw}
 =P+(Pw+\mathsf B)\frac{\eta_i}{1-\eta_iw}
\]
rewrites the same complete sum as
\[
 \sum_{i\in I_*}\lambda_i
 =\frac{mP}{U}
  +\frac{w(Pw+\mathsf B)}{U}
    \sum_{i\in I_*}\frac{\eta_i^2}{1-\eta_iw}
 \ge\frac{mP}{U}.
\]
On a sufficiently short terminal interval, the bounds in
Lemma~\ref{lem:end-terminal} give $P\ge\delta/2$, $U\le2\zeta$ and
$\delta\ge\kappa\zeta$. Thus
\begin{equation}\label{eq:active-trace-growth}
 \sum_{i\in I_*}\lambda_i\ge\frac m4\kappa.
\end{equation}
The inactive sum is bounded, say its absolute value is at most $C$ (take $C=0$
when no inactive index exists). The trace and \eqref{eq:abstract-comparison}
would now give
\[
 -\Theta(\kappa)=\sum_i\lambda_i\ge\frac m4\kappa-C,
 \qquad
 \left(a_-+\frac m4\right)\kappa\le C.
\]
This contradicts $\kappa\to\infty$. No asymptotic expansion or assumed blow-up
order has entered the argument.
\end{proof}

\begin{proof}[Proof of Theorem~\ref{thm:endpoint}]
Any interval satisfying the hypotheses gives the hypothetical maximal state used above. Lemma~\ref{lem:end-curvature} excludes both bounded-curvature endpoint alternatives. Lemmas~\ref{lem:end-centers} and \ref{lem:end-turn} then give a finite endpoint with finite total curvature. Lemmas~\ref{lem:end-terminal}, \ref{lem:end-transverse} and \ref{lem:end-positive} contradict the trace on the only remaining alternative. Thus no proposed interval exists.
\end{proof}

\begin{remark}
The terminal argument uses exact focal coordinates and the estimates \eqref{eq:kt-small} and \eqref{eq:V-estimates}. It applies to repeated focal factors and to arbitrary positive real power exponents without a prescribed asymptotic expansion. The final trace identity is common to all three geometries, with the space-form dependence contained in the kernel that produces $V$. Multiplication by the terminal coordinate $V$ excludes tangential contact through the exact residue $V\lambda_i=-\mathsf B$, without an auxiliary integral or a prescribed zero order. The available positive flux is $y^{\prime}J=Y^{\prime}(\kappa)\kappa^{\prime}J$, and no upper growth bound for $Y^{\prime}$ at infinity is required. In the curved models, $\int\kappa$ is total geodesic curvature, not an angle change in a fixed ambient frame; the correction in \eqref{eq:phase-law} is retained throughout.
\end{remark}

\section{Consequences in Euclidean, spherical and hyperbolic space}\label{sec:applications}
\subsection{Returning from the ODE to the hypersurface}
\begin{proof}[Proof of Theorem~\ref{thm:nonlinear}]
Suppose first $n\ge2$. On any nonzero-mean-curvature neighborhood with $d\tau\ne0$, choose the normal with $\tau<0$ and restrict to a regular spectral neighborhood as in Section~\ref{sec:geometry}. Choose the direction of the gradient curve so that $\kappa'>0$. The geometric equations supply an open interval solution of \eqref{eq:pairs}, \eqref{eq:full-ode}.

If $\Lambda\le0$, every positive initial value is supercritical in the sense of \eqref{eq:supercritical}. Theorem~\ref{thm:endpoint} rules out such an interval. A nonzero locally constant $\tau$ would give $|A|^2=\Lambda\le0$, which is also impossible. If the nonzero-mean-curvature set were nonempty, either its mean curvature would be locally constant everywhere on a connected coordinate component, or there would be a point with $d\tau\ne0$. Both alternatives are excluded. Thus the immersion is minimal.

\emph{Strictness at arbitrary gradient points.}
If $\Lambda>0$, Theorem~\ref{thm:endpoint} excludes $\kappa\ge K_\Lambda$
at a regular spectral point with $d\kappa\ne0$. Now let $q$ be any point of a
nonzero-amplitude neighborhood with $d\kappa(q)\ne0$, without a multiplicity
assumption. If $\kappa(q)>K_\Lambda$, both strict exceedance and nonzero
derivative persist on an open neighborhood. Lemma~\ref{lem:gradient} supplies
a regular spectral neighborhood inside it, a contradiction. If instead
$\kappa(q)=K_\Lambda$, follow the local gradient flow $\eta$ with
$\eta(0)=q$ and $\eta'=\nabla\kappa$. On a sufficiently short interval,
\[
 \frac{\dd}{\dd t}\kappa(\eta(t))=|\nabla\kappa(\eta(t))|^2>0.
\]
Positive time therefore produces the already excluded strict exceedance.
This use of the nonzero derivative, not continuity alone, proves strictness.
Since $k_F$ is strictly increasing, the resulting inequality is exactly
$|\tau|<t_F(\Lambda)$ at every nonzero-amplitude gradient point. At a
zero-mean-curvature point the asserted numerical inequality is automatic.

\emph{From one strict exceedance to a CMC component.}
Work on a connected component of $M$ and put $h_*=t_F(\Lambda)/n$.
The scalar $|H|$ is globally defined and is smooth on
$\mathcal U=\{|H|>h_*\}$. The gradient-point bound implies $d|H|=0$ on
$\mathcal U$, so every connected component $\mathcal U_0$ of $\mathcal U$
has one constant value $h_0>h_*$. Such a component is open because manifolds
are locally connected. If $q\in\overline{\mathcal U_0}$, continuity gives
$|H(q)|=h_0>h_*$. Choose a connected coordinate neighborhood
$B\subset\mathcal U$ of $q$. Since $B\cap\mathcal U_0\ne\varnothing$,
connectedness places $B$ in the same component $\mathcal U_0$.
Thus $q\in\mathcal U_0$, and $\mathcal U_0$ is also closed in $M$.
Connectedness forces $\mathcal U_0=M$. This proves the one-point CMC
assertion and its contrapositive, the global non-CMC bound; it does not require
the gradient set of a nonconstant smooth function to be dense.
On a nonzero CMC component the normal equation gives $|A|^2=\Lambda$, and
$\tau^2\le n|A|^2$ gives \eqref{eq:CMC-norm}. The argument is local in the
normal orientation, while $|H|$ is orientation-independent, so nonorientable
hypersurfaces are included.

For $n=1$, the tangential equation on a nonzero curvature interval is
\[
 (2\tau\Phi'(\tau)+\Phi(\tau))\tau'=0.
\]
The coefficient has the strict sign of $\tau$, since $F,F'>0$, hence $\tau'=0$. The normal equation then gives $\tau^2=\Lambda$ on that interval. Continuity precludes a boundary between a nonzero constant curvature interval and zero curvature. Thus the curvature is locally constant everywhere, and when $\Lambda\le0$ it vanishes.
\end{proof}

\subsection{Powers and the four-parameter formula}
For $r=p-1>0$, $F(t)=t^r$ has constant elasticity $r$. The functions are
\[
 k_F(t)=\frac{t}{2r},\quad \Theta(k)=(2r+1)k,
 \quad Y(k)=(2r)^rk^r.
\]
The positive constant factor $(2r)^r$ in $Y$ cancels from the homogeneous normal equation. Equation \eqref{eq:KLambda} gives
\[
 K_\Lambda=\sqrt{\frac{(n-1)\Lambda}{n-1+(2r+1)^2}},
 \qquad t_F(\Lambda)=2rK_\Lambda,
\]
which proves Theorem~\ref{thm:power}. In particular:
\begin{corollary}\label{cor:spaceforms}
In every finite dimension, minimality holds for solutions of \eqref{eq:power-main} with $p>1$ and $\sigma\ge nc$. It includes all Euclidean and hyperbolic $p$-biharmonic hypersurfaces ($\sigma=0$, $c\le0$), and, in hyperbolic space, also the spectral range $nc\le\sigma<0$. For $c>0$, $\sigma=0$, the non-CMC threshold is
\[
 |H|\le\frac{2(p-1)}{\sqrt n}
       \sqrt{\frac{(n-1)c}{n-1+(2p-1)^2}}.
\]
All assertions concern the induced metric and the explicit operator \eqref{eq:J-intro}.
\end{corollary}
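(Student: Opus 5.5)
The statement is Corollary~\ref{cor:spaceforms}. I would obtain it directly from Theorem~\ref{thm:power}, which the preceding computation has just established from Theorem~\ref{thm:nonlinear}, by specializing the parameters.

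\emph{Minimality.} For $p>1$ the law $F(t)=t^{p-1}$ has constant elasticity $R_F\equiv r=p-1>0$. Hence $R_F-tR_F'=r>0$, and $F$ is admissible with $r_-=r_+=r$. If $\sigma\ge nc$ then $\Lambda=nc-\sigma\le0$, and Theorem~\ref{thm:power} (or Theorem~\ref{thm:nonlinear}) gives minimality in every dimension $n\ge1$. The case $n=1$ is covered by the dimension-one argument in Section~\ref{sec:geometry}: there the curvature satisfies $\tau^2=\Lambda\le0$ on nonzero regions, so it vanishes.

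\emph{The two listed special cases.} Euclidean and hyperbolic $p$-biharmonic hypersurfaces have $\sigma=0$ and $c\le0$. Then $\sigma=0\ge nc$, so they fall inside the minimality range. In hyperbolic space with $c=-a^2$, every $\sigma$ with $-na^2\le\sigma<0$ also satisfies $\sigma\ge nc$. This is exactly the third row of the table in Subsection~\ref{subsec:three-regimes}.

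\emph{The spherical threshold.} For $c>0$ and $\sigma=0$ we have $\Lambda=nc>0$. Substituting into \eqref{eq:explicit-threshold} gives
\[
 h_{n,p}(nc)=\frac{2(p-1)}{n}\sqrt{\frac{(n-1)nc}{n-1+(2p-1)^2}}
 =\frac{2(p-1)}{\sqrt n}\sqrt{\frac{(n-1)c}{n-1+(2p-1)^2}}.
\]
The non-strict global bound for connected non-CMC solutions then comes directly from Theorem~\ref{thm:power}.

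The final sentence of the corollary, that everything concerns the induced metric and the operator \eqref{eq:J-intro}, is a convention already built into the derivation of \eqref{eq:two-equations}. It needs no separate argument. There is no real obstacle: the only check is the algebraic simplification $\sqrt{n}/n=1/\sqrt n$ and the verification of admissibility.
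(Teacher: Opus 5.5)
Your proposal is correct and follows the paper's own route: the corollary is read off from Theorem~\ref{thm:power} with $\Lambda=nc-\sigma$, and the only computation is the simplification of $h_{n,p}(nc)$. For $n=1$ the spherical threshold clause is vacuous, because every connected solution then has constant curvature. So the restriction $n\ge2$ that Theorem~\ref{thm:power} imposes for that clause costs nothing.
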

For powers one can alternatively eliminate the fractional power altogether. With $k=-(2r+1)^{-1}\sum\lambda_i$, multiplication of the normal equation by $k^{2-r}$ on $k>0$ gives the polynomial residual
\begin{equation}\label{eq:power-residual}
 r kD^2k+r(r-1)(Dk)^2-r\mathsf T kDk
       -k^2(k^2+\mathsf Q-\Lambda)=0.
\end{equation}
Its coefficients may be real; the exponent $r$ need not be integral. The proof still uses persistence on an open interval, not the vanishing of this polynomial at a single initial state.

\subsection{Mixtures: the variance criterion}
\begin{proof}[Proof of Corollary~\ref{cor:mixtures}]
Let $\omega_j(t)=a_jt^{r_j}/F(t)$. These positive weights sum to one. Logarithmic differentiation gives
\[
 t\omega_j'(t)=\omega_j(t)(r_j-R_F(t)),\qquad
 tR_F'(t)=\sum_jr_j\omega_j(r_j-R_F)
         =\sum_j\omega_j r_j^2-R_F^2.
\]
Consequently
\begin{equation}\label{eq:variance}
 R_F=\sum_j\omega_jr_j,
 \qquad tR_F'=\sum_j\omega_j(r_j-R_F)^2.
\end{equation}
For a random variable taking values in $[r_-,r_+]$, the pointwise inequality $(r-r_-)(r_+-r)\ge0$ implies
\[
 \operatorname{Var}(r)\le (R_F-r_-)(r_+-R_F)
 \le\frac{(r_+-r_-)^2}{4}.
\]
Thus $R_F-tR_F'\ge r_--(r_+-r_-)^2/4>0$. The other admissibility conditions follow immediately; real powers are real analytic on the positive half-line.
\end{proof}
For example $F(t)=t+t^2$ has
\[
 R_F(t)=\frac{1+2t}{1+t},\qquad
 k_F(t)=\frac{t(1+t)}{2(1+2t)},\qquad
 k_F'(t)=\frac{1+2t+2t^2}{2(1+2t)^2}>0.
\]
For a fixed immersion, its spectral residual is the sum of the residuals for $F_1(t)=t$ and $F_2(t)=t^2$, because $\JJ_X-\sigma\Id$ is linear in the normal field. Vanishing of that sum does not imply separate vanishing of the two residuals, and the immersion equations do not admit a superposition principle. The conclusion follows instead from the nonlinear trace law and Theorem~\ref{thm:endpoint}.

\subsection{Explicit hyperbolic consequences for every response class}\label{subsec:hyperbolic-results}
The following is a specialization of the common theorem, not a separate ordinary-biharmonic argument.
\begin{corollary}[Hyperbolic nonlinear spectral rigidity]\label{cor:hyperbolic-all}
Let $a>0$ and let $X:M^n\to\Hyp^{n+1}(-a^2)$ satisfy \eqref{eq:spectral-main} for an admissible law $F$.
If $\sigma\ge-na^2$, then $X$ is minimal. If $\sigma<-na^2$ and $n\ge2$, let $t_{F,a,\sigma}>0$ be the unique root of
\begin{equation}\label{eq:hyperbolic-implicit}
 k_F(t)^2+\frac{(t+k_F(t))^2}{n-1}=-na^2-\sigma.
\end{equation}
At every gradient point, $|H|<t_{F,a,\sigma}/n$. A connected non-CMC solution satisfies $|H|\le t_{F,a,\sigma}/n$ everywhere; strict exceedance at one point forces CMC. On a nonminimal CMC component,
\begin{equation}\label{eq:hyperbolic-CMC}
 |A|^2=-na^2-\sigma,\qquad |H|\le\sqrt{-a^2-\sigma/n}.
\end{equation}
For $n=1$ the curvature is locally constant, with $\tau^2=-a^2-\sigma$ on each nonminimal component.
\end{corollary}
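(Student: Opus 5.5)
This corollary is Theorem~\ref{thm:nonlinear} specialized to the hyperbolic row of the table in Subsection~\ref{subsec:three-regimes}, so the plan is a direct substitution with $c=-a^2$. With this value, $nc=-na^2$ and $\Lambda=nc-\sigma=-na^2-\sigma$. The minimality hypothesis $\sigma\ge nc$ of Theorem~\ref{thm:nonlinear} then reads $\sigma\ge-na^2$, and the first assertion follows at once. The local arguments hold in any Riemannian manifold of constant curvature $c$, and in particular in $\Hyp^{n+1}(-a^2)$. No completeness or orientability is needed, because the proof of Theorem~\ref{thm:nonlinear} works in local normal orientations with the orientation-independent scalar $|H|$.

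For $\sigma<-na^2$ we have $\Lambda>0$. I will check that the root $t_{F,a,\sigma}$ of \eqref{eq:hyperbolic-implicit} equals $t_F(\Lambda)$ from \eqref{eq:implicit-threshold}. The two equations become identical once $\Lambda=-na^2-\sigma$ is inserted. Uniqueness holds because the left side is strictly increasing in $t$, since $k_F$ is an increasing diffeomorphism of $(0,\infty)$. It tends to $0$ as $t\to0$ and to $\infty$ as $t\to\infty$. With this identification, Theorem~\ref{thm:nonlinear} supplies three conclusions: the strict bound $|H|<t_{F,a,\sigma}/n$ at gradient points, the non-strict bound on connected non-CMC solutions, and the one-point CMC criterion.

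On a nonminimal CMC component, Theorem~\ref{thm:nonlinear} gives $|A|^2=\Lambda=-na^2-\sigma$. From $\tau^2\le n|A|^2$ we get $n^2H^2\le n(-na^2-\sigma)$, that is, $|H|\le\sqrt{-a^2-\sigma/n}$, which is \eqref{eq:hyperbolic-CMC}. For $n=1$ the dimension-one discussion after \eqref{eq:two-equations} gives locally constant curvature with $\tau^2=\Lambda=-a^2-\sigma$ on nonminimal components.

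There is no genuine obstacle here. The only care needed is the bookkeeping of the sign of $c$ in $\Lambda$. It is also worth noting that the hyperbolic case is already covered inside Theorem~\ref{thm:endpoint}, through the Lorentz model in Lemma~\ref{lem:end-focal} and the arsinh support distance in Lemma~\ref{lem:end-turn}, so nothing new has to be proved.
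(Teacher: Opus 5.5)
Your proposal is correct and is exactly the paper's proof: substitute $c=-a^2$ into Theorem~\ref{thm:nonlinear}, so that $\Lambda=-na^2-\sigma$ turns \eqref{eq:implicit-threshold} and \eqref{eq:CMC-norm} into the displayed hyperbolic formulas, and the $n=1$ assertion comes from the one-dimensional part of that theorem. Your additional checks are all accurate: uniqueness of the root, the bound from $\tau^2\le n|A|^2$, and $\tau^2=\Lambda=-a^2-\sigma$ when $n=1$.
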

\begin{proof}
Substitute $c=-a^2$ in Theorem~\ref{thm:nonlinear}. Then $\Lambda=-na^2-\sigma$, and \eqref{eq:implicit-threshold} and \eqref{eq:CMC-norm} become the displayed formulas. The $n=1$ assertion follows from the one-dimensional part of that theorem.
\end{proof}
For powers $F(t)=t^{p-1}$, $p>1$, the hyperbolic gradient-point bound is explicitly
\begin{equation}\label{eq:hyperbolic-power-bound}
 |H|<\frac{2(p-1)}{n}
 \sqrt{\frac{(n-1)(-na^2-\sigma)}{n-1+(2p-1)^2}},
 \qquad n\ge2,\quad \sigma<-na^2.
\end{equation}
At $\sigma=0$, every such power-law solution is minimal. The same minimality conclusion holds for every positive mixture in Corollary~\ref{cor:mixtures}, not only for individual powers. For the nonhomogeneous example $F(t)=t+t^2$, the threshold in \eqref{eq:hyperbolic-implicit} uses
\[
 k_F(t)=\frac{t(1+t)}{2(1+2t)};
\]
no separate vanishing of the two pure-power residuals is assumed.

For comparison, the Euclidean specialization replaces the right-hand side of \eqref{eq:hyperbolic-implicit} by $-\sigma$ and has minimality for $\sigma\ge0$. The spherical specialization has right-hand side $nc-\sigma$ and minimality for $\sigma\ge nc$. The zero-parameter spherical equation is subcritical, so it gives curvature thresholds and CMC criteria rather than unconditional minimality. These distinctions explain why the same theorem has different geometric consequences in the three spaces.

\subsection{Rescaling an ambient curvature and a nonhomogeneous law}\label{subsec:scaling}
Keeping $c$ arbitrary avoids an implicit change of constitutive law. The precise scaling rule is as follows.
\begin{proposition}[Homothety of the spectral equation]\label{prop:scaling}
Let $b_0>0$ and replace the ambient and induced metrics by $b_0^2\bar g$ and $b_0^2g$. Then
\[
 \widetilde c=c/b_0^2,\qquad \widetilde\tau=\tau/b_0,
 \qquad \widetilde\sigma=\sigma/b_0^2,
 \qquad \widetilde F(t)=F(b_0t)
\]
transform \eqref{eq:spectral-main} into the same equation in the rescaled geometry. Admissibility is preserved.
\end{proposition}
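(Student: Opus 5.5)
The plan is to track how every ingredient of \eqref{eq:spectral-main} scales under $\bar g\mapsto b_0^2\bar g$ with $X$ fixed, and then absorb the resulting factors into $\widetilde\sigma$ and $\widetilde F$.

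\textbf{Basic rescalings.} The Levi-Civita connections of $\bar g$ and of the induced $g$ are unchanged. Sectional curvature scales as $\widetilde c=c/b_0^2$. The unit normal is $\widetilde\nu=\nu/b_0$. The shape operator, viewed as an endomorphism, becomes $\widetilde A=A/b_0$, because $\widetilde A=-\bar\nabla\widetilde\nu$. Hence $\widetilde\tau=\tau/b_0$.

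\textbf{Scaling of $\JJ_X$.} The rough Laplacian $\Delta^X$ is a trace with respect to $g$ of a connection-only expression, so it scales by $b_0^{-2}$. For the curvature term $\sum_a\bar R(dX(e_a),V)dX(e_a)$, the $(1,3)$ tensor $\bar R$ is unchanged but the orthonormal frame becomes $\widetilde e_a=e_a/b_0$, giving another factor $b_0^{-2}$. Therefore $\widetilde\JJ_X=b_0^{-2}\JJ_X$ on vector fields along $X$, with no change to the field itself.

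\textbf{Scaling of the response field.} With $\widetilde F(t)=F(b_0t)$,
\[
\widetilde W=\sgn(\widetilde\tau)\widetilde F(|\widetilde\tau|)\widetilde\nu=\sgn(\tau)F(|\tau|)\nu/b_0=W_F/b_0.
\]
The equation $\widetilde\JJ\widetilde W=\widetilde\sigma\widetilde W$ therefore reads $b_0^{-3}\JJ_XW_F=\widetilde\sigma b_0^{-1}W_F$. This is equivalent to $\JJ_XW_F=b_0^2\widetilde\sigma W_F=\sigma W_F$.

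As a cross-check, the scalar form \eqref{eq:two-equations} transforms consistently. In the tangential equation, $2\widetilde\Phi'(\widetilde\tau)\widetilde A+\widetilde\Phi\Id$ becomes $2b_0F'\,A/b_0+F\Id$, which is the original operator. In the normal equation, $\widetilde\Lambda=\Lambda/b_0^2$ and $|\widetilde A|^2=|A|^2/b_0^2$, matching the factor $b_0^{-2}$ in $\widetilde\Delta$.

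\textbf{Admissibility.} The only remaining check is that $\widetilde F$ is admissible. It is positive and analytic. Its elasticity is $R_{\widetilde F}(t)=R_F(b_0t)$, so the bounds $r_\pm$ are kept. Moreover $tR_{\widetilde F}'(t)=(b_0t)R_F'(b_0t)$, so $R_{\widetilde F}-tR_{\widetilde F}'>0$ holds.

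The main obstacle is bookkeeping: keeping straight that $\JJ$ acts on ambient vectors, which are not rescaled, while the unit normal is rescaled. Using \eqref{eq:two-equations}, which involves only scalars, avoids any ambiguity.
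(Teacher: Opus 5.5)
Your proposal is correct and follows the paper's proof step for step. Both arguments use unchanged connections and $(1,3)$ curvature, $\widetilde\nu=\nu/b_0$, $\widetilde A=A/b_0$, $\widetilde\JJ=b_0^{-2}\JJ$ from the rescaled frame, $\widetilde W=W_F/b_0$, and the elasticity identity $R_{\widetilde F}(t)=R_F(b_0t)$; your scalar cross-check via \eqref{eq:two-equations} is a harmless extra that the paper omits.
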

\begin{proof}
Constant rescaling preserves the connections and the curvature tensor as a $(1,3)$-tensor. The unit normal becomes $\widetilde\nu=\nu/b_0$, and tracing with the rescaled metric gives $\widetilde A=A/b_0$, $\widetilde\tau=\tau/b_0$. The orthonormal tangent frame is divided by $b_0$, so $\widetilde\JJ=b_0^{-2}\JJ$ on vector fields along $X$. The chosen response satisfies
\[
 \widetilde W_{\widetilde F}=b_0^{-1}W_F,\qquad
 \widetilde\JJ\widetilde W_{\widetilde F}
 =b_0^{-3}\JJ W_F
 =\frac{\sigma}{b_0^2}\widetilde W_{\widetilde F}.
\]
Moreover, $R_{\widetilde F}(t)=R_F(b_0t)$ and
\[
 R_{\widetilde F}(t)-tR_{\widetilde F}'(t)
 =R_F(b_0t)-(b_0t)R_F'(b_0t)>0.
\]
The same elasticity bounds hold, and analyticity is preserved.
\end{proof}
In particular, normalizing $c=-a^2$ to $-1$ uses $b_0=a$, hence $\widetilde\sigma=\sigma/a^2$ and $\widetilde F(t)=F(at)$. For a pure power this changes only a positive overall factor, which cancels from the equation. For a mixture $\sum_j a_jt^{r_j}$ it changes the coefficients to $a_j a^{r_j}$ and generally changes their ratios. Thus an unchanged nonlinear law must not be silently carried through a curvature normalization.

\subsection{Umbilical examples and the exact spectral boundary}\label{subsec:umbilical-range}
A nonzero CMC umbilical hypersurface $A=\lambda\Id$ has $|\tau|=n|\lambda|$ and $|A|^2=n\lambda^2$. The two equations \eqref{eq:two-equations} reduce to
\begin{equation}\label{eq:umbilical-spectral}
 \JJ_XW_F=n(c-\lambda^2)W_F.
\end{equation}
This uses only a positive value of $F$ at the constant amplitude; no global admissibility condition is needed for the examples.

For a geodesic sphere of radius $R$, polar coordinates give the metric
\[
 dR^2+s_c(R)^2g_{\Sph^n(1)},
\]
where $s_c,C_c$ are defined in \eqref{eq:elementary-curvature}. Choose the inward radial normal. Differentiating the warping factor gives
\begin{equation}\label{eq:geodesic-radius-parameters}
 \lambda=\frac{C_c(R)}{s_c(R)},\qquad
 K_0=\lambda^2+c=\frac1{s_c(R)^2},\qquad
 \sigma=n(c-\lambda^2).
\end{equation}
For completeness, the nonflat model parametrization is
$X_R(\omega)=C_c(R)o+s_c(R)\omega$, with
$\langle o,o\rangle=1/c$, $\omega\in T_o\mathbb M_c^{n+1}$ and $|\omega|=1$.
Its angular differential is $s_c(R)d\omega$, which proves the displayed induced metric; differentiating the inward radial normal yields $A=(C_c/s_c)\Id$. The flat parametrization is $o+R\omega$.

In Euclidean space, $\lambda=1/R$, so compact round spheres realize every $\sigma<0$. In the sphere of curvature $c>0$, take $0<R<\pi/(2\sqrt c)$; then $\lambda=\sqrt c\cot(\sqrt cR)$ runs over all positive values. These compact geodesic spheres realize every $\sigma<nc$. A larger nonminimal spherical geodesic sphere is congruent to a smaller one about the antipodal center, after a normal reversal.

In $\Hyp^{n+1}(-a^2)$ the complete umbilical examples divide into three ranges:
\begin{center}
\small
\renewcommand{\arraystretch}{1.2}
\begin{tabular}{@{}lll@{}}
\toprule
Model & Positive principal curvature & Spectral range \\
\midrule
Geodesic sphere, $R>0$ & $a\coth(aR)$ & $\sigma<-2na^2$ \\
Horosphere & $a$ & $\sigma=-2na^2$ \\
Equidistant hypersurface, $R>0$ & $a\tanh(aR)$ & $-2na^2<\sigma<-na^2$ \\
\bottomrule
\end{tabular}
\end{center}
Here the last two types are noncompact. The geodesic-sphere formula follows from \eqref{eq:geodesic-radius-parameters}. To check the equidistant formula without a classification theorem, let $P\subset\Hyp^{n+1}(-a^2)$ be a totally geodesic hyperplane and $\eta$ its constant unit spacelike normal in the hyperboloid model. For $x\in P$ set
\[
 X_R(x)=\cosh(aR)x+\frac{\sinh(aR)}a\eta,
 \qquad \nu_R=-a\sinh(aR)x-\cosh(aR)\eta.
\]
Direct Lorentzian inner products give $\langle X_R,X_R\rangle=-a^{-2}$, $\langle\nu_R,\nu_R\rangle=1$, and $\langle X_R,\nu_R\rangle=0$. Also
\[
 dX_R=\cosh(aR)\,dx,\qquad d\nu_R=-a\sinh(aR)\,dx,
\]
so $A=a\tanh(aR)\Id$. The induced metric is $\cosh^2(aR)g_P$, which is complete. The map is an embedding because the formula is injective with a smooth inverse on its image.

For a horosphere use the upper half-space metric $\bar g=(az)^{-2}(dy^2+dz^2)$ on $z>0$. A slice $z=z_0$ with normal $\nu=az\partial_z$ satisfies $\bar\nabla_Y\nu=-aY$ for every tangential $Y$, as follows from $\bar\nabla_Y\partial_z=-Y/z$. Hence $A=a\Id$. Its induced metric is a constant multiple of the Euclidean metric, so this embedded hypersurface is complete. Formula \eqref{eq:umbilical-spectral} gives all three spectral ranges in the table.

\begin{proof}[Proof of Proposition~\ref{prop:sharp-boundary}]
Fix $\sigma<nc$ and set $\lambda=\sqrt{c-\sigma/n}>0$. In the flat and spherical cases choose the geodesic sphere with this value of $\lambda$. In the hyperbolic case, use a geodesic sphere if $\lambda>a$, a horosphere if $\lambda=a$, and an equidistant hypersurface if $0<\lambda<a$. The formulas above give the desired complete embedded model in every case, and \eqref{eq:umbilical-spectral} proves its equation. These examples exist for every admissible law as well. Together with Theorem~\ref{thm:nonlinear}, they show that $\sigma\ge nc$ is exactly the unconditional minimality range. This does not assert a classification of all subcritical solutions or an optimal non-CMC threshold.
\end{proof}

\section{Induced-metric linearization and model kernels}\label{sec:local}
This section does not use Theorem~\ref{thm:endpoint}. We compute the full induced-metric linearization and identify its joint kernel. Section~\ref{sec:nonlinear-local} then proves coercivity and the nonlinear estimate modulo ambient isometries. Throughout, the graph is global over the entire compact model, and its mean curvature stays away from zero.

\subsection{The defects and their induced-metric linearization}
Let $X_0:M_0\to N^{n+1}(c)$ be one of the models in Theorem~\ref{thm:local}, with constant trace $\tau_0\ne0$ and $S_0=|A_0|^2=nc-\sigma$. Its normal graph is
\[
 X_u(x)=\exp_{X_0(x)}(u(x)\nu_0(x)).
\]
All operators below are formed using the induced metric of $X_u$, then pulled back by $X_u$ to $M_0$. Use the differential of $X_u$ to identify tangential vectors with $TM_0$. Define the scalar and vector defects
\begin{align}
 \mathcal F_N(u)&=\Delta_u\Phi(\tau_u)
                 -(S_u-nc+\sigma)\Phi(\tau_u),\label{eq:normal-defect}\\
 \mathcal F_T(u)&=(2\Phi'(\tau_u)A_u+\Phi(\tau_u)\Id)\nabla_u\tau_u.
 \label{eq:tangent-defect}
\end{align}
Their vanishing is exactly \eqref{eq:spectral-main}. The norms in the estimates are taken with the fixed smooth metric on $M_0$; small graphs give uniformly equivalent pulled-back metrics.

For a normal speed $u\nu_0$, the first variations with our signs are
\begin{equation}\label{eq:variation}
 \dot g=-2u h_0,\qquad
 \dot A=\Hess u+u(A_0^2+c\Id),\qquad
 \psi:=\dot\tau=(\Delta+S_0+nc)u.
\end{equation}
To check the metric-dependent signs, use coordinates at a point with $g_{ij}=\delta_{ij}$ and $\Gamma^k_{ij}=0$, and write $X_i=dX(\partial_i)$. Differentiating the normal conditions and the tangent vectors gives
\[
 \dot\nu=-\nabla u,\qquad
 \bar\nabla_{\partial_t}X_j=u_j\nu-uA_0X_j,
 \qquad \dot g^{ij}=2u h_0^{ij}.
\]
Since $h_{ij}=\langle\bar\nabla_iX_j,\nu\rangle$, the curvature commutator yields
\begin{align*}
 \dot h_{ij}
 &=\langle\bar\nabla_i(u_j\nu-uA_0X_j),\nu\rangle
   +\langle\bar R(u\nu,X_i)X_j,\nu\rangle
   +\langle\bar\nabla_iX_j,-\nabla u\rangle\\
 &=(\Hess u)_{ij}-u(A_0^2)_{ij}+cu g_{ij}.
\end{align*}
Differentiating $A^i{}_j=g^{ik}h_{kj}$ adds $2uA_0^2$:
\[
 \dot A^i{}_j
 =2u(A_0^2)^i{}_j+(\Hess u)^i{}_j-u(A_0^2)^i{}_j+cu\delta^i_j.
\]
This proves \eqref{eq:variation}; its trace gives $\psi$. In particular,
\begin{equation}\label{eq:Svariation}
 \dot S=2\ip{A_0}{\Hess u}
          +2u\tr(A_0^3+cA_0).
\end{equation}
These are variations of the equation with the newly induced metric, not the Hessian of a functional with one frozen domain metric.

Let
\begin{equation}\label{eq:r-local}
 r_0=\frac{\tau_0\Phi'(\tau_0)}{\Phi(\tau_0)}
   =\frac{|\tau_0|F'(|\tau_0|)}{F(|\tau_0|)}>0.
\end{equation}
Write $\Phi_0=\Phi(\tau_0)$ and $\Phi_0'=\Phi'(\tau_0)$. Differentiating the normal defect before simplifying gives
\[
 \begin{aligned}
 D\mathcal F_N(0)u
   &=(\dot\Delta)\Phi_0+\Delta(\Phi_0'\psi)
     -\dot S\Phi_0-(S_0-nc+\sigma)\Phi_0'\psi\\
   &=\Phi_0'\Delta\psi-\Phi_0\dot S.
 \end{aligned}
\]
Here every scalar Laplacian annihilates the constant $\Phi_0$, so $(\dot\Delta)\Phi_0=0$, and $S_0-nc+\sigma=0$ by the model equation. Similarly, the derivative of the coefficient in \eqref{eq:tangent-defect} is multiplied by $\nabla\tau_0=0$. The metric variation in $\nabla_u\tau_0$ also vanishes because $d\tau_0=0$. Thus
\[
 D\mathcal F_T(0)u=(2\Phi_0'A_0+\Phi_0\Id)\nabla\psi.
\]
Dividing by the positive constant $\Phi_0/\tau_0$ gives
\begin{equation}\label{eq:linear-general}
 L_Nu=r_0\Delta \psi-\tau_0\dot S,
 \qquad L_Tu=(2r_0A_0+\tau_0\Id)\nabla \psi.
\end{equation}
Only the positive number $r_0$ enters these model operators. Higher derivatives of $F$ enter the nonlinear remainder, not the infinitesimal kernel.

\subsection{Round-sphere harmonics and completeness}
The only approximation theorem needed for the spectral decomposition is the following real form of Stone--Weierstrass \cite{Stone}.
\begin{standard}[Real Stone--Weierstrass theorem]\label{std:Stone-Weierstrass}
On a compact Hausdorff space $K$, a real subalgebra $\mathcal A\subset C(K,\R)$ is uniformly dense provided that $1\in\mathcal A$ and that for any distinct $x,y\in K$ some $f\in\mathcal A$ satisfies $f(x)\ne f(y)$.
\end{standard}
We apply this only to polynomial restrictions on round spheres and products of two round spheres. These sets are compact Hausdorff; the restrictions form an algebra, contain constants and separate points through the coordinate functions. Thus all of the stated hypotheses hold.

\begin{lemma}[Round-sphere spectrum]\label{lem:sphere-spectrum}
On $\Sph^m(R)$, $m\ge1$, the eigenvalues of $-\Delta$ are
$R^{-2}\ell(\ell+m-1)$, $\ell=0,1,\ldots$. The degree-$\ell$ eigenspace consists of the restrictions of homogeneous harmonic polynomials of degree $\ell$ in $\R^{m+1}$. These spaces give a complete orthogonal decomposition of $L^2$. Products of such harmonics give a complete joint decomposition for the two factor Laplacians on a product of round spheres.
\end{lemma}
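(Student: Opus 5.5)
The proof proceeds in three steps: harmonic polynomials give eigenfunctions with the stated eigenvalues, Stone--Weierstrass gives completeness, and the product case follows by tensoring.

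\emph{Step 1 (eigenfunctions).} Let $P$ be a homogeneous harmonic polynomial of degree $\ell$ on $\R^{m+1}$, and write $P(r\omega)=r^\ell f(\omega)$ with $f=P|_{\Sph^m(1)}$. The polar decomposition of the Euclidean Laplacian is
\[
 \Delta_{\R^{m+1}}=\partial_r^2+\frac mr\partial_r+\frac1{r^2}\Delta_{\Sph^m(1)}.
\]
Applying it to $r^\ell f$ gives
\[
 0=r^{\ell-2}\bigl(\ell(\ell-1)+m\ell\bigr)f+r^{\ell-2}\Delta_{\Sph^m(1)}f.
\]
Hence $-\Delta f=\ell(\ell+m-1)f$. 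Scaling the radius to $R$ multiplies the Laplacian by $R^{-2}$, which gives the stated eigenvalues. The numbers $\ell(\ell+m-1)$ are strictly increasing in $\ell$, so distinct degrees give distinct eigenvalues. Self-adjointness of $\Delta$ then makes the spaces $\mathcal H_\ell$ of different degrees mutually $L^2$-orthogonal.

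\emph{Step 2 (completeness).} Let $\mathcal P_N$ denote polynomials of degree at most $N$. I would show that every restriction of an element of $\mathcal P_N$ lies in $\bigoplus_{\ell\le N}\mathcal H_\ell$. The tool is the classical decomposition
\[
 \mathcal P_\ell^{\mathrm{hom}}=\mathcal H_\ell^{\mathrm{hom}}\oplus |x|^2\mathcal P_{\ell-2}^{\mathrm{hom}}.
\]
It follows from the Fischer inner product $\langle P,Q\rangle=P(\partial)Q$. Under this inner product, multiplication by $|x|^2$ is adjoint to $\Delta$, so the orthogonal complement of $|x|^2\mathcal P_{\ell-2}$ is exactly $\ker\Delta$. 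Since $|x|^2=1$ on $\Sph^m(1)$, induction on degree gives the claim. Polynomial restrictions are uniformly dense in $C(\Sph^m)$ by Standard theorem~\ref{std:Stone-Weierstrass}, and $C$ is dense in $L^2$. Therefore $\bigoplus_\ell\mathcal H_\ell$ is dense in $L^2$.

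It remains to check that there are no other eigenvalues. Suppose $-\Delta g=\mu g$ with $g\ne0$. Then $g$ is orthogonal to every $\mathcal H_\ell$ with $\ell(\ell+m-1)\ne\mu$. By density, $g$ must therefore lie in $\mathcal H_\ell$ for some $\ell$ with $\ell(\ell+m-1)=\mu$, so $\mu$ is one of the listed values. The same orthogonality argument shows the degree-$\ell$ eigenspace equals $\mathcal H_\ell$ exactly.

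\emph{Step 3 (products).} On $\Sph^a(R_1)\times\Sph^b(R_2)$, the products $f_1(x)f_2(y)$ are joint eigenfunctions of the two factor Laplacians, and they are orthogonal by Fubini. Their span contains all restrictions of polynomials in $(x,y)\in\R^{a+1}\times\R^{b+1}$, because each monomial factors and each factor lies in a finite sum of harmonics by Step 2. Standard theorem~\ref{std:Stone-Weierstrass}, applied on the compact product, gives uniform density of this span, and hence $L^2$ completeness of the joint decomposition.

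The main obstacle is Step 2: the decomposition of homogeneous polynomials into harmonic pieces plus $|x|^2$-multiples. Everything else reduces to routine polar computation and Stone--Weierstrass.
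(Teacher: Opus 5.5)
Your proof is correct and follows the same skeleton as the paper's. Both use the polar computation for $r^\ell f$, the decomposition $P_q=H_q+|x|^2H_{q-2}+|x|^4H_{q-4}+\cdots$ of a homogeneous polynomial, Stone--Weierstrass density, orthogonality to exclude further eigenfunctions, and the point-separating algebra on products.

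The genuine difference is how you obtain the polynomial decomposition. The paper argues constructively from the identity $\Delta_{\R^N}(|x|^{2j}H_\ell)=2j(2\ell+N+2j-2)|x|^{2j-2}H_\ell$. By induction, $\Delta P_q$ is a sum of terms $|x|^{2j}H_{q-2-2j}$. Each such term is the Laplacian of a nonzero multiple of $|x|^{2j+2}H_{q-2-2j}$, and subtracting these primitives from $P_q$ leaves a harmonic polynomial. That route needs only one differentiation, and it gives existence of the decomposition, which is all the density argument uses.

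Your Fischer pairing replaces this computation with duality. The relation $\langle x^\alpha,x^\beta\rangle=\alpha!\,\delta_{\alpha\beta}$ makes the pairing positive definite, and $(|x|^2P)(\partial)Q=P(\partial)\Delta Q$ identifies the orthogonal complement of $|x|^2\mathcal P^{\mathrm{hom}}_{\ell-2}$ with the harmonic polynomials. You therefore get a direct sum, hence uniqueness of the pieces. You also get the dimension count $\dim\mathcal H^{\mathrm{hom}}_\ell=\dim\mathcal P^{\mathrm{hom}}_\ell-\dim\mathcal P^{\mathrm{hom}}_{\ell-2}>0$ for $m+1\ge2$.

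That count buys something extra. A homogeneous polynomial vanishing on the sphere vanishes identically, so the count shows every listed value $R^{-2}\ell(\ell+m-1)$ is actually attained. Neither your write-up nor the paper states this explicitly. It could also be seen directly by exhibiting $\operatorname{Re}(x_1+ix_2)^\ell$.
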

\begin{proof}
First take $R=1$ and put $N=m+1$. For $x=r\omega$, the Euclidean Laplacian is
\[
 \Delta_{\R^N}=\partial_r^2+\frac{N-1}{r}\partial_r
                   +\frac1{r^2}\Delta_{\Sph^{N-1}}.
\]
If $H_\ell(r\omega)=r^\ell h(\omega)$ is harmonic, substitution gives
\[
 0=r^{\ell-2}\{\Delta_{\Sph^{N-1}}h+\ell(\ell+N-2)h\},
\]
which is the claimed eigenvalue. To prove completeness rather than merely produce eigenfunctions, direct differentiation gives
\[
 \Delta_{\R^N}(|x|^{2j}H_\ell)
 =2j(2\ell+N+2j-2)|x|^{2j-2}H_\ell\quad(j\ge1).
\]
The coefficient is nonzero for $N\ge2$. A polynomial of degree zero or one is already harmonic. For a homogeneous polynomial $P_q$ of degree $q\ge2$, apply the inductive decomposition to $\Delta P_q$, which has degree $q-2$. Each term $|x|^{2j}H_{q-2-2j}$ in that decomposition is the Laplacian of a nonzero scalar multiple of $|x|^{2j+2}H_{q-2-2j}$ by the displayed formula. Subtracting the sum of these primitives from $P_q$ leaves a harmonic homogeneous polynomial of degree $q$. Thus
\[
 P_q=H_q+|x|^2H_{q-2}+|x|^4H_{q-4}+\cdots.
\]
On the sphere this is a finite sum of harmonic restrictions. Standard theorem~\ref{std:Stone-Weierstrass} makes their span uniformly dense in continuous functions, hence dense in $L^2$. Different degrees are orthogonal: integration by parts gives
$(E_\ell-E_j)\langle h_\ell,h_j\rangle=0$, and the eigenvalues are strictly increasing. Completeness also excludes any further smooth eigenfunctions, by pairing a putative eigenfunction with every harmonic space. Rescaling the metric multiplies the eigenvalues by $R^{-2}$.

On a product, finite sums of products of polynomial restrictions again form a point-separating algebra containing constants. The same density argument and the decomposition on each factor give a complete joint harmonic basis. Orthogonality follows by integration on the two factors separately.
\end{proof}

\subsection{Compact nonminimal geodesic spheres in all three geometries}
The geodesic-sphere metric is $s_c(R)^2g_{\Sph^n(1)}$ by \eqref{eq:geodesic-radius-parameters}. In particular it is a round sphere of radius $K_0^{-1/2}$, including when $n=1$, where $K_0$ denotes the metric scale rather than an intrinsic sectional curvature. The three cases have
\[
\begin{array}{c|c|c}
 &\lambda\text{ (inward normal)}&K_0\\ \hline
c=0 & R^{-1} & R^{-2}\\
c>0 & \sqrt c\cot(\sqrt cR)&c/\sin^2(\sqrt cR)\\
c=-a^2<0 &a\coth(aR)&a^2/\sinh^2(aR).
\end{array}
\]
For the spherical row take the smaller-radius representative just described. Each model is paired with the fixed parameter $\sigma=n(c-\lambda^2)$; varying the radius generally varies that parameter.

Here $A_0=\lambda\Id$, $\lambda\ne0$, $\tau_0=n\lambda$ and $K_0=\lambda^2+c>0$. Equations \eqref{eq:variation}--\eqref{eq:Svariation} yield
\[
 \psi=(\Delta+nK_0)u,\qquad \dot S=2\lambda \psi,
\]
and hence
\begin{equation}\label{eq:sphere-linear}
 L_Nu=(r_0\Delta-2n\lambda^2)(\Delta+nK_0)u,
 \qquad L_Tu=\lambda(2r_0+n)\nabla(\Delta+nK_0)u.
\end{equation}
If both operators vanish, $\psi$ is constant by the tangential equation and then zero by the normal equation. Thus $(\Delta+nK_0)u=0$, and Lemma~\ref{lem:sphere-spectrum} identifies $u$ with a degree-one spherical harmonic. Conversely every such harmonic is in the common kernel.

Here is an explicit identification with ambient isometries. Let $o$ be the center and $W\in T_oN$. In Euclidean space take the translation field $K_W=W$. In the nonflat standard model take
\[
 K_W(z)=c\{\ip{o}{z}W-\ip{W}{z}o\}.
\]
This linear field is skew with respect to the ambient model scalar product, so it generates ambient isometries, and $K_W(o)=W$. Along a unit-speed radial geodesic $\gamma_\omega$ from $o$, the Killing equation gives
\[
 \frac{\dd}{\dd r}\ip{K_W(\gamma_\omega(r))}{\gamma_\omega'(r)}=0.
\]
Its constant value is $\ip W\omega$. The sphere normal is one of the two signs of $\gamma_\omega'$, so the normal component of $K_W$ is $\pm\ip W\omega$. As $W$ varies, these functions span all degree-one harmonics. The common kernel is therefore exactly the space of normal Killing variations and has dimension $n+1$.

\begin{corollary}[Quantitative rigidity of hyperbolic geodesic spheres]\label{cor:hyperbolic-local}
Fix $a,R>0$ and let $M_R$ be the geodesic sphere of radius $R$ in $\Hyp^{n+1}(-a^2)$, with the parameter in \eqref{eq:hyperbolic-sphere-intro}. Suppose $F$ is smooth with $F,F'>0$ near $na\coth(aR)$. Then the estimate and congruence conclusion of Theorem~\ref{thm:local} hold for sufficiently small smooth global $C^4$ normal graphs over $M_R$, with the induced metric of each graph. The exact infinitesimal kernel has dimension $n+1$ and consists of center-moving ambient Killing variations.
\end{corollary}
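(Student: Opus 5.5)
This corollary is the hyperbolic instance of part (a) of Theorem~\ref{thm:local}, so the plan is to check that $M_R$ meets the hypotheses of part (a) and then to read off the kernel statement from the geodesic-sphere computation just above. First I record the model data from \eqref{eq:geodesic-radius-parameters} with $c=-a^2$ and the inward normal: $\lambda=a\coth(aR)\ne0$, $\tau_0=n\lambda=na\coth(aR)$, and $K_0=\lambda^2-a^2=a^2/\sinh^2(aR)>0$. The parameter $\sigma=n(c-\lambda^2)=-na^2(1+\coth^2(aR))$ is the one displayed in \eqref{eq:hyperbolic-sphere-intro}. By \eqref{eq:umbilical-spectral} and the fact that $F(|\tau_0|)>0$, the sphere $M_R$ solves \eqref{eq:spectral-main}. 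The sphere is an entire compact embedded model in $\Hyp^{n+1}(-a^2)$, and $|\tau_0|=na\coth(aR)$ is exactly the amplitude near which $F,F'>0$ is assumed. These are precisely the hypotheses of Theorem~\ref{thm:local}(a).

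For the kernel, I specialize \eqref{eq:sphere-linear}. Since $r_0>0$ by \eqref{eq:r-local} and $\lambda(2r_0+n)\ne0$, a common kernel element must have $\psi=(\Delta+nK_0)u$ constant, by the tangential equation. Feeding this into the normal operator gives $-2n\lambda^2\psi=0$, so $\psi=0$. The induced metric is round of radius $K_0^{-1/2}$. Lemma~\ref{lem:sphere-spectrum} then says that $nK_0$ is the first nonzero eigenvalue of $-\Delta$ and that its eigenspace consists of the degree-one harmonics, which has dimension $n+1$. The explicit Lorentzian fields $K_W(z)=c\{\ip oz W-\ip Wz o\}$ with $W\in T_o\Hyp^{n+1}$ are skew in $\R^{n,1}$. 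Their normal components $\pm\ip W\omega$ realize every such harmonic, so the kernel is exactly the $(n+1)$-dimensional space of center-moving Killing variations.

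The estimate and congruence conclusion then follow from Theorem~\ref{thm:local}, whose nonlinear part (Section~\ref{sec:nonlinear-local}) uses only three ingredients: the kernel identification, ellipticity of the fourth-order system, and the smooth dependence on $F$ near $|\tau_0|$. The one point I expect to need care is that the slice step uses ambient isometries close to the identity in the noncompact group $\mathrm{O}^+(n+1,1)$. I would check that the implicit-function argument only requires a neighbourhood of the identity acting on graphs over the fixed compact sphere, so noncompactness of the ambient space plays no role. With that check done, the corollary is a direct specialization.
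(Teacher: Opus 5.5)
Your proposal is correct and follows the paper's route. You check $\lambda=a\coth(aR)\neq0$ and $K_0=a^2/\sinh^2(aR)>0$ so that Theorem~\ref{thm:local}(a) applies, identify the joint kernel with the degree-one harmonics realized by the Lorentzian Killing fields $K_W$, and then invoke the common coercivity, slice and remainder argument. The only inessential difference is in the kernel step: the paper observes that the normal multiplier $(r_0E_\ell+2n\lambda^2)(E_\ell-nK_0)$ vanishes only at $\ell=1$, so $L_N$ alone already cuts out the kernel, whereas you reuse the tangential-then-normal argument from the general geodesic-sphere case; also, a small slip, the hyperboloid model of $\Hyp^{n+1}$ lives in $\R^{n+1,1}$, not $\R^{n,1}$.
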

\begin{proof}
The radius calculation gives $\lambda=a\coth(aR)\ne0$ and $K_0=a^2/\sinh^2(aR)>0$, so all hypotheses of Theorem~\ref{thm:local}(a) hold. More explicitly, a spherical harmonic of degree $\ell$ has eigenvalue
\[
 E_\ell=K_0\ell(\ell+n-1)
\]
for $-\Delta$, and the normal multiplier in \eqref{eq:sphere-linear} is
\[
 (r_0E_\ell+2n\lambda^2)(E_\ell-nK_0).
\]
The first factor is strictly positive. Thus the normal kernel, and hence the joint kernel, consists exactly of degree-one harmonics. The Killing-field identification above applies with the Lorentzian scalar product. The common coercivity, slice and remainder argument below then proves the estimate and nonlinear conclusion, independently of the endpoint theorem.
\end{proof}
The constants here may depend on $a,R$ and on the law near the model amplitude. No uniform limit as $R\to\infty$ is asserted: $K_0\to0$, and the limiting horospherical model is noncompact. Nor does this result claim a compact proper biharmonic hypersurface in hyperbolic space, since its fixed spectral parameter is strictly less than $-2na^2$, not zero.

\subsection{The proper equal-radius Clifford models}
We first take $c=1$. Put
\[
 M_0=\Sph^a(1/\sqrt2)\times\Sph^b(1/\sqrt2),\quad
 a,b\ge1,\quad D=a-b\ne0,\quad n=a+b.
\]
For $X=(x,y)$ choose $\nu=(-x,y)$. Then $A_0$ is $+\Id$ on the first factor and $-\Id$ on the second, $\tau_0=D$, $S_0=n$ and $\sigma=0$. Let $\Delta_a,\Delta_b$ be the two scalar Laplacians and $\Delta=\Delta_a+\Delta_b$. Formula \eqref{eq:linear-general} becomes
\begin{align}
 L_Nu&=r_0\Delta(\Delta+2n)u
       -D\{2(\Delta_a-\Delta_b)u+4Du\},\label{eq:Clifford-LN}\\
 L_Tu&=(2r_0A_0+D\Id)\nabla(\Delta+2n)u.\label{eq:Clifford-LT}
\end{align}
Consider a tensor product of harmonic degrees $\ell,j$. Write
\[
 \alpha=2\ell(\ell+a-1),\quad
 \beta=2j(j+b-1),\quad E=\alpha+\beta.
\]
The normal multiplier and the squared tangential multiplier are
\begin{align}
 N_{r_0}(\alpha,\beta)&=r_0E(E-2n)+2D(\alpha-\beta)-4D^2,
 \label{eq:normal-multiplier}\\
 T_{r_0}(\alpha,\beta)&=(2n-E)^2
       \{(D+2r_0)^2\alpha+(D-2r_0)^2\beta\}.
 \label{eq:tangent-multiplier}
\end{align}
For a joint harmonic $u$, one has $\Delta_a u=-\alpha u$, $\Delta_bu=-\beta u$ and $\psi=(2n-E)u$. The tangential operator splits into the orthogonal factor components
\[
 L_Tu=(2n-E)\{(D+2r_0)\nabla_a u+(D-2r_0)\nabla_b u\}.
\]
Integration by parts gives
$\|\nabla_a u\|_2^2=\alpha\|u\|_2^2$ and
$\|\nabla_b u\|_2^2=\beta\|u\|_2^2$, proving \eqref{eq:tangent-multiplier}. More generally,
\[
 \langle\nabla_a u,\nabla_a v\rangle_{L^2}
       =\langle u,-\Delta_a v\rangle_{L^2},
\]
and likewise for the second factor. Thus orthogonal joint harmonics remain orthogonal for these gradient pairings, even if their total eigenvalues coincide. Together with Lemma~\ref{lem:sphere-spectrum}, this justifies determining the common kernel mode by mode.

If $D\ne\pm2r_0$ and $E\ne2n$, vanishing of $T_{r_0}$ forces $\alpha=\beta=0$, where $N_{r_0}=-4D^2\ne0$. If $E=2n$, vanishing of $N_{r_0}$ forces $\alpha-\beta=2D$, hence $\alpha=2a$, $\beta=2b$ and $(\ell,j)=(1,1)$. The exceptional values $D=\pm2r_0$ require a separate argument.

\begin{lemma}[The exceptional arithmetic obstruction]\label{lem:arithmetic}
Let $r_0>0$, let $a,b\ge1$ be integers with $a\ne b$, and set $n=a+b$ and $D=a-b$. For integers $\ell,j\ge0$, put
\[
 \alpha=2\ell(\ell+a-1),\qquad \beta=2j(j+b-1).
\]
On this discrete spectral set, the expressions in \eqref{eq:normal-multiplier} and \eqref{eq:tangent-multiplier} vanish simultaneously if and only if $(\ell,j)=(1,1)$, equivalently $(\alpha,\beta)=(2a,2b)$.
\end{lemma}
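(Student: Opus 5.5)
The plan is to read off the joint zero set from the factored form of $T_{r_0}$ in \eqref{eq:tangent-multiplier}, and then use $N_{r_0}$ in \eqref{eq:normal-multiplier} to pin down $(\alpha,\beta)$. The exceptional values $D=\pm2r_0$ are the only place where real arithmetic is needed.

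\emph{Sufficiency.} At $(\ell,j)=(1,1)$ we have $\alpha=2a$ and $\beta=2b$. Hence $E=2n$, so $T_{r_0}=0$, and
\[
 N_{r_0}=0+2D(2a-2b)-4D^2=0 .
\]

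\emph{Necessity: the three cases.} Since $T_{r_0}=0$, either $E=2n$ or $(D+2r_0)^2\alpha+(D-2r_0)^2\beta=0$.

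\textbf{Case $E=2n$.} Then $N_{r_0}=2D(\alpha-\beta)-4D^2$, and $D\ne0$ gives $\alpha-\beta=2D$. Together with $\alpha+\beta=2n$ this yields $\alpha=2a$ and $\beta=2b$. The maps $\ell\mapsto\ell(\ell+a-1)$ and $j\mapsto j(j+b-1)$ are strictly increasing and take the values $a$ and $b$ at $1$. Hence $\ell=j=1$.

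\textbf{Case $E\ne2n$, $D\ne\pm2r_0$.} Both coefficients $(D\pm2r_0)^2$ are positive and $\alpha,\beta\ge0$. So $\alpha=\beta=0$, which gives $N_{r_0}=-4D^2\ne0$, a contradiction.

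\textbf{Case $E\ne2n$, $D=\pm2r_0$.} Since $r_0>0$, exactly one coefficient vanishes. By the symmetry exchanging the factors ($a\leftrightarrow b$, $\alpha\leftrightarrow\beta$, $D\mapsto-D$), it suffices to treat $D=2r_0>0$. Then $\alpha=0$, $\beta\ne2n$, and $\beta>0$, because $\beta=0$ again gives $-4D^2$. Substituting $r_0=D/2$ gives
\[
 N_{r_0}=\tfrac D2\bigl(\beta^2-(2n+4)\beta-8D\bigr).
\]
With $n=2b+D$, vanishing of $N_{r_0}$ is equivalent to
\[
 2D(\beta+4)=\beta(\beta-4b-4).
\]

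\emph{The arithmetic obstruction (main step).} I must show that no integers $b,j\ge1$ and $D\ge1$ satisfy this equation with $\beta=2j(j+b-1)$. Two reductions bound the possibilities.

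First, positivity of $D$ forces $\beta>4b+4$, that is, $\beta+4>4(b+2)$.

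Second, reduce modulo $\beta+4$. Since $\beta\equiv-4$, we get $\beta(\beta-4b-4)\equiv16(b+2)$. Hence $\beta+4$ divides $16(b+2)$.

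Combining the two, $\beta+4=16(b+2)/k$ with $k\in\{1,2,3\}$. Each $k$ gives a quadratic equation in $j$ that is linear in $b$. For example, $k=1$ gives
\[
 b(j-8)=-j^2+j+14 .
\]
This bounds $j$, and then finitely many pairs $(j,b)$ remain. I would check each remaining candidate directly: it fails positivity or integrality of $D$, or it gives $\beta=2n$, which is excluded. The case $b=1$ serves as a sanity check: there the divisibility condition reduces to $j^2+2\mid12$, leaving only $j=1$ (where $D=-1$) and $j=2$ (where $D=0$).

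I expect this Diophantine step to be the main obstacle. The reduction to $k\le3$ makes it finite, but the case checks for $k=2,3$ must be carried out carefully, keeping track of the extra factor $2$ in the divisibility and the excluded value $\beta=2n$.
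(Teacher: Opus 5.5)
Your proposal is correct. Its frame is the paper's: the same three-way case split, and the same factor-exchange symmetry. The paper normalizes to $D=-2r_0$, so that $\beta=0$ and $\alpha$ is free, which mirrors your choice. The quadratic is also the same, since your $\beta^2-(2n+4)\beta-8D$ equals $\beta^2-(2n+4)\beta-16r_0$, the paper's $P$ in the mirrored variable.

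The routes differ only at the Diophantine step. The paper excludes $\ell\le2$ by the sign $P(4(a+1))=-16r_0(a+2)<0$. This does the job of your bound $\beta>4(b+1)$: both force the free degree to be at least $3$. It then solves for the other dimension, $b=\alpha/2-a-4+8(a+2)/(\alpha+4)$, and traps the fraction in $(1,4/3)$ at $\ell=3$ and in $(0,1)$ for $\ell\ge4$. No candidate list is needed.

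Your divisibility-and-enumeration route rests on the same integrality condition and does close, although you left the checks undone:
\begin{itemize}
\item For $k=1$, the pairs with $b$ a positive integer are $(j,b)=(5,2),(6,8),(7,28)$. These give $D=45/2,\,117/2,\,357/2$, none an integer.
\item For $k=3$, the equation $b(3j-8)=-3j^2+3j+10$ has no solution with $b\ge1$.
\item For $k=2$, the equation $b(j-4)=-(j-3)(j+2)$ leaves only $j=3$, $b=0$.
\end{itemize}

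Most of this can be skipped, because your equation $2D(\beta+4)=\beta(\beta-4b-4)$ is equivalent to
\[
 16(b+2)=(\beta+4)\bigl(2D-\beta+4b+8\bigr).
\]
Hence $k=2D-\beta+4b+8$ is even, since $\beta$ is even, and only $k=2$ survives. This is presumably the factor $2$ you flagged.

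Your procedure is mechanical and would adapt to other coefficients. The paper's monotone sandwich is shorter and avoids the bookkeeping.
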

\begin{proof}
Only $D=-2r_0$ or $D=2r_0$ remains. By interchanging the factors we may suppose $D=-2r_0$, so $b=a+2r_0$. For $E\ne2n$, vanishing of the tangential multiplier requires $\beta=0$. Dividing the normal multiplier by $r_0$ gives
\begin{equation}\label{eq:exceptional-P}
 P(\alpha)=\alpha^2-(2n+4)\alpha-16r_0=0,
 \qquad\alpha=2\ell(\ell+a-1).
\end{equation}
The constant term is negative, so the two real roots have opposite signs; in particular there is exactly one positive root. At $\ell=2$ one has $\alpha=4(a+1)$ and $P(\alpha)=-16r_0(a+2)<0$. This value lies below the positive root. Since the allowed $\alpha$ increase with $\ell$, none of the degrees $0,1,2$ is a zero.

For $\ell\ge3$, use $2r_0=b-a$ and $n=a+b$ to rewrite \eqref{eq:exceptional-P} as
\[
 (2\alpha+8)b=\alpha^2-(2a+4)\alpha+8a.
\]
Polynomial division by $2(\alpha+4)$ gives
\begin{equation}\label{eq:integer-obstruction}
 b=\frac\alpha2-a-4+\frac{8(a+2)}{\alpha+4}.
\end{equation}
The first three terms are an integer. At $\ell=3$ the remaining fraction is strictly between $1$ and $4/3$, since $\alpha=6(a+2)$ and $a\ge1$. At $\ell\ge4$ it is strictly between $0$ and $1$, since $\alpha\ge8(a+3)$. It is never an integer, contradicting \eqref{eq:integer-obstruction}. The case $D=2r_0$ follows by symmetry. Together with the previously treated branch $E=2n$, this exhausts the simultaneous vanishing conditions.
\end{proof}
It follows that the common kernel $\KK$ consists of degree-$(1,1)$ harmonics. If $B:\R^{b+1}\to\R^{a+1}$ is any linear map, the ambient infinitesimal rotation
\[
 K_B(x,y)=(By,-B^{\mathsf T}x)
\]
has normal component $-2\ip{x}{By}$ on $M_0$. These functions span exactly $\KK$, of dimension $(a+1)(b+1)$. Rotations within either factor are tangent to $M_0$ and supply no additional normal component.

For $c>0$, compare the curvature-$c$ model with the unit-sphere model by multiplying its metric by $c$. The scaling rule of Proposition~\ref{prop:scaling}, with $b_0=\sqrt c$, transforms the equation with $\sigma=0$ into the unit-sphere equation with response $\widetilde F(t)=F(\sqrt c\,t)$. Its logarithmic derivative at $|D|$ is the same positive number as that of $F$ at $\sqrt c|D|$. Normal graphs and their smallness norms rescale by fixed constants. The kernel conclusion therefore holds for the model stated in Theorem~\ref{thm:local}.

\section{Coercivity and nonlinear rigidity}\label{sec:nonlinear-local}
The preceding section identifies the common kernel $\KK$ with the normal components of ambient Killing fields. We now complete the independent local proof in three steps: control all modes outside $\KK$, move each small graph into $\KK^\perp$, and absorb the nonlinear remainder.
\subsection{Spectral and geometric Sobolev norms}
A rotation-field identity gives a direct comparison between the spectral Sobolev norm and covariant derivatives.
\begin{lemma}[A rotation-field Sobolev identity]\label{lem:rotation-Sobolev}
On a round sphere of radius $R$, set
$Z_{ij}=R^{-1}(x_i\partial_j-x_j\partial_i)$, $i<j$.
On a product use all such fields on each factor. If $\mathcal Z$ denotes this finite family, then
\begin{equation}\label{eq:rotation-identities}
 \sum_{Z\in\mathcal Z} Z\otimes Z=g^{-1},\quad
 Z^*=-Z,\quad \Delta=\sum_{Z\in\mathcal Z}Z^2,\quad [Z,\Delta]=0.
\end{equation}
For smooth $u$ and every integer $k\ge0$,
\begin{equation}\label{eq:word-Sobolev}
 \sum_{Z_1,\ldots,Z_k\in\mathcal Z}
 \|Z_1\cdots Z_ku\|_2^2
       =\langle(-\Delta)^ku,u\rangle_{L^2}.
\end{equation}
In particular, define the spectral norm by
\[
 \|u\|_{H^4}^2:=\sum_E(1+E)^4\|u_E\|_2^2.
\]
There are constants $c_1,c_2>0$, independent of $u$, such that
\[
 c_1\|u\|_{H^4}^{\,2}
 \le\sum_{j=0}^4\|\nabla^ju\|_2^2
 \le c_2\|u\|_{H^4}^{\,2}.
\]
\end{lemma}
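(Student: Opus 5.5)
The plan is to prove the four identities in \eqref{eq:rotation-identities} on a single round sphere $\Sph^m(R)\subset\R^{m+1}$, pass to products, and then derive \eqref{eq:word-Sobolev} and the norm comparison from them.

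\emph{First identity.} Each $Z_{ij}$ is the restriction of a linear Killing field. At a point $x$ with $|x|=R$, for tangent covectors $\xi$ we compute
\[
 \sum_{i<j}\xi(Z_{ij})^2=R^{-2}\sum_{i<j}(x_i\xi_j-x_j\xi_i)^2
 =R^{-2}\bigl(|x|^2|\xi|^2-\ip x\xi^2\bigr)=|\xi|^2,
\]
by the Lagrange identity. Hence $\sum Z\otimes Z=g^{-1}$.

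\emph{Skew-adjointness and commutation.} A Killing field is divergence free, so $Z^*=-Z$ in $L^2$. A Killing field generates isometries, which commute with $\Delta$, so $[Z,\Delta]=0$.

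\emph{The Laplacian.} For $\Delta=\sum Z^2$ we write
\[
 \sum Z^2u=\sum\Hess u(Z,Z)+\sum(\nabla_ZZ)u .
\]
The first sum is $\tr_g\Hess u=\Delta u$ by the first identity. For the second, the extrinsic derivative of $Z_{ij}$ along itself is a linear map applied to $x$: $D_{Z}Z=R^{-2}(x_i e_j-x_je_i)$ evaluated through the skew matrix, and summing gives a radial multiple of $x$. Its tangential projection $\sum\nabla_ZZ$ therefore vanishes. This is the one genuine computation, and I expect it to be the main (mild) obstacle; a cleaner route is $\Div\bigl(\sum_Z (Zu)Z\bigr)=\Div\nabla u$ together with $\Div Z=0$.

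\emph{Products.} On $\Sph^a\times\Sph^b$ we take the union of the two families. Fields from different factors commute, and both $g^{-1}$ and $\Delta$ split as sums over the factors, so all four identities persist.

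\emph{The word identity \eqref{eq:word-Sobolev}.} We argue by induction on $k$. We have
\[
 \sum_{Z_1,\ldots,Z_k}\|Z_1\cdots Z_ku\|^2
 =\sum_{Z_1,\ldots,Z_{k-1}}\Bigl\langle -\sum_{Z_k}Z_k^2\,(Z_1\cdots Z_{k-1}u),\,Z_1\cdots Z_{k-1}u\Bigr\rangle ,
\]
after moving $Z_k$ across using $Z^*=-Z$; the inner sum is $-\Delta$ applied to the word. Since $[Z,\Delta]=0$, we may commute $-\Delta$ to the right, and the induction hypothesis applied to $(-\Delta)^{1/2}$-type pairings reduces the sum to $\langle(-\Delta)^k u,u\rangle$. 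More concretely, we pair against $(-\Delta)u$ and use the self-adjointness of $\Delta$.

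\emph{Norm comparison.} Expanding in the complete joint eigenbasis of Lemma~\ref{lem:sphere-spectrum} gives
\[
 \sum_{k\le4}\langle(-\Delta)^ku,u\rangle\asymp\sum_E(1+E)^4\|u_E\|^2 ,
\]
so it suffices to compare $\sum_{k\le4}\sum_{\text{words}}\|Z\cdots Zu\|^2$ with $\sum_{j\le4}\|\nabla^ju\|^2$.

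For the bound of words by covariant derivatives, we expand $Z_1\cdots Z_ku$ as a contraction of $\nabla^ju$, $j\le k$, with derivatives of the smooth fields $Z$. These coefficients are bounded on the compact manifold.

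For the reverse bound, we write $\nabla^ju$ in terms of words. The first identity gives $|\nabla^ju|^2=\sum_{Z_1,\ldots,Z_j}|\nabla^ju(Z_1,\ldots,Z_j)|^2$. Each $\nabla^ju(Z_1,\ldots,Z_j)$ equals $Z_1\cdots Z_ju$ plus lower-order terms involving $\nabla^iu$ with $i<j$, again with smooth bounded coefficients. Induction on $j$ then completes the comparison.
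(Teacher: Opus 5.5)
Your proposal is correct. For the four identities in \eqref{eq:rotation-identities} it follows the paper: the Lagrange identity, Killing fields being divergence free and commuting with $\Delta$, and taking the divergence of $\sum_Z(Zu)Z=\nabla u$.

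The genuine difference is in the proof of \eqref{eq:word-Sobolev}. The paper argues spectrally. It notes that every $Z$ preserves each eigenspace of $-\Delta$ and proves $\sum_{Z,W}\|ZWu_E\|_2^2=E\sum_W\|Wu_E\|_2^2$ on a single eigenspace. It then reassembles the general case using orthogonality of eigenspaces under words and Parseval. You instead run the induction directly on smooth $u$, using only $Z^*=-Z$, $[Z,\Delta]=0$, $\Delta=\sum Z^2$ and self-adjointness of $\Delta$.

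What your route buys: no interchange of words with an infinite eigen-expansion is needed. Lemma~\ref{lem:sphere-spectrum} then enters only once, to turn $\sum_{k\le4}\langle(-\Delta)^ku,u\rangle$ into the spectral norm. What the paper's route buys: eigenspace invariance under words is explicit, and each inductive step is a scalar computation with no polarization.

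Your reverse comparison is an intrinsic version of the paper's atlas argument, which writes coordinate derivatives through words. You use the frame identity $|\nabla^ju|^2=\sum_{Z_1,\ldots,Z_j}\nabla^ju(Z_1,\ldots,Z_j)^2$ together with induction on $j$.

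Three points should be tightened when you write this up.

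\begin{itemize}
\item In $Z_1\cdots Z_ku$ the outermost letter is $Z_1$, not $Z_k$. Your first displayed equality in the word-identity step therefore holds only after reindexing the full sum over words.
\item The appeal to ``$(-\Delta)^{1/2}$-type pairings'' should be made precise. One option is the polarized inductive hypothesis $\sum_W\langle Wv,Wu\rangle=\langle(-\Delta)^{k-1}v,u\rangle$, which is symmetric because $\Delta$ is self-adjoint, applied with $v=-\Delta u$. A simpler option is to peel off the innermost letter: $\sum_W\|WZu\|_2^2=\langle(-\Delta)^{k-1}Zu,Zu\rangle=-\langle(-\Delta)^{k-1}Z^2u,u\rangle$, then sum over $Z$.
\item Your displayed formula for $D_ZZ$ is garbled. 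In fact $D_{Z_{ij}}Z_{ij}=-R^{-2}(x_ie_i+x_je_j)$, whose sum over $i<j$ is $-mR^{-2}x$. Your conclusion that the sum is radial is nevertheless right.
\end{itemize}
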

\begin{proof}
For a tangent vector $w$ at $x\in\Sph^m(R)$,
\[
 \sum_{i<j}\ip{Z_{ij}}{w}^2
 =\frac1{R^2}\sum_{i<j}(x_iw_j-x_jw_i)^2
 =\frac{|x|^2|w|^2-\ip{x}{w}^2}{R^2}=|w|^2.
\]
Polarization gives the tensor identity, and applying it to each factor gives the product version. Rotations preserve the metric and volume, proving skew-adjointness and commutation with $\Delta$. The tensor identity and divergence then give the sum-of-squares formula. For \eqref{eq:word-Sobolev}, the case $k=1$ is integration by parts. Each rotation field preserves every $-\Delta$ eigenspace because it commutes with $\Delta$. If $u_E$ lies in the eigenspace of eigenvalue $E$, and $W$ ranges over words of length $k$, then
\[
 \sum_{Z,W}\|ZWu_E\|_2^2
 =\sum_W\langle-\Delta Wu_E,Wu_E\rangle
 =E\sum_W\|Wu_E\|_2^2.
\]
Starting from length zero, induction gives $E^k\|u_E\|_2^2$. Distinct eigenspaces remain orthogonal under all words; Parseval and integration by parts give the stated formula for smooth $u$. Hence
\[
 \sum_{k=0}^4\binom4k
 \sum_{Z_1,\ldots,Z_k}\|Z_1\cdots Z_ku\|_2^2
       =\sum_E(1+E)^4\|u_E\|_2^2.
\]
Each word is a contraction of derivatives of $u$ of order at most its length, with bounded smooth coefficients. Conversely the first identity in \eqref{eq:rotation-identities} expresses every local coordinate derivative as a smooth linear combination of the $Z$'s. Iterating and using the product rule expresses all coordinate derivatives through order four as bounded-coefficient combinations of words. A finite atlas and the equivalence of coordinate and covariant derivatives prove both norm comparisons.
\end{proof}
\subsection{Coercivity of the joint operator}
For either compact model, let $\KK$ be the common kernel already computed and use the spectral norm in Lemma~\ref{lem:rotation-Sobolev}. On a Clifford harmonic mode the joint squared weight is exactly
\[
 W_{\mathrm{Cl}}(\alpha,\beta)
   =|N_{r_0}(\alpha,\beta)|^2+T_{r_0}(\alpha,\beta).
\]
The quantity $T_{r_0}$ already records the squared $L^2$ norm of the tangential operator on a unit-normalized mode; it is not squared again. On a geodesic sphere the corresponding weight, from \eqref{eq:sphere-linear}, is
\[
 W_{\mathrm{sph}}(E)
 = (r_0E+2n\lambda^2)^2(E-nK_0)^2
   +\lambda^2(2r_0+n)^2E(E-nK_0)^2.
\]
In either model the normal multiplier has the form
$N=r_0E^2+R$ with $|R|\le C(1+E)$; in the Clifford case use $|\alpha-\beta|\le E$. Choose $E_*\ge1$ so that $C(1+E)\le r_0E^2/2$ for $E\ge E_*$. Then
\[
 W\ge|N|^2\ge\tfrac14r_0^2E^4
              \ge\tfrac1{64}r_0^2(1+E)^4\qquad(E\ge E_*).
\]
There are only finitely many joint harmonic spaces with $E<E_*$. By the common-kernel calculations, the minimum
\[
 c_{\mathrm{low}}=\min_{\substack{E<E_*\\\text{mode outside }\KK}}
                       \frac{W}{(1+E)^4}
\]
is positive. If this finite set is empty, take $c_{\mathrm{low}}=1$;
the low-frequency assertion is then vacuous. Put $c_0=\min\{c_{\mathrm{low}},r_0^2/64\}$ and take $u\perp\KK$. Its harmonic expansion contains only modes outside $\KK$, so the scalar and factor-gradient orthogonality gives
\[
 \|L_Nu\|_2^2+\|L_Tu\|_2^2
     =\sum_{\text{modes}}W\|u_{\text{mode}}\|_2^2
     \ge c_0\sum_{\text{modes}}(1+E)^4\|u_{\text{mode}}\|_2^2.
\]
The last sum is $\|u\|_{H^4}^2$. Taking square roots gives
\begin{equation}\label{eq:coercivity}
 \norm{u}_{H^4}\le C_0\bigl(\norm{L_Nu}_{L^2}+\norm{L_Tu}_{L^2}\bigr),
 \qquad u\perp\KK.
\end{equation}
This argument does not assert that $L_N$ alone has no extra low-frequency zeros on every Clifford model; the tangential defect is essential to the all-parameter Clifford argument. On a geodesic sphere, by contrast, $r_0E+2n\lambda^2>0$, so the normal operator alone already has precisely the degree-one kernel.

\subsection{A finite-dimensional slice}
The required normal projection and signed height come from the following compact form of the tubular neighborhood theorem. We give the argument in the Riemannian ambient setting actually used here.
\begin{standard}[Uniform normal tubular neighborhood]\label{std:tube}
Let $M_0$ be a compact smooth embedded hypersurface without boundary in a smooth Riemannian manifold, and suppose that it has a smooth global unit normal $\nu_0$. For some $\rho_0>0$, the map
\[
 \mathscr E:M_0\times(-\rho_0,\rho_0)\longrightarrow N,
 \qquad \mathscr E(x,t)=\exp_x(t\nu_0(x)),
\]
is a diffeomorphism onto an open neighborhood of $M_0$. Its inverse has smooth components $(\pi,q_0)$, called normal projection and signed height.
\end{standard}
\begin{proof}
The exponential map is defined and smooth on a uniform neighborhood of the zero section over the compact set $M_0$. At $(x,0)$ its differential is
$(v,a)\mapsto v+a\nu_0(x)$, an isomorphism. Standard theorem~\ref{std:inverse} and compactness give a uniform smaller interval on which $\mathscr E$ is a local diffeomorphism. If no such interval gave injectivity, there would be distinct pairs $(x_j,t_j)$ and $(y_j,r_j)$ with $|t_j|+|r_j|\to0$ and equal images. Compactness gives subsequential limits $x,y\in M_0$. The geodesic lengths imply
$\dist_N(x_j,y_j)\le|t_j|+|r_j|$, so embeddedness gives $x=y$. Both pairs then approach $(x,0)$, contradicting the local injectivity there. Thus a uniform injective restriction exists. An injective local diffeomorphism has an open image and a smooth inverse.
\end{proof}
\begin{lemma}[Killing-orthogonal graph slice]\label{lem:slice}
For either compact model, every sufficiently small smooth normal graph $u$ can be moved by a small ambient isometry to a global normal graph $v$ with $v\perp\KK$ in $L^2(M_0)$ and
\[
 \|v\|_{C^4}\le C\|u\|_{C^4}.
\]
The construction uses no graph derivatives beyond order four.
\end{lemma}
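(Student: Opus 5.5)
The plan is a finite-dimensional implicit function argument on the isometry group acting on graphs. Let $\mathfrak k$ be the Lie algebra of ambient Killing fields, and let $\mathfrak k_0\subset\mathfrak k$ be the subspace of fields tangent to $M_0$: rotations of each factor for Clifford, the stabilizer of the center for geodesic spheres. Choose a complement $\mathfrak m$, of dimension $\dim\KK$. By Section~\ref{sec:local}, the map $\xi\mapsto\ip{K_\xi}{\nu_0}$ is an isomorphism $\mathfrak m\to\KK$.

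For $\xi\in\mathfrak m$ small and $u$ small in $C^4$ (hence in $C^1$), apply the isometry $\exp(\xi)$ to the graph of $u$. The image stays inside the tube of Standard theorem~\ref{std:tube}. Reparametrize it by normal projection $\pi$: the composite $\pi\circ\exp(\xi)\circ X_u$ is a $C^1$-small perturbation of the identity of $M_0$. It is therefore a diffeomorphism for small $(\xi,u)$, since a local diffeomorphism close to the identity on a compact manifold is a covering of degree one. The transformed graph is the function
\[
 v=\mathcal G(\xi,u)=q_0\circ\exp(\xi)\circ X_u\circ\bigl(\pi\circ\exp(\xi)\circ X_u\bigr)^{-1}.
\]
This $v$ is a global normal graph. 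Composition and inversion involve only derivatives up to order four of $u$, so $v\in C^4$ with bounds depending on $\|u\|_{C^4}$.

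Next define $\Psi(\xi,u)=P_{\KK}\,\mathcal G(\xi,u)\in\KK$, where $P_\KK$ is the $L^2$ projection. Three facts feed the implicit function theorem.
\begin{itemize}
\item $\mathcal G(0,0)=0$.
\item Differentiating $\mathcal G$ in $\xi$ at $(0,0)$ gives the normal component $\ip{K_\xi}{\nu_0}$, because tangential components are absorbed by the reparametrization. Hence $\partial_\xi\Psi(0,0)$ is the isomorphism $\mathfrak m\to\KK$ above.
\item $P_\KK$ is an $L^2$ projection onto a finite-dimensional space of smooth functions, so $\Psi$ is $C^1$ in $(\xi,u)$ as a map from $\mathfrak m\times C^0$-small graphs into $\KK$. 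Regularity in $u$ only needs $C^1$ control of $\mathcal G$ in $u$, measured in $L^2$.
\end{itemize}
The finite-dimensional implicit function theorem, applied with the parameter $u$ in a Banach space, then gives $\xi(u)$ with $|\xi(u)|\le C\|u\|_{C^0}$ and $\Psi(\xi(u),u)=0$, so $v\perp\KK$.

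Finally, the $C^4$ estimate $\|v\|_{C^4}\le C(\|u\|_{C^4}+|\xi|)\le C\|u\|_{C^4}$ follows from a first-order Taylor expansion of the smooth composite map. For $u=0$ and $\xi=0$ it gives $v=0$, and the dependence is Lipschitz on small balls.

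I expect the main obstacle to be the regularity bookkeeping. The map $u\mapsto\mathcal G(\xi,u)$ loses derivatives through composition with the inverse diffeomorphism, so it is not differentiable from $C^4$ to $C^4$. To get around this, I will apply the implicit function theorem only to the finite-dimensional projection $\Psi$, measured in $C^1\to L^2$, where differentiability holds. The $C^4$ bound on $v$ then comes afterward from direct composition estimates, not from the implicit function theorem.
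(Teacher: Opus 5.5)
Your proposal is correct and follows essentially the paper's route. In both, you reparametrize the moved graph by $\pi\circ I_z\circ X_u$, solve the finite-dimensional equation $P_{\KK}v=0$ in the isometry parameter using that its derivative at the origin is the invertible Gram pairing of the Killing normal components, and obtain the $C^4$ bound from $\|h_z\|_{C^4}\le C(\|u\|_{C^4}+|z|)$ together with a $C^4$ bound on the inverse reparametrization.

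The differences are packaging. The paper replaces your Banach-parameter implicit function theorem by a direct contraction for $z\mapsto z-G^{-1}\mathcal B(z,u)$. It writes the moments as $\int h_z\,k_j(p_z)\operatorname{Jac}(p_z)$, so it needs only continuity in $u\in C^1$ and never differentiates $p_z^{-1}$ in $u$. Note also that your parameter domain must consist of $C^1$-small graphs, not merely $C^0$-small ones as in your third bullet, for the reparametrization to be a diffeomorphism.
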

\begin{proof}
Both compact models are embedded and have the displayed global normals, so Standard theorem~\ref{std:tube} applies. Here is the finite-dimensional construction of the slice. Let $k_1,\ldots,k_q$ be a basis of $\KK$ realized by normal components of ambient Killing fields, and let $I_z$ be the composition of their flows. In a tubular neighborhood write $\pi$ for the normal projection onto $M_0$ and $q_0$ for signed height. For a given graph put
\[
 p_z=\pi\circ I_z\circ X_u,\qquad
 h_z=q_0\circ I_z\circ X_u,\qquad
 v_z=h_z\circ p_z^{-1}.
\]
For $|z|+\|u\|_{C^1}$ small, $p_z$ is $C^1$-close to the identity. Choose a finite collection of coordinate balls, with slightly larger coordinate balls, covering $M_0$. In each larger ball the coordinate derivative of $p_z$ is uniformly close to the identity. Integrating along straight segments in smaller convex coordinate balls gives injectivity for sufficiently close pairs of points; a Lebesgue number of the finite cover makes this distance threshold uniform. Pairs farther apart cannot have the same image, since $\sup_x\dist(p_z(x),x)$ can be chosen smaller than half that threshold. Thus $p_z$ is globally injective. Its invertible differential also makes it a local diffeomorphism, so its image is open. The image is closed because $M_0$ is compact. Since each model $M_0$ is connected and the image is nonempty, the image is all of $M_0$. Hence $p_z$ is a global diffeomorphism, with its smooth inverse supplied locally by the ordinary inverse function theorem. The moments can be written without differentiating the inverse reparametrization:
\begin{equation}\label{eq:slice-moment}
 \mathcal B_j(z,u)=\int_{M_0}h_z(x)k_j(p_z(x))\operatorname{Jac}(p_z)(x)\dd\mu_0(x)
                =\int_{M_0}v_zk_j\dd\mu_0.
\end{equation}
They depend continuously on $(z,u)$ for $u$ in $C^1$, are differentiable in $z$, and
\[
 D_z\mathcal B(0,0)=G,
 \quad G_{ij}=\int_{M_0}k_ik_j\dd\mu_0,
 \quad G>0.
\]
To check the derivative, at $(z,u)=(0,0)$ one has $h_0=0$, $p_0=\Id$ and $\operatorname{Jac}(p_0)=1$. The variation of height is
$\partial_{z_i}h_z|_{(0,0)}=k_i$. All derivatives of the other two factors in \eqref{eq:slice-moment} are multiplied by $h_0$, so they vanish, leaving
$\partial_{z_i}\mathcal B_j(0,0)=\int k_i k_j$. The Gram matrix is positive definite because the $k_i$ are linearly independent. Continuity of these derivatives follows from the same integral, which uses only $u$ and its first derivatives. Uniformly on a small neighborhood,
$\|I-G^{-1}D_z\mathcal B\|\le1/2$ and
$|G^{-1}\mathcal B(0,u)|\le C\|u\|_{C^1}$.
Set $\Xi_u(z)=z-G^{-1}\mathcal B(z,u)$ and $R_u=2C\|u\|_{C^1}$. The first estimate makes $\Xi_u$ Lipschitz with constant at most $1/2$. For $|z|\le R_u$,
\[
 |\Xi_u(z)|\le|\Xi_u(0)|+\tfrac12|z|
       \le C\|u\|_{C^1}+\tfrac12R_u=R_u.
\]
The parameter ball is complete and is mapped into itself; after reducing $\|u\|_{C^1}$ it lies within the neighborhood where all estimates hold. Standard theorem~\ref{std:contraction} gives a fixed point, equivalently $\mathcal B(z,u)=0$. Thus $v_z\perp\KK$ and
\[
 |z|\le C\|u\|_{C^1},\qquad
 \|v_z\|_{C^4}\le C\|u\|_{C^4}.
\]
For the second estimate, the smooth tube maps give
\[
 \|h_z\|_{C^4}\le C(\|u\|_{C^4}+|z|).
\]
The inverse of $Dp_z$ is uniformly bounded. In coordinates, with $q=p_z^{-1}$,
\[
 Dq=(Dp_z\circ q)^{-1},\qquad
 D^2q=-(Dp_z\circ q)^{-1}(D^2p_z\circ q)[Dq,Dq].
\]
For completeness, fix a coordinate direction $\xi$ and put
$q_j=D^jq[\xi,\ldots,\xi]$, $p=p_z$; all derivatives of $p$ in the next
two identities are evaluated at $q(x)$. Differentiating $p\circ q=\Id$
three and four times gives
\begin{align}
 Dp\,q_3+3D^2p[q_1,q_2]+D^3p[q_1,q_1,q_1]&=0,
                 \label{eq:inverse-third}\\
 \begin{split}
 Dp\,q_4+4D^2p[q_1,q_3]+3D^2p[q_2,q_2]
 &+6D^3p[q_1,q_1,q_2]\\
 &+D^4p[q_1,q_1,q_1,q_1]=0.
 \end{split}\label{eq:inverse-fourth}
\end{align}
Mixed-direction identities follow by polarization of these symmetric
multilinear derivatives. At each order $j$, the sole term containing $D^jq$
is $(Dp\circ q)D^jq$; every other term contains derivatives of $p$ of order
at most $j$ and derivatives of $q$ of order strictly less than $j$.
The smooth tube maps give a uniform $C^4$ bound for $p$ when
$|z|+\|u\|_{C^4}$ is small, and $(Dp)^{-1}$ is uniformly bounded.
Solving the displayed identities successively therefore bounds $q$ in $C^4$
on the fixed finite atlas, without a fifth derivative of $p$ or $u$.
Every term in a derivative through order four of $h_z\circ q$ contains a
derivative of $h_z$ of order at most four multiplied by uniformly bounded
derivatives of $q$. Consequently
\[
 \|h_z\circ q\|_{C^4}\le C\|h_z\|_{C^4}
 \le C(\|u\|_{C^4}+|z|)\le C\|u\|_{C^4},
\]
which proves the claimed estimate. The construction uses graph derivatives only through order four and finite-dimensional differentiability in the isometry parameter. Ambient isometries preserve $F,c,\sigma$ and the equation.
\end{proof}

\subsection{Nonlinear remainder and the quantitative conclusion}
We first make the derivative count in the graph equation explicit. For a scalar
$u$, the notation $j^ku$ means the tuple consisting of $u$ and all its coordinate
derivatives through order $k$, taken in a fixed finite atlas on $M_0$.

\begin{lemma}[Exact normal-graph geometry and the fourth-order principal part]
\label{lem:graph-jets}
For either fixed compact model, let $\mathcal P_r$ denote parallel transport
along $r\mapsto\exp_{X_0(x)}(r\nu_0(x))$, and let $\nu_{\parallel}$ be the
unit tangent to this normal geodesic. Put
\[
 Q(r)=C_c(r)\Id-s_c(r)A_0,
 \qquad W_u=\sqrt{1+|Q(u)^{-1}\nabla_0u|_{g_0}^2}.
\]
For all sufficiently small $C^2$ graphs, $Q(u)$ is uniformly invertible and
\begin{align}
 dX_u(\xi)&=\mathcal P_u(Q(u)\xi)+du(\xi)\nu_{\parallel},
                   \label{eq:graph-differential}\\
 g_u(\xi,\eta)&=g_0(Q(u)\xi,Q(u)\eta)+du(\xi)du(\eta),
                   \label{eq:graph-metric}\\
 \nu_u&=W_u^{-1}\{\nu_{\parallel}
             -\mathcal P_u(Q(u)^{-1}\nabla_0u)\}.
                   \label{eq:graph-normal}
\end{align}
In each fixed coordinate chart there are smooth functions $B_{ij},b$ of
$(x,u,\partial u)$ such that
\begin{equation}\label{eq:graph-trace-jets}
 h^u_{ij}=W_u^{-1}\partial_{ij}u+B_{ij}(x,u,\partial u),
 \qquad
 \tau_u=\frac{g_u^{ij}}{W_u}\partial_{ij}u+b(x,u,\partial u).
\end{equation}
Repeated coordinate indices are summed. In particular,
\begin{equation}\label{eq:graph-fourth-principal}
 \mathcal F_N(u)=\frac{\Phi'(\tau_u)}{W_u}
                   g_u^{ij}g_u^{ab}\partial_{ijab}u
                   +\mathscr R_N(x,j^3u),
 \qquad
 \mathcal F_T(u)=\mathscr P_T(x,j^3u),
\end{equation}
where the remainder functions are smooth on a small jet neighborhood.
There are fixed constants $c_*,C_*,C>0$ such that
\begin{equation}\label{eq:graph-uniform-domain}
 c_*g_0\le g_u\le C_*g_0,
 \qquad \|\tau_u-\tau_0\|_{\infty}\le C\|u\|_{C^2},
 \qquad |\tau_u|\ge |\tau_0|/2
\end{equation}
after reducing the $C^2$ neighborhood. On a fixed sufficiently small
fourth-jet neighborhood, both defects are smooth functions of $j^4u$ with
uniform bounds on their second derivatives in the jet variables.
\end{lemma}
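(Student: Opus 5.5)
I will work in Fermi coordinates adapted to the model, that is, through the diffeomorphism $\mathscr E(x,r)=\exp_{X_0(x)}(r\nu_0(x))$ of Standard theorem~\ref{std:tube}. The whole argument rests on the Jacobi-field description of $d\mathscr E$ in constant curvature. For fixed $x$ and a tangent vector $\xi$, the field $J(r)=d\mathscr E_{(x,r)}(\xi,0)$ is a Jacobi field along the normal geodesic. Its initial data are $J(0)=\xi$ and $J'(0)=\bar\nabla_\xi\nu_0=-A_0\xi$. The Jacobi equation is $J''+c\,J^\perp=0$, and $J$ stays orthogonal to $\nu_\parallel$. Hence, in parallel transport,
\[
J(r)=\mathcal P_r\bigl(C_c(r)\xi-s_c(r)A_0\xi\bigr)=\mathcal P_r(Q(r)\xi).
\]
The radial direction maps to $\nu_\parallel$. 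The chain rule applied to $X_u(x)=\mathscr E(x,u(x))$ then gives \eqref{eq:graph-differential}.

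Since $\mathcal P_u$ is an isometry and $\nu_\parallel\perp\mathcal P_u(T_xM_0)$, the metric formula \eqref{eq:graph-metric} follows at once. Uniform invertibility of $Q(u)$ holds because $Q(0)=\Id$ and $Q$ is smooth in $r$ with coefficients bounded on the compact model. This gives the bounds $c_*g_0\le g_u\le C_*g_0$ once $\|u\|_{C^1}$ is small.

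For the normal, I take the candidate $\nu_\parallel-\mathcal P_u(Q^{-1}\nabla_0u)$. Pairing it with $dX_u(\xi)$ gives
\[
du(\xi)-g_0\bigl(Q\xi,Q^{-1}\nabla_0u\bigr)=du(\xi)-g_0(\xi,\nabla_0u)=0,
\]
using the $g_0$-symmetry of $Q$, which commutes with $A_0$. Its squared length is $W_u^2$, which proves \eqref{eq:graph-normal}. Its orientation is continuous and agrees with $\nu_0$ at $u=0$.

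For \eqref{eq:graph-trace-jets}, I would compute $h^u_{ij}=\langle\bar\nabla_{\partial_i}dX_u(\partial_j),\nu_u\rangle$. Differentiating \eqref{eq:graph-differential} once more, the only second derivative of $u$ is $\partial_{ij}u\,\nu_\parallel$, which comes from $\partial_i(du(\partial_j))$. All other terms involve $\bar\nabla$ of the smooth ambient fields $\mathcal P_r(Q(r)\partial_j)$ and $\nu_\parallel$, evaluated at $r=u(x)$ and differentiated along $dX_u(\partial_i)$. Those depend smoothly on $(x,u,\partial u)$. Pairing with $\nu_u$ produces the coefficient $W_u^{-1}$ on $\partial_{ij}u$, and everything else collects into $B_{ij}$. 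Tracing with $g_u^{ij}$, itself a smooth function of $(x,u,\partial u)$, gives the formula for $\tau_u$. The estimate $\|\tau_u-\tau_0\|_\infty\le C\|u\|_{C^2}$ follows by the mean value theorem in the jet variables, since the jet map is smooth and takes the value $\tau_0$ at the zero jet. Shrinking the $C^2$ neighborhood then gives $|\tau_u|\ge|\tau_0|/2$, so $\Phi$ is smooth on the range of $\tau_u$ and sign changes cannot occur.

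For \eqref{eq:graph-fourth-principal}, I expand $\Delta_u\Phi(\tau_u)=g_u^{ab}(\partial_{ab}-\Gamma^{u,k}_{ab}\partial_k)\Phi(\tau_u)$. The Christoffel symbols of $g_u$ depend on $j^2u$, and $\partial_k\Phi(\tau_u)$ depends on $j^3u$. In $\partial_{ab}\Phi(\tau_u)=\Phi'(\tau_u)\partial_{ab}\tau_u+\Phi''(\tau_u)\partial_a\tau_u\partial_b\tau_u$, the only fourth-order term comes from differentiating the principal part of $\tau_u$ twice. This yields $\Phi'(\tau_u)W_u^{-1}g_u^{ij}g_u^{ab}\partial_{ijab}u$ plus terms in $j^3u$. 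The potential term $(S_u-nc+\sigma)\Phi(\tau_u)$ is second order, and $\mathcal F_T$ involves only $\nabla_u\tau_u$, so it is a function of $j^3u$. Smoothness of the remainders and uniform bounds on their second jet derivatives follow because all of them are finite compositions of smooth functions on a compact neighborhood of the zero jet over the compact manifold $M_0$, covered by a fixed finite atlas.

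I expect the main obstacle to be the orientation and sign bookkeeping in the Jacobi-field step. One has to check that the sign convention $A=-\bar\nabla\nu$ produces $-s_cA_0$ rather than $+s_cA_0$ in $Q$, which is consistent with the linearization $\dot A$ in \eqref{eq:variation}. One also has to keep the counting of derivatives honest: for instance, $\nu_u$ already contains $\partial u$, so its derivative contributes only second-order terms to $h^u$, never third-order ones.
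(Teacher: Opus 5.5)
Your proposal is correct and follows essentially the same route as the paper. Both arguments use the Jacobi field $J_\xi(r)=\mathcal P_r(Q(r)\xi)$ with $J_\xi'(0)=-A_0\xi$, the chain rule for $X_u(x)=\mathscr E(x,u(x))$, the orthogonality check for $\nu_u$ via self-adjointness of $Q(u)$, the isolation of $\partial_{ij}u\,\nu_\parallel$ paired against $\langle\nu_\parallel,\nu_u\rangle=W_u^{-1}$, and the derivative count in $\Delta_u\Phi(\tau_u)$.

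One point should be stated more carefully. The bound $|\tau_u|\ge|\tau_0|/2$ alone does not make $\Phi$ smooth on the range of $\tau_u$, because $F$ is only assumed smooth near $|\tau_0|$. You should shrink the $C^2$ neighborhood so that $|\tau_u|$ stays in a fixed compact subinterval of that domain, as the paper does. Your estimate $\|\tau_u-\tau_0\|_\infty\le C\|u\|_{C^2}$ gives this immediately.
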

\begin{proof}
Define the smooth tubular map
$\Xi(x,r)=\exp_{X_0(x)}(r\nu_0(x))$. At fixed $x$, the variation in an initial
direction $\xi\in T_xM_0$ is a Jacobi field satisfying
\[
 \bar\nabla_r^2J_\xi+cJ_\xi=0,
 \qquad J_\xi(0)=dX_0(\xi),\qquad
 \bar\nabla_rJ_\xi(0)=-dX_0(A_0\xi).
\]
The initial field and its derivative are orthogonal to the normal geodesic.
In parallel identification the unique solution is
$J_\xi(r)=\mathcal P_r(Q(r)\xi)$, because $C_c'=-cs_c$ and $s_c'=C_c$.
Applying the chain rule to $X_u(x)=\Xi(x,u(x))$ proves
\eqref{eq:graph-differential}. Its two terms are orthogonal, giving
\eqref{eq:graph-metric}.
Since $Q(r)$ is self-adjoint, the numerator on the right of
\eqref{eq:graph-normal} is orthogonal to $dX_u(\xi)$:
\[
 du(\xi)-g_0(Q(u)^{-1}\nabla_0u,Q(u)\xi)=0.
\]
Its length is $W_u$, and it equals $\nu_0$ at $u=0$, proving the claimed
choice of normal.

In fixed coordinates the second derivative of $\Xi(x,u(x))$, interpreted
covariantly in the ambient manifold, has the form
\[
 \bar\nabla_i\partial_jX_u
     =\partial_{ij}u\,\nu_{\parallel}
       +\mathcal B_{ij}(x,u,\partial u).
\]
Indeed, the other terms contain only $\Xi_{ij}$, $\Xi_{ir}\partial_ju$,
$\Xi_{rj}\partial_iu$, $\Xi_{rr}\partial_iu\partial_ju$, and the ambient
connection applied to the first derivatives; all are smooth in the indicated
variables. Taking the inner product with \eqref{eq:graph-normal} gives the
first formula in \eqref{eq:graph-trace-jets}, since
$\ip{\nu_{\parallel}}{\nu_u}=1/W_u$. Tracing with $g_u^{-1}$ gives the
second formula. Background coordinate-connection terms are included in
$B_{ij}$ and $b$.

For the principal part, write $a^{ab}=g_u^{ab}/W_u$, so
$\tau_u=a^{ab}(x,u,\partial u)\partial_{ab}u+b(x,u,\partial u)$.
The coefficient $a^{ab}$ uses only the first graph jet. Hence
\[
 \partial_{ij}\tau_u
   =a^{ab}\partial_{ijab}u+\mathscr B_{ij}(x,j^3u).
\]
The Christoffel symbols of $g_u$ use at most the second graph jet. Consequently
\[
 \Delta_u\tau_u=g_u^{ij}a^{ab}\partial_{ijab}u
                       +\mathscr B(x,j^3u).
\]
Now use
\[
 \Delta_u\Phi(\tau_u)
 =\Phi'(\tau_u)\Delta_u\tau_u
   +\Phi''(\tau_u)|\nabla_u\tau_u|_{g_u}^2.
\]
The last term uses at most third derivatives, $S_u$ uses at most second
derivatives, and the pulled-back tangential endomorphism $A_u$ is smooth in
the second jet. This proves \eqref{eq:graph-fourth-principal}. Notice that its
fourth-order coefficient may itself depend on $j^2u$ through $\tau_u$;
we do not claim that it depends only on the first jet.

Finally $Q(0)=\Id$, and compactness gives uniform invertibility for small
$\|u\|_{\infty}$. The exact metric formula gives uniform equivalence for
small $\|u\|_{C^1}$. The trace formula and the ordinary mean-value estimate
in its finite-dimensional jet variables give
$\|\tau_u-\tau_0\|_\infty\le C\|u\|_{C^2}$.
Choose this difference smaller than $|\tau_0|/2$ and, if necessary, smaller
still so that $|\tau_u|$ remains in a fixed compact subinterval of the given
response-law domain. The sign of $\tau_u$ is then the sign of $\tau_0$.
All inverses and denominators above are uniformly controlled. On relatively
compact coordinate charts and a fixed small closed jet ball, smoothness
therefore supplies a uniform bound for the second jet derivatives of the
defects. A finite atlas makes the bound uniform on $M_0$.
\end{proof}

\begin{lemma}[Fourth-order nonlinear remainder]\label{lem:remainder}
For all sufficiently small smooth $C^4$ normal graphs over either fixed model, one has
\begin{equation}\label{eq:remainder}
 \norm{\mathcal F(u)-\frac{\Phi(\tau_0)}{\tau_0}L u}_{L^2\oplus L^2}
 \le C_1\norm u_{C^4}\norm u_{H^4}.
\end{equation}
The constants use only the fixed model and the response law near its nonzero amplitude.
\end{lemma}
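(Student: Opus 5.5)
The plan is a second-order Taylor expansion of the two defects in the finite-dimensional jet variables, with the pointwise quadratic error split as ``sup-norm times $L^2$-norm''. By Lemma~\ref{lem:graph-jets}, on a fixed small fourth-jet neighborhood and in each chart of the fixed finite atlas, $\mathcal F=(\mathcal F_N,\mathcal F_T)$ is a smooth function $\mathcal G(x,\xi)$ of $x$ and the jet variable $\xi=j^4u$. Its second jet derivatives are uniformly bounded there. Vector defects are expressed in the fixed coordinate frame of $TM_0$ through the identification by $dX_u$ described in Section~\ref{sec:local}. Smallness of $\|u\|_{C^4}$ keeps the whole segment $t\,j^4u$, $0\le t\le1$, inside this neighborhood, with $|\tau_{tu}|\ge|\tau_0|/2$ by \eqref{eq:graph-uniform-domain}. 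The steps are as follows.

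\emph{Step 1: zero and first-order terms.} Since $M_0$ solves \eqref{eq:spectral-main}, $\mathcal G(x,0)=0$. The jet-linear part $D_\xi\mathcal G(x,0)[j^4u]$ is exactly the Gateaux derivative $\frac{\dd}{\dd t}\mathcal F(tu)|_{t=0}$. This derivative was computed in Section~\ref{sec:local} as $D\mathcal F_N(0)u=\Phi_0'\Delta\psi-\Phi_0\dot S$ and $D\mathcal F_T(0)u=(2\Phi_0'A_0+\Phi_0\Id)\nabla\psi$, which by \eqref{eq:linear-general} equals $(\Phi(\tau_0)/\tau_0)Lu$.

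I will check carefully that the variation formulas \eqref{eq:variation} concern the same normal-graph family $X_{tu}=\exp(tu\nu_0)$, with the same pullback identification of tangent vectors. I will also check that the identification contributes nothing at first order. It does not, because $\nabla\tau_0=0$ kills the variation of the frame in $\mathcal F_T$, as already noted in that section.

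\emph{Step 2: pointwise quadratic bound.} Taylor's formula with integral remainder gives, pointwise,
\[
 \mathcal F(u)-\frac{\Phi(\tau_0)}{\tau_0}Lu
 =\int_0^1(1-t)\,D_\xi^2\mathcal G(x,t\,j^4u)[j^4u,j^4u]\dd t .
\]
The uniform bound on second jet derivatives then gives a pointwise estimate
\[
 \Bigl|\mathcal F(u)-\frac{\Phi(\tau_0)}{\tau_0}Lu\Bigr|\le C|j^4u|^2\le C\|u\|_{C^4}\,|j^4u|.
\]
Squaring, integrating, and summing over the finite atlas gives an $L^2$ bound of $C\|u\|_{C^4}\bigl(\sum_{k\le4}\|\nabla^ku\|_2^2\bigr)^{1/2}$. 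Passing between coordinate and covariant derivatives costs only fixed constants.

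\emph{Step 3: change of norm.} The comparison of Lemma~\ref{lem:rotation-Sobolev} turns the geometric Sobolev norm into the spectral $\|u\|_{H^4}$, which yields \eqref{eq:remainder}. The constants depend only on the model, on the chosen jet ball, and on $F$ and its derivatives through second order near $|\tau_0|$. For the power-law uniformity these are continuous in $p$.

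I expect the main obstacle to be Step 1, not the estimate itself. One must verify that the jet-space derivative of the pulled-back defects matches the geometric linearization exactly, including the factor $\Phi_0/\tau_0$ and the fact that the identification of tangent spaces via $dX_u$ drops out. The remainder estimate is then routine.
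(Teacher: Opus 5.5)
Your proposal matches the paper's proof: Taylor's formula with integral remainder in the jet variables supplied by Lemma~\ref{lem:graph-jets}, the pointwise bound $C|j^4u|^2$ split as $\|j^4u\|_{L^\infty}\|j^4u\|_{L^2}$, and Lemma~\ref{lem:rotation-Sobolev} to pass to the spectral $H^4$ norm, using $\mathcal F(0)=0$ and $D\mathcal F(0)=(\Phi(\tau_0)/\tau_0)L$ from Section~\ref{sec:local}. One small correction to your remark on constants: $\mathcal F_N$ already contains $\Phi''(\tau_u)$ and $\tau_u$ depends on $j^2u$, so the second jet derivatives of the defects involve $\Phi'''$ and $\Phi''''$, and the constants depend on $F$ through fourth order near $|\tau_0|$, not second order; this is harmless because $F$ is smooth.
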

\begin{proof}
Lemma~\ref{lem:graph-jets} gives explicit formulas for all geometric quantities,
ensures immersion and separation from zero mean curvature, and represents the
defects as smooth functions $\mathscr P(x,j^4u)$ on a fixed small jet
neighborhood. It also supplies a uniform bound for the second derivatives of
$\mathscr P$ with respect to the jet variables. Set $a_0=\Phi(\tau_0)/\tau_0$. For a jet vector $\eta=j^4u(x)$, Taylor's formula in the finite-dimensional jet variables is
\[
 \mathscr P(x,\eta)-\mathscr P(x,0)-D_\eta\mathscr P(x,0)\eta
 =\int_0^1(1-t)D_\eta^2\mathscr P(x,t\eta)[\eta,\eta]\dd t.
\]
The integrand is uniformly bounded by $C|\eta|^2$. Since $\mathcal F(0)=0$ and $D\mathcal F(0)=a_0L$, this gives the pointwise remainder estimate $|\mathcal F(u)-a_0Lu|\le C|j^4u|^2$. Integrating in the fixed background metric and using Lemma~\ref{lem:rotation-Sobolev},
\[
 \big\||j^4u|^2\big\|_{L^2}
 \le\|j^4u\|_{L^\infty}\|j^4u\|_{L^2}
 \le C\|u\|_{C^4}\|u\|_{H^4}.
\]
A finite atlas and a partition of unity combine these local estimates
into the stated global estimate.
For example, the quadratic term $\Phi''(\tau_u)|\nabla_u\tau_u|^2$ contains products of third graph derivatives; one factor is controlled in $L^\infty$ and the other in $L^2$. The independent $C^4$ smallness, not an embedding $H^4\hookrightarrow C^4$, supplies the required pointwise bound in every dimension.
\end{proof}

\begin{proof}[Proof of Theorem~\ref{thm:local}]
Use Lemma~\ref{lem:slice} to obtain $v\perp\KK$. Put $a_0=\Phi(\tau_0)/\tau_0>0$, and write $\mathcal H=L^2(M_0)\oplus L^2(TM_0)$ for the scalar--vector Hilbert sum with the fixed background metric. Enlarging the constant in \eqref{eq:coercivity} if necessary, let $C_L$ satisfy $\|v\|_{H^4}\le C_L\|Lv\|_{\mathcal H}$ for $v\perp\KK$. If $C_R$ is a remainder constant in \eqref{eq:remainder}, then
\[
 \|v\|_{H^4}
 \le \frac{C_L}{a_0}\|\mathcal F(v)\|_{\mathcal H}
   +\frac{C_L C_R}{a_0}\|v\|_{C^4}\|v\|_{H^4}.
\]
Choose $\|v\|_{C^4}\le a_0/(2C_LC_R)$, taking $C_R>0$. The slice estimate ensures this after reducing the original graph neighborhood. Absorption gives
\[
 \|v\|_{H^4}\le \frac{2C_L}{a_0}\|\mathcal F(v)\|_{\mathcal H}
 \le \frac{2C_L}{a_0}\bigl(\|\mathcal F_N(v)\|_{L^2}
                         +\|\mathcal F_T(v)\|_{L^2}\bigr).
\]
This proves \eqref{eq:quantitative-main}. Since the isometry preserves the equation, an exact solution gives $\mathcal F(v)=0$ and hence $v=0$. For parameter uniformity, let $F(t)=t^{p-1}$ and fix
$I=[p_-,p_+]\Subset(1,\infty)$. Set $r_-=p_--1>0$.
Uniformly for $p\in I$, the normal multiplier has the form
\[
 N_p=(p-1)E^2+R_p,\qquad |R_p|\le C_I(1+E).
\]
Choose one $E_*\ge1$ such that
$C_I(1+E)\le r_-E^2/2$ for $E\ge E_*$. Then the joint squared weight
satisfies $W_p\ge r_-^2E^4/4$ for every such $E$ and every $p\in I$.
Let $\mathscr M_*$ be the finite collection of joint modes with $E<E_*$
outside the fixed Killing kernel. The kernel computations, including
Lemma~\ref{lem:arithmetic} at the exceptional Clifford parameters, give
$W_m(p)>0$ for every $m\in\mathscr M_*$ and every $p\in I$. Compactness
and continuity therefore imply
\[
 \min_{\substack{p\in I\\m\in\mathscr M_*}}
       \frac{W_m(p)}{(1+E_m)^4}>0
\]
when $\mathscr M_*$ is nonempty; when it is empty, only the high-frequency
bound is needed. Continuity alone would not give strict positivity without
the all-parameter kernel calculation. Thus $C_L$ is uniform. Also
$\inf_{p\in I}a_0(p)=\inf_{p\in I}|\tau_0|^{p-2}>0$.
Choose a fixed compact amplitude interval about $|\tau_0|$ away from zero.
The finitely many derivatives of $t^{p-1}$ needed in the jet remainder
are uniformly bounded on the product of that interval with $I$.
Lemma~\ref{lem:graph-jets} then gives a uniform remainder constant, while
the slice depends only on the fixed model. The absorption inequality therefore
uses one common $C^4$ neighborhood and one common final constant.
\end{proof}

\section{Scope and limitations}\label{sec:scope}
\subsection{Geometric scope}
The gradient-line reduction starts from a nonzero-mean-curvature region on which $d\tau\ne0$. After restriction to a constant-multiplicity neighborhood, that region supplies an open interval of the necessary ODE system. Analytic continuation concerns the state of this system; it need not produce an extended immersion. A nonzero CMC region is treated separately by $|A|^2=\Lambda$. These distinctions explain why the nonexistence theorem requires neither completeness of the hypersurface nor a global principal-coordinate net.

Constant sectional curvature enters both the tangential equation and Codazzi. For a general ambient manifold, the ordinary tangential equation is
\[
 (2A+\tau\Id)\nabla\tau
   =2\tau(\Ric^N(\nu))^\top,
\]
and mixed ambient curvature also occurs in Codazzi. A sign condition on sectional curvature alone does not remove those terms. The counterexamples of Ou--Tang \cite{OT} preclude the corresponding unrestricted nonpositively curved extension. Higher codimension introduces a varying mean-curvature direction in the normal bundle, beyond the single-shape-operator reduction used here.

The nonexistence argument is local, so it applies in space-form quotients and ambient open subsets. For a smooth hypersurface with boundary, the argument applies in the interior. Minimality and non-strict bounds extend to the boundary by continuity; the strict gradient-point statements are asserted only at interior points. The graph theorem is formulated instead on an entire compact model in a simply connected space form. In particular, the hyperbolic part of the nonlinear spectral theory is not restricted to the ordinary biharmonic law or to $\sigma=0$, while its compact-model rigidity statement is restricted to geodesic spheres with $\sigma<-2na^2$.

\subsection{Response laws and variational conventions}
The hypotheses of Definition~\ref{def:admissible} serve distinct purposes. The strict inequality $R_F-tR_F'>0$ makes $k_F$ invertible, real analyticity gives persistence of the normal constraint, and the elasticity bounds give the two-sided trace comparison used at the endpoint. Thus positivity of $F$ and $F'$ alone is not the hypothesis of the nonexistence theorem. The compact-model theorem uses only local smoothness and $F,F'>0$ near $|\tau_0|>0$; it has an independent spectral and nonlinear proof.

The power functional is based on the ordinary tension field: in the convention $E_{q,r}=r^{-1}\int|\tau_q|^r$ of \cite{TCL}, it is $E_{2,p}$. Changing the tension exponent changes the equation. Likewise, a prescribed weight on the domain gives a different problem from a constitutive response depending on the mean curvature. When deriving the map Euler equation the domain metric is fixed, whereas each immersion tested against the resulting equation carries its own induced metric.

\subsection{Thresholds and compact-model neighborhoods}
Proposition~\ref{prop:sharp-boundary} establishes sharpness only of the unconditional spectral minimality range. The mean-curvature bounds in Theorems~\ref{thm:power} and \ref{thm:nonlinear} are sufficient one-point CMC criteria; they do not establish optimal curvature thresholds, imply that every spherical or subcritical hyperbolic solution is CMC, or give a global classification. The mixture-width condition is sufficient, and no necessity is claimed.

Theorem~\ref{thm:local} concerns small global normal graphs over the stated nonminimal geodesic spheres and equal-radius Clifford models, modulo ambient isometries. The independent $C^4$ smallness assumption is essential to the remainder estimate. Minimal Clifford products are outside its hypotheses because $\tau_0=0$; arbitrary unequal radii, large graphs, local patches and general quotient models are also outside the stated conclusion. Hyperbolic horospheres and equidistant hypersurfaces, although used as explicit spectral examples, are noncompact and are not covered by the compact harmonic decomposition or the resulting graph estimate. No fixed-domain stability or index interpretation of the induced-metric joint linearization is used.

\appendix

\section{Elementary analytic tools and a constraint warning}\label{app:quadratic}
\subsection{The contraction principle}
\begin{standard}[Banach contraction principle]\label{std:contraction}
Let $(\mathcal X,d_{\mathcal X})$ be a nonempty complete metric space and let $P:\mathcal X\to\mathcal X$ satisfy
$d_{\mathcal X}(Px,Py)\le q\,d_{\mathcal X}(x,y)$ for a constant $0\le q<1$.
The iteration $x_{m+1}=Px_m$ converges, from any initial $x_0$, to the unique fixed point $x_*$. Moreover,
\[
 d_{\mathcal X}(x_m,x_*)\le\frac{q^m}{1-q}d_{\mathcal X}(x_1,x_0).
\]
\end{standard}
\begin{proof}
Successive distances are bounded by $q^m d_{\mathcal X}(x_1,x_0)$. Summing their geometric tail makes $(x_m)$ Cauchy and proves the displayed estimate. Completeness supplies $x_*$, and continuity of $P$ gives $Px_*=x_*$. Two fixed points have distance at most $q$ times their distance, so they coincide. This is the usual contraction argument; compare \cite[Theorem~2.1]{Teschl}.
\end{proof}

\subsection{Proof of the analytic continuation lemma}\label{app:odeproof}
\begin{proof}[Proof of Lemma~\ref{lem:analytic}]
\emph{Existence and uniqueness.}
On a closed state ball inside $\Omega$, let $M$ bound $|\mathcal V|$ and $L$ bound $\|D\mathcal V\|$. Choose a time length $h$ with $hM$ at most the ball radius and $hL<1$. The map
\[
 z\longmapsto z_0+\int_0^t\mathcal V(z(u))\dd u
\]
is a contraction of the corresponding closed ball in the complete space of continuous paths with the uniform norm. The path ball is closed in that complete space, and the integral map both preserves it and has Lipschitz constant at most $hL<1$. Standard theorem~\ref{std:contraction} gives its unique fixed point; differentiating the integral equation gives the ODE. Uniqueness on overlaps glues local solutions to a unique maximal interval. The same argument proves the nonautonomous uniqueness used in Lemma~\ref{lem:matrix} whenever the right side is continuous in time and locally uniformly Lipschitz in the state.

\emph{Analyticity.}
Use a real coefficient majorant. After translation write
\[
 \mathcal V_i(z_0+w)=\sum_{\alpha\in\mathbb N^m}c_{i,\alpha}w^\alpha,
 \qquad
 \sum_{|\alpha|=q}|c_{i,\alpha}|\le C R^{-q}\quad(q\ge0).
\]
Such $C,R>0$ exist by absolute convergence on a smaller polydisc. The coefficients of the formal solution $w(t)=\sum_{j\ge1}a_jt^j$ are uniquely determined by
\[
 (j+1)a_{j+1,i}=[t^j]\mathcal V_i(z_0+w(t)).
\]
Compare, coefficient by coefficient, with the nonnegative series
\[
 W'=\frac{C}{1-W/R},\quad W(0)=0,
 \qquad W(t)=R\left(1-\sqrt{1-2Ct/R}\right).
\]
The first coefficient is dominated. If the comparison holds through degree $j$, then every product $w^\alpha$ through degree $j$ is dominated by $W^{|\alpha|}$ after absolute values; summing the coefficient bound gives domination of degree $j+1$. Thus every component of $w$ converges absolutely for $|t|<R/(2C)$. Termwise differentiation and multiplication are valid on smaller intervals, and the sum solves the ODE. Uniqueness identifies it with the real solution. Repeating at each interior time proves analyticity.

\emph{Persistence of an identity.}
Write $f(s)$ for the analytic scalar function along the trajectory and $I$ for its connected maximal time interval. Define
\[
 Z=\{s\in I:f\text{ vanishes identically on a neighborhood of }s\}.
\]
The initial open interval makes $Z$ nonempty, and $Z$ is open by definition. If $s_j\in Z$ tends to $s\in I$, then $f^{(m)}(s_j)=0$ for every $m\ge0$. Continuity of each derivative gives $f^{(m)}(s)=0$ for every $m$. Analyticity at $s$ then makes $f$ zero on a neighborhood of $s$, so $Z$ is also closed in $I$. Connectedness gives $Z=I$. The hypothesis is identical vanishing on an open time interval.

\emph{Continuation.}
If the state stays in a compact subset at a finite endpoint, its velocity is bounded. The trajectory is Lipschitz and has a limiting state still in $\Omega$. Local existence at that state and uniqueness extend the solution. These arguments are the usual basic continuation principles; see also \cite{Teschl}.
\end{proof}
\subsection{A real coefficient proof of quadratic ODE analyticity}
For the power-law state field, analyticity can be seen without a complex-time argument. At an arbitrary state write
\[
 \mathcal V(z_0+w)=b+Lw+B(w,w),
\]
where $B$ is a bounded bilinear map in the finite-dimensional state norm. Choose
\[
 A\ge\max\{1,|b|,\|L\|/2,\|B\|\}.
\]
The formal series $w(t)=\sum_{m\ge1}w_mt^m$ is determined recursively by
\[
 w_1=b,\qquad
 (m+1)w_{m+1}=Lw_m+\sum_{i=1}^{m-1}B(w_i,w_{m-i}).
\]
If $|w_i|\le A^i$ through $i=m$, the right side has norm at most
$2A^{m+1}+(m-1)A^{m+1}=(m+1)A^{m+1}$. Induction gives $|w_m|\le A^m$ for every $m$. The series and its derivative converge on each compact subinterval of $|t|<A^{-1}$; substitution is justified by absolute convergence. It solves the original ODE and agrees with its unique real solution. This proves analyticity for every interior initial state of the quadratic system. The more general analytic law uses Lemma~\ref{lem:analytic}.

\subsection{A single-state constraint counterexample}
In the ordinary Euclidean case, set $d=5$, $k=-\frac13\sum_i\lambda_i$, and
\[
 (\lambda_1,\ldots,\lambda_5)=(-3,0,0,0,0),\qquad
 (\mu_1,\ldots,\mu_5)=(-3\sqrt{14}/4,0,0,0,0).
\]
Let $\mathcal E=D^2k-\mathsf TDk-k(k^2+\mathsf Q)$. The subspace with $\lambda_i=\mu_i=0$ for $i\ge2$ is invariant under the pair equations when $c=0$. On it, write $L=\lambda_1$ and $M=\mu_1$. Then
\[
 k=-L/3,\qquad DL=\tfrac43LM,\qquad DM=M^2-L^2/3,
\]
so differentiation gives the polynomial identities
\[
 \mathcal E=\frac{2L(7L^2-8M^2)}{27},\qquad
 D\mathcal E=\frac{40LM(5L^2-4M^2)}{81}.
\]
At the displayed initial state these yield
\[
 k=1,\qquad \mathcal E=0,\qquad D\mathcal E=15\sqrt{14}\ne0.
\]
Thus a single initial zero of the normal residual is insufficient. Throughout the paper the original open time interval supplies the analytic identity. Equation \eqref{eq:power-residual} is $k\mathcal E$ when $r=1$, so the same example also warns against a single-state use of that equation.

\section*{Acknowledgment of AI assistance}
The author used ChatGPT in this work. The author takes full responsibility for the content of the paper.

\end{document}